\title{Projection method for quasiperiodic elliptic equations and application to quasiperiodic homogenization
\thanks{Submitted to xxx.
}
}
\author{Kai Jiang\thanks{Hunan Key Laboratory for Computation and Simulation in Science and Engineering, Key Laboratory of Intelligent Computing and Information Processing of Ministry of Education, School of Mathematics and Computational Science, Xiangtan University, Xiangtan, Hunan, 411105, China
  (\email{kaijiang@xtu.edu.cn, limeng@smail.xtu.edu.cn, zhangjuan@xtu.edu.cn}).}
\and Meng Li\footnotemark[2]
\and Juan Zhang\footnotemark[2]
\and Lei Zhang\thanks{School of Mathematical Sciences, Institute of Natural Sciences, MOE-LSC, Shanghai Jiao Tong University, Shanghai, 200240, China
  (\email{lzhang2012@sjtu.edu.cn}).}}
\newcommand{\bbR}{\mathbb{R}}
\newtheorem{thm}{Theorem}[section]
\newtheorem{remark}[thm]{Remark}
\newcommand\tbbint{{-\mkern -16mu\int}}
\newcommand\dbbint{{-\mkern -19mu\int}}
\newcommand\bbint{
	{\mathchoice{\dbbint}{\tbbint}{\tbbint}{\tbbint}}
}
\DeclareMathOperator*{\argmin}{\mathrm{argmin}}
\crefname{hypothesis}{Hypothesis}{Hypotheses}
\newcommand*{\addFileDependency}[1]{
  \typeout{(#1)}
  \@addtofilelist{#1}
  \IfFileExists{#1}{}{\typeout{No file #1.}}
}
\begin{document}
\maketitle
     
\begin{abstract}
In this study, we address the challenge of solving elliptic equations with quasiperiodic coefficients. To achieve accurate and efficient computation, we introduce the projection method, which enables the embedding of quasiperiodic systems into higher-dimensional periodic systems. To enhance the computational efficiency, we propose a compressed storage strategy for the stiffness matrix by its multi-level block circulant structure, significantly reducing memory requirements. Furthermore, we design a diagonal preconditioner  to efficiently solve the resulting high-dimensional linear system by reducing the condition number of the stiffness matrix. These techniques collectively contribute to the computational effectiveness of our proposed approach. Convergence analysis shows the polynomial accuracy of the proposed method. We demonstrate the effectiveness and accuracy of our approach through a series of numerical examples. Moreover, we apply our method to achieve a highly accurate computation of the homogenized coefficients for a quasiperiodic multiscale elliptic equation. 
\end{abstract}
 
\begin{keywords}
	Quasiperiodic elliptic equation, Projection method, Convergence analysis, Compressed storage, Diagonal preconditioner, Quasiperiodic homogenization.
\end{keywords}
	
\begin{AMS}
	65D05, 65D15, 65F08, 35B15, 35B27
\end{AMS}
	
\section{Introduction}
\label{sec:introduction}

Quasiperiodic systems, such as quasicrystals and Penrose tilings \cite{levine1984quasicrystals, penrose1974role, shechtman1984metallic, senechal1996quasicrystalsn}, share certain similarities with periodic systems like crystals and periodic tilings, particularly in terms of exhibiting long-range order. Nevertheless,  the absence of translational symmetry in quasiperiodic systems, fundamentally distinct from periodic structures, poses challenges in  precisely measuring the effective behavior within a representative volume.


Partial differential equations (PDEs) with quasiperiodic coefficients play a crucial role in describing intriguing phenomena in physics and materials science, such as wave  propagation in quasiperiodic media, Moiré lattices, magic-angle graphene superlattices and irrational interfaces \cite{amenoagbadji2023wave, amenoagbadji2023wave2, blanc2015local, cao2018unconventional, poincare1890problem, shechtman1984metallic, sutton1992irrational}. Research in this area provides essential insights into complex systems. However, the absence of translation invariance and decay in quasiperiodic media necessitates formulating the PDE over the entire space, posing significant numerical challenges. Particularly, in homogenization theory, when microstructures are quasiperiodic, asymptotic homogenization techniques for periodic media \cite{bensoussan2011asymptotic} are extended to quasiperiodic contexts \cite{braides1992homogenization, bouchitte2010homogenization, kozlov1979averaging, nguetseng2003homogenization, wellander2019homogenization, zhikov1987averaging}. This extension requires a limit over the entire space, making direct evaluation difficult.

Addressing these challenges and developing effective numerical methods for PDEs with quasiperiodic coefficients is an active area of research, aiming to enable accurate simulations and computations in quasiperiodic media. Over the years, several numerical approaches have been explored to tackle this problem. One of the most commonly used methods is the periodic approximation method (PAM), which involves approximating quasiperiodic functions with periodic ones,  thereby imposing  periodic boundary conditions for quasiperiodic PDEs.  However, the accuracy of PAM is primarily affected by the Diophantine error \cite{jiang2023approximation}, which measures the distance between a given real number and its approximate rational number. In the quasiperiodic corrector problem, the filtering method (FM) approximates whole-space quasiperiodic elliptic PDEs by restricting them to a finite computational domain, using a  filter function whose derivatives vanish to all orders at the boundary. This construction naturally induces modified homogeneous boundary conditions. From a numerical performance perspective, while this method suppresses oscillations, it still fails to eliminate the influence of  Diophantine errors \cite{blanc2010improving}.




In recent years, the projection method (PM) \cite{jiang2014numerical, jiang2018numerical} has emerged as a highly accurate and efficient approach for approximating quasiperiodic functions. The PM technique captures the essential characteristics of quasiperiodic functions that can be defined on an irrational manifold of a higher-dimensional torus, allowing for their embedding into higher-dimensional periodic functions. Consequently, PM utilizes the spectral collocation method by employing the discrete Fourier-Bohr transformation to approximate quasiperiodic functions. By efficiently computing higher-dimensional periodic parent functions and incorporating a projection matrix, accurate approximations of quasiperiodic functions can be obtained. Extensive research has confirmed the high accuracy of the PM in computing a diverse range of quasiperiodic systems, including quasicrystals \cite{barkan2014controlled, jiang2015stability, si2025designing}, incommensurate quantum systems \cite{gao2023pythagoras, xueyang2021numerical}, topological insulators \cite{wang2022effective}, and grain boundaries \cite{cao2021computing, jiang2022tilt}. Additionally, PM has been proven to possess high accuracy within the quasiperiodic function space and its ability to use the fast Fourier transform (FFT) algorithm to reduce computational complexity \cite{jiang2024numerical}.

In this study, we introduce a highly efficient algorithm for solving the quasiperiodic elliptic equation within the framework of the PM, accompanied by a thorough convergence analysis. The PM  transforms the quasiperiodic elliptic equation from a lower-dimensional problem to a periodic system in a higher dimension, enabling its solution using a pseudo-spectral method.    However, this transformation leads to a large linear system with high-dimensional degrees of freedom, posing computational challenges. To tackle the difficulties associated with the large linear system, we introduce a fast algorithm that utilizes a compressed storage format and a diagonal preconditioner. By employing the compressed storage format, we can effectively store and manipulate the tensor product matrices involved in the linear system. This approach enables efficient handling of the memory-intensive matrices arising from the quasiperiodic problem. Furthermore, the diagonal preconditioner is specifically designed to accelerate matrix-vector multiplications, enhancing the overall efficiency of solving the linear system. Through the combination of the compressed storage format and the diagonal preconditioner, our proposed algorithm significantly improves the computational efficiency of solving the quasiperiodic elliptic equation within the PM framework. 

To assess the effectiveness of our numerical algorithm and verify our theoretical results, we conduct comprehensive tests on various quasiperiodic elliptic equations.  These results demonstrate the remarkable efficiency and polynomial accuracy of our method, emphasizing its ability to avoid the Diophantine approximation error \cite{jiang2023approximation}.  Moreover, we extend the application of our algorithm to compute the homogenized coefficients in a quasiperiodic multiscale problem. By accurately solving the global quasiperiodic corrector equation, our method achieves polynomial accuracy in computing the homogenized coefficients. These outcomes validate the robustness and reliability of our approach in tackling quasiperiodic problems and highlight its potential for a wide range of applications.


\textbf{Organization.} The article is structured as follows. In  \Cref{sec:preliminaries}, we overview the preliminaries of quasiperiodic function spaces and the quasiperiodic elliptic equation. Moreover, we provide a detailed introduction to the PM. In \Cref{sec:numerical_methods}, we apply the PM to discretize quasiperiodic elliptic PDEs and provide a rigorous
 convergence analysis of the proposed method. \Cref{sec:compressed_storage} presents a compressed storage method for the stiffness matrix, which significantly reduces storage requirements by utilizing the multilevel block circulant structure, as well as a diagonal preconditioner for the preconditioned conjugate gradient (PCG) method to accelerate the computation of the discrete system. In  \Cref{sec:numerical_experiments}, we present a series of numerical experiments to validate the accuracy and efficiency of our developed algorithm. These experiments include solving quasiperiodic elliptic equations and computing quasiperiodic homogenized coefficients. Finally, the conclusion and outlook of this paper are given
in \Cref{sec:conclusion}.

\textbf{Notations.} We adopt the following notations throughout the paper. For any two numbers $a$ and $b$, $a\lesssim b$ implies the existence of a constant $C>0$ such that $a\leq Cb$. Similarly, $a \gtrsim b$ can also be defined. 
The Hadamard product $(A\circ B)_{ij}=a_{ij}b_{ij}$ represents the element-wise multiplication, denoted as $\circ$, of two matrices $A=(a_{ij})$, $B=(b_{ij})$ $\in$ $\mathbb{C}^{n\times n}$. 
$\|A\|_F:=\big(\sum_{i=1}^n\sum_{j=1}^n|a_{ij}|^2\big)^{1/2}$ is the Frobenius norm of a matrix $A=(a_{ij})$ $\in$ $\mathbb{C}^{n\times n}$. For a given number $L$, $[L]$ denotes the nearest integer to $L$.  The set $\mathscr{D}(n)$ encompasses  all diagonal matrices of order $n$. $I_n$ represents the identity matrix of order $n$. For a given matrix $\bm{P}=(p_{ij})\in\mathbb{C}^{d\times n}$, its uniform norm $\|\bm{P}\|_{\infty}$ is defined as $\|\bm{P}\|_{\infty}=\max\limits_{1\leq j\leq n}(\sum_{i=1}^d|p_{ij}|)$.  $(\bm{e}_i)_{i=1}^n$ is the canonical basis of $\mathbb{R}^n$.


\section{Preliminaries}\label{sec:preliminaries}

In this section, we present a brief overview of \emph{quasiperiodic functions} and the concept of \emph{quasiperiodic Sobolev spaces} in \Cref{sec:quasiperiodic_elliptic:function}. This preliminary knowledge is essential for establishing the well-posedness of the \emph{quasiperiodic elliptic equation} introduced in \Cref{sec:quasiperiodic_elliptic:equation}. Moveover, we introduce the projection method (PM) in detail in \Cref{sec:numerical_methods:pm}.

\subsection{Quasiperiodic function space}
\label{sec:quasiperiodic_elliptic:function}

We start with an $n$-dimensional periodic function $\mathcal{F}(\bm{y})$, $\bm{y}\in\mathbb{T}^n = (\mathbb{R}/2\pi\mathbb{Z})^n$. The Fourier series of $\mathcal{F}(\bm{y})$ defined on $\mathbb{T}^n$ is 
\begin{equation}\label{eqn:fourier_periodic}
 \mathcal{F}(\bm{y})=\sum_{\bm{k}\in\mathbb{Z}^{n}}\hat{\mathcal{F}}_{\bm{k}}e^{\imath\bm{k}^T\bm{y}},\quad\hat{\mathcal{F}}_{\bm{k}}=\frac{1}{|\mathbb{T}^n|}\int_{\mathbb{T}^n}\mathcal{F}(\bm{y})e^{-\imath\bm{k}^T\bm{y}}d\bm{y},\quad\bm{k}\in\mathbb{Z}^n.
\end{equation}
Next, we introduce the definition and properties of quasiperiodic functions.

\begin{definition}
    A matrix $\bm{P}\in\mathbb{R}^{d\times n} (d\leq n)$ is called the projection matrix, if it belongs to the set $\mathbb{P}^{d\times n }$ defined as $\mathbb{P}^{d\times n}:=\left\{\bm{P}=(\bm{p}_1,\cdots,\bm{p}_n)\in\mathbb{R}^{d\times n}:\bm{p}_1,\cdots,\bm{p}_n ~\mathrm{are}~\mathbb{Q}\text{-linearly independent}\right\}$.
\end{definition}

\begin{definition}
A $d$-dimensional function $f(\bm{x})$ is quasiperiodic, if there exists an $n$ dimensional periodic function $\mathcal{F}$ and a projection matrix $\bm{P}\in\mathbb{P}^{d\times n}$, such that $f(\bm{x})=\mathcal{F}(\bm{P}^T\bm{x})$ for all $\bm{x}\in\mathbb{R}^d$. $\mathcal{F}$ is called the parent function of $f(\bm{x})$.  In particular, when $n = d$ and $\bm{P}$ is nonsingular, $f(\bm{x})$ is periodic. The space of all quasiperiodic functions is denoted as $\mathrm{QP}(\mathbb{R}^d)$.
\end{definition}

As an example, we present an illustration of the  quasiperiodic function $f=\cos(2\pi x)+\cos(2\pi\sqrt{2}x)$, whose projection matrix is $\bm{P} = 2\pi\cdot(1, \sqrt{2})$, as shown in \Cref{fig:1d_quasiperiodic}. 
\begin{figure}[H]
    \centering
        \includegraphics[width=0.75\textwidth]{./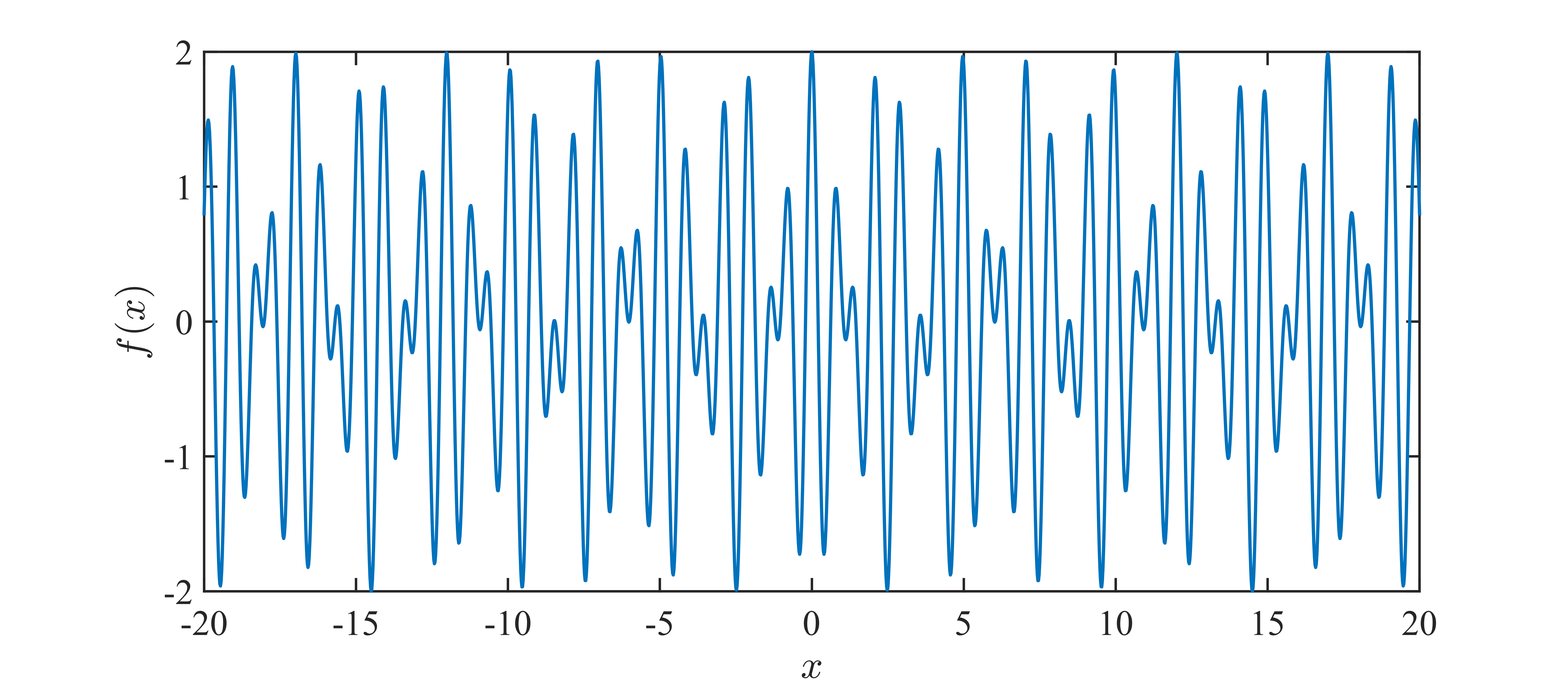}
        \label{fig:image3}
    \caption{{One-dimensional  quasiperiodic function $f=\cos(2\pi x)+\cos(2\pi\sqrt{2}x)$ }}
    \label{fig:1d_quasiperiodic}
\end{figure}

Let $K_T=\left\{\bm{x} = (x_1, \cdots , x_d)\in\mathbb{R}^d : |x_j|\leq T, ~j = 1, \cdots ,d\right\}$. The mean value of $f(\bm{x})\in\mbox{QP}(\mathbb{R}^d)$ is 
\begin{equation}\label{eqn:elliptic:meanvalue}
     \mathcal{M}\left\{ f(\bm{x})\right\} = \lim_{T\to \infty}\frac{1}{(2T)^{d}}\int_{\bm{s}+K_T} f(\bm{x})d\bm{x}:=\bbint f(\bm{x})d\bm{x},
\end{equation}
where the limit on the right side exists uniformly for all $\bm{s}\in\mathbb{R}^d$ \cite{jiang2024numerical}. Therefore, calculating the mean value, by definition, involves a limiting process for an integral defined over infinitely increasing domains, which poses a challenge for numerical algorithms.

Through a simple calculation
\begin{equation}\label{eqn:continue_orthogonality}	\mathcal{M}\Big\{e^{\imath\bm{\lambda}^T_1\bm{x}} e^{-\imath\bm{\lambda}^T_2\bm{x}}\Big\}=
		\left\{
		\begin{aligned}
			&1, ~~~~\bm{\lambda}_1=\bm{\lambda}_2, \\
		  &0,~~~~ \mathrm{otherwise},
		\end{aligned}    			
		\right.
	\end{equation}
and the continuous Fourier transformation 
 \begin{align}
     \hat{f}_{\bm{\lambda_k}}=\mathcal{M}\Big\{ f(\bm{x}) e^{-\imath\bm{\lambda_k}^T\bm{x}}\Big\},
     \label{eqn:continuous_FT}
 \end{align}
the quasiperiodic function $f(\bm{x})$ can be written as the following (generalized) Fourier series, 
\begin{align}\label{eqn:continuous_FS}
f(\bm{x})=\sum_{\bm{k}\in\mathbb{Z}^n}\hat{f}_{\bm{\lambda_k}} e^{\imath\bm{\lambda_k}^T\bm{x}},
\end{align}
where $\bm{\lambda_k}=\bm{P}\bm{k}\in\Lambda$ are Fourier exponents and $\hat{f}_{\bm{\lambda_k}}$ are Fourier coefficients. $\Lambda$ is the spectral point set of $f(\bm{x})$.
 
To clarify the relationship between the Fourier coefficient $\hat{f}_{\bm{\lambda_k}}$ of the low-dimensional quasi-periodic function and the Fourier coefficient $\hat{\mathcal{F}}_{\bm{k}}$ of the higher-dimensional parent function, we present the following \Cref{lemma:consistent_fourier_coefficient}, see also \cite[Theorem 4.1]{jiang2024numerical}, which describes their consistency by invoking the Birkhoff’s ergodic theorem \cite{walters2000introduction}.

\begin{lemma}\label{lemma:consistent_fourier_coefficient}
    For a given quasiperiodic function
$$
f(\bm{x})=\mathcal{F}(\bm{P}^T\bm{x}),~~\bm{x}\in\mathbb{R}^d,
$$
where $\mathcal{F}$ is its parent function defined on $\mathbb{T}^n$ and $\bm{P}$ is the projection matrix, we have
\begin{align}
\hat{f}_{\bm{\lambda_k}}=\hat{\mathcal{F}}_{\bm{k}}.
\end{align}
\end{lemma}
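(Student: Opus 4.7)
The plan is to reduce the assertion to the identification of the mean value $\mathcal{M}\{\cdot\}$ on $\mathrm{QP}(\mathbb{R}^d)$ with the normalized torus integral of the parent function, and then to read off the equality directly from the definition of $\hat{\mathcal{F}}_{\bm{k}}$ in \eqref{eqn:fourier_periodic}.

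First, I would observe that for any quasiperiodic function $g \in \mathrm{QP}(\mathbb{R}^d)$ with parent $\mathcal{G}$ on $\mathbb{T}^n$ and projection matrix $\bm{P}$, Birkhoff's ergodic theorem together with the Kronecker--Weyl equidistribution theorem yields
\begin{equation*}
\mathcal{M}\{g(\bm{x})\}
=\lim_{T\to\infty}\frac{1}{(2T)^d}\int_{\bm{s}+K_T}\mathcal{G}(\bm{P}^T\bm{x})\,d\bm{x}
=\frac{1}{(2\pi)^n}\int_{\mathbb{T}^n}\mathcal{G}(\bm{y})\,d\bm{y}.
\end{equation*}
The key ingredient here is that the $\mathbb{Q}$-linear independence of the columns of $\bm{P}$ makes the linear flow $\bm{x}\mapsto \bm{P}^T\bm{x}\pmod{2\pi\mathbb{Z}^n}$ ergodic on $\mathbb{T}^n$, so time averages along the affine subspace $\bm{P}^T\mathbb{R}^d$ (suitably Ces\`aro-summed over the cubes $\bm{s}+K_T$) coincide with the uniform spatial average, independent of $\bm{s}$.

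Next, I would apply this identity to the specific quasiperiodic function
\begin{equation*}
g(\bm{x})=f(\bm{x})\,e^{-\imath\bm{\lambda_k}^T\bm{x}}
=\mathcal{F}(\bm{P}^T\bm{x})\,e^{-\imath(\bm{P}\bm{k})^T\bm{x}}
=\mathcal{F}(\bm{P}^T\bm{x})\,e^{-\imath \bm{k}^T(\bm{P}^T\bm{x})},
\end{equation*}
whose parent function on $\mathbb{T}^n$ is $\mathcal{G}(\bm{y})=\mathcal{F}(\bm{y})\,e^{-\imath\bm{k}^T\bm{y}}$. Combining the definition \eqref{eqn:continuous_FT} of $\hat{f}_{\bm{\lambda_k}}$ with the mean-value identity above gives
\begin{equation*}
\hat{f}_{\bm{\lambda_k}}
=\mathcal{M}\{g(\bm{x})\}
=\frac{1}{(2\pi)^n}\int_{\mathbb{T}^n}\mathcal{F}(\bm{y})\,e^{-\imath\bm{k}^T\bm{y}}\,d\bm{y}
=\hat{\mathcal{F}}_{\bm{k}},
\end{equation*}
where the last equality is exactly \eqref{eqn:fourier_periodic}. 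This closes the argument.

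The main obstacle is the first step: justifying the passage from the $d$-dimensional Ces\`aro limit \eqref{eqn:elliptic:meanvalue} to the $n$-dimensional torus integral. One must verify that the uniform convergence in $\bm{s}$ claimed after \eqref{eqn:elliptic:meanvalue} indeed forces the limit to equal the torus average whenever the columns of $\bm{P}$ are $\mathbb{Q}$-linearly independent; this is the content of Weyl's equidistribution theorem applied to the subgroup $\bm{P}^T\mathbb{R}^d\subset\mathbb{T}^n$, and is precisely the place where the projection matrix hypothesis is used. Once this ergodic fact is in hand, the rest of the proof is a direct substitution, and the result extends by linearity and density from the single exponential $\mathcal{F}(\bm{y})=e^{\imath\bm{k}^T\bm{y}}$ to general $\mathcal{F}$, consistent with the series representation \eqref{eqn:continuous_FS}.
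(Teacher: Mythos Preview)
Your proposal is correct and aligns with the approach the paper itself indicates: the paper does not give a self-contained proof of this lemma but cites \cite[Theorem~4.1]{jiang2024numerical} and explicitly says the consistency is obtained ``by invoking the Birkhoff's ergodic theorem,'' which is precisely the ergodic/equidistribution step you carry out before applying it to $g(\bm{x})=f(\bm{x})e^{-\imath\bm{\lambda_k}^T\bm{x}}$ with parent $\mathcal{G}(\bm{y})=\mathcal{F}(\bm{y})e^{-\imath\bm{k}^T\bm{y}}$. Your identification of the crucial point---that the $\mathbb{Q}$-linear independence of the columns of $\bm{P}$ is exactly what forces the flow $\bm{x}\mapsto\bm{P}^T\bm{x}\pmod{2\pi\mathbb{Z}^n}$ to be ergodic---is the same mechanism underlying the cited result.
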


Correspondingly, we introduce the notions of periodic function spaces on $\mathbb{T}^n$ and quasiperiodic function spaces on $\mathbb{R}^d$.  Define the Hilbert space:  $\mathcal{L}^{2}(\mathbb{T}^{n})=\big\{\mathcal{F}(\bm{y}): \displaystyle\frac{1}{\left|\mathbb{T}^{n}\right|} \displaystyle\int_{\mathbb{T}^{n}}|\mathcal{F}|^{2}\, d \bm{y}<\infty\big\}$, equipped with inner product
$
\left(\mathcal{F}_{1}, \mathcal{F}_{2}\right)_{\mathcal{L}^{2}\left(\mathbb{T}^{n}\right)}=\displaystyle\frac{1}{\left|\mathbb{T}^{n}\right|} \displaystyle\int_{\mathbb{T}^{n}}\mathcal{F}_{1} \bar{\mathcal{F}}_{2}\, d \bm{y}.
$
The periodic Sobolev space
 $\mathcal{H}^{s}\left(\mathbb{T}^{n}\right)$ is defined as
$$
\mathcal{H}^{s}\left(\mathbb{T}^{n}\right)=\left\{\mathcal{F} \in \mathcal{L}^{2}\left(\mathbb{T}^{n}\right):\|\mathcal{F}\|_{\mathcal{H}^s(\mathbb{T}^n)}<\infty\right\},
$$
where  $\|\mathcal{F}\|_{\mathcal{H}^s(\mathbb{T}^n)}=\Big(\sum\limits_{\bm{k} \in \mathbb{Z}^{n}}\left(1+|\bm{k}|^{2}\right)^{s}|\hat{\mathcal{F}}_{\bm{k}}|^{2}\Big)^{1 / 2}, \hat{\mathcal{F}}_{\bm{k}}=(\mathcal{F}, e^{\imath \bm{k}^T\bm{y}})_{\mathcal{L}^{2}\left(\mathbb{T}^{n}\right)}$. The semi-norm of  $\mathcal{H}^{s}(\mathbb{T}^{n})$ is given by $|\mathcal{F}|_{\mathcal{H}^s(\mathbb{T}^n)}=\Big(\sum\limits_{\bm{k} \in \mathbb{Z}^{n}}|\bm{k}|^{2 s}|\hat{\mathcal{F}}_{\bm{k}}|^{2}\Big)^{1 / 2}$.

For $s\in\mathbb{N}$, we use $\mathcal{C}^{s}_{QP}(\mathbb{R}^d)$ to denote the space of quasiperiodic functions on $\mathbb{R}^d$ that have continuous derivatives up to order $s$. The $\mathcal{C}^s_{QP}(\mathbb{R}^d)$-norm of $f\in \mathcal{C}^\infty_{QP}(\mathbb{R}^d)$ is given by
\begin{equation*}
    \|f\|_{\mathcal{C}^s_{QP}(\mathbb{R}^d)}=\sum_{|p|\leq s}\sup_{\bm{x}\in\mathbb{R}^d}|\partial_{\bm{x}}^pf|.
\end{equation*}

Define the quasiperiodic Hilbert space: $\mathcal{L}_{QP}^2(\mathbb{R}^d) = \bigg\{f(\bm{x})\in\mathrm{QP}(\mathbb{R}^d):\displaystyle\bbint |f(\bm{x})|^2d\bm{x}<\infty\bigg\}$
equipped with the inner product and the norm
$$
    (f_1, f_2)_{\mathcal{L}^2_{QP}(\mathbb{R}^d)}=\bbint f_1(\bm{x})\bar{f}_2(\bm{x})d\bm{x},\quad \|f\|_{\mathcal{L}^2_{QP}(\mathbb{R}^d)}=
(f, f)_{\mathcal{L}^2_{QP}(\mathbb{R}^d)}^{1/2}.
$$

Then we have the Parseval's identity with respect to the $\mathcal{L}_{QP}^2(\mathbb{R}^d)$-norm,
\begin{align}\label{eqn:parseval}
    \|f\|_{\mathcal{L}^2_{QP}(\mathbb{R}^d)}^2 =  \mathcal{M}\left\{ |f(\bm{x})|^2\right\} = \sum_{\bm{k}\in\mathbb{Z}^n}|\hat{f}_{\bm{\lambda_k}}|^2.
\end{align}

Furthermore, for any $s\in\mathbb{N}^+,$ the $s$-derivative quasiperiodic Sobolev space $\mathcal{H}_{QP}^{s}(\mathbb{R}^d)$ is defined through the following inner product 
\begin{equation*}
    (f_1, f_2)_{\mathcal{H}_{QP}^{s}(\mathbb{R}^d)}=\sum_{|p|\leq s}(\partial_{\bm{x}}^p f_1,\partial_{\bm{x}}^p f_2)_{\mathcal{L}_{QP}^{2}(\mathbb{R}^d)},
\end{equation*}
and the corresponding norm
\begin{equation*}
    \|f\|^2_{\mathcal{H}_{QP}^{s}(\mathbb{R}^d)}=(f, f)_{\mathcal{H}_{QP}^{s}(\mathbb{R}^d)}^{1/2}=\sum_{\bm{\lambda}\in\Lambda}(1+|\bm{\lambda}|^2)^s|\hat{f}_{\bm{\lambda}}|^2.
\end{equation*}
The $\mathcal{H}^{s}_{QP}(\mathbb{R}^d)$ semi-norm is defined as
$$
|f|_{\mathcal{H}^{s}_{QP}(\mathbb{R}^d)}^2 = \sum_{|p|=s}(\partial_{\bm{x}}^p f,\partial_{\bm{x}}^p f)_{\mathcal{L}_{QP}^{2}(\mathbb{R}^d)} = \sum_{\bm{\lambda}\in\Lambda}|\bm{\lambda}|^{2s}|\hat{f}_{\bm{\lambda}}|^2.
$$
For the case  $s=0$, we have $\mathcal{H}_{QP}^0(\mathbb{R}^d)=\mathcal{L}^2_{QP}(\mathbb{R}^d)$. To simplify notation, we use $\|\cdot \|_0$ to denote the norm $\|\cdot\|_{\mathcal{L}^2_{QP}(\mathbb{R}^d)}$,  $\|\cdot\|_s$ to denote the norm $\|\cdot\|_{\mathcal{H}_{QP}^{s}(\mathbb{R}^d)}$.

\subsection{Quasiperiodic elliptic equations}
\label{sec:quasiperiodic_elliptic:equation}
With the preparation in the previous subsection, we introduce the quasiperiodic elliptic equation and discuss its well-posedness. Consider the following equation
\begin{align}
	\mathcal{L}u(\bm{x})=f(\bm{x}),~~\bm{x}\in\mathbb{R}^{d},
\label{eqn:elliptic}
\end{align}
where the second order elliptic operator  $\mathcal{L}: \mathcal{H}^2_{QP}(\mathbb{R}^d)\to \mathcal{L}^2_{QP}(\mathbb{R}^d)$ is defined as  
$$ \mathcal{L}u(\bm{x})= -\mathrm{div}(\alpha(\bm{x})\nabla u(\bm{x})).$$  Here, the coefficient $\alpha(\bm{x})\in \mathcal{C}_{QP}^{1}(\mathbb{R}^d)$ and the source term $f(\bm{x})\in \mathcal{L}_{QP}^{2}(\mathbb{R}^d)$. Furthermore, $\alpha(\bm{x})$ is uniformly elliptic, that is, for all $\bm{x},~\xi \in\mathbb{R}^d$, 
$$\xi^T\alpha(\bm{x})\xi\geq \gamma_0|\xi|^2, \quad \gamma_0>0.
$$
By multiplying \eqref{eqn:elliptic} with an arbitrary test function $v\in \mathcal{H}_{QP}^1(\mathbb{R}^d)$ and integrating over $\mathbb{R}^d$, we can apply an integration by parts, taking into account that the boundary integral vanishes (see \cite[Lemma 3.4]{jiang2024high}). This leads to the  variational formulation associated with \eqref{eqn:elliptic}: seek $u\in\mathcal{H}^{1}_{QP}(\mathbb{R}^d)$ such that
\begin{equation}\label{eqn:variational}
    \mathscr{B}(u, v)=(f, v), \quad\forall v\in\mathcal{H}^{1}_{QP}(\mathbb{R}^d),
\end{equation}
where the bilinear form $\mathcal{B}(\cdot, \cdot): \mathcal{H}^{1}_{QP}(\mathbb{R}^d)\times \mathcal{H}^{1}_{QP}(\mathbb{R}^d)\to\mathbb{R}$ is defined as
$$ 
{\mathscr{B}(u, v)}:= (\alpha\nabla u, \nabla v),\quad u, v\in\mathcal{H}^{1}_{QP}(\mathbb{R}^d).
$$ 
The solution to \eqref{eqn:variational} is not unique and can be determined up to a constant. To address the non-uniqueness of the solution, we impose the assumption that $u$ has zero mean value. This allows us to define the space 
$$
\overline{\mathcal{H}}_{QP}^1(\mathbb{R}^d):=\Big\{u(\bm{x})\in\mathcal{H}_{QP}^1(\mathbb{R}^d): ~\bbint u(\bm{x})d\bm{x}=0\Big\}
$$
equipped with $\mathcal{H}_{QP}^1(\mathbb{R}^d)$-norm. The revised variational problem for \eqref{eqn:elliptic} is now as follows: find $u\in\overline{\mathcal{H}}^{1}_{QP}(\mathbb{R}^d)$ such that
\begin{equation}\label{eqn:variational_1}
    {\mathscr{B}(u, v)}=(f, v), \quad\forall v\in\overline{\mathcal{H}}^{1}_{QP}(\mathbb{R}^d).
\end{equation}
By applying the Lax-Milgram Lemma,  the existence and  uniqueness of the weak solution  to \eqref{eqn:variational_1} in $\overline{\mathcal{H}}^1_{QP}(\mathbb{R}^d)$ follow immediately.

\begin{theorem}\label{thm:well_posed}
The variation problem \eqref{eqn:variational_1} has a unique solution.
\end{theorem}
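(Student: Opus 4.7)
The plan is to apply the Lax-Milgram Lemma to the bilinear form $\alpha(\cdot,\cdot)$ and the linear functional $v\mapsto(f,v)_0$ on the Hilbert space $\overline{\mathcal{H}}_{QP}^{1}(\mathbb{R}^d)$. First, I would observe that $\overline{\mathcal{H}}_{QP}^{1}(\mathbb{R}^d)$ is the kernel of the continuous linear functional $u\mapsto\mathcal{M}\{u\}=\hat{u}_{\bm{0}}$ on $\mathcal{H}_{QP}^{1}(\mathbb{R}^d)$, hence a closed subspace that inherits a Hilbert space structure. Bilinearity and linearity follow at once from the definitions, and continuity is a direct consequence of Cauchy-Schwarz in $\mathcal{L}_{QP}^{2}(\mathbb{R}^d)$ together with the uniform upper bound $\alpha(\bm{x})\le\gamma_1$:
$$
|\alpha(u,v)|\le\gamma_1\,\|\nabla u\|_0\,\|\nabla v\|_0\le\gamma_1\,\|u\|_1\,\|v\|_1,\qquad |(f,v)_0|\le\|f\|_0\,\|v\|_1.
$$

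The step I expect to be the main obstacle is coercivity. The lower bound $\alpha(\bm{x})\ge\gamma_0$ immediately yields $\alpha(u,u)\ge\gamma_0\,\|\nabla u\|_0^2$, so coercivity in the full $\mathcal{H}_{QP}^1$-norm reduces to a Poincaré-type inequality $\|u\|_0\lesssim\|\nabla u\|_0$ on mean-zero quasiperiodic functions. By Parseval and \Cref{lemma:consistent_fourier_coefficient},
$$
\|u\|_0^2=\sum_{\bm{k}\in\mathbb{Z}^n\setminus\{\bm{0}\}}|\hat{u}_{\bm{\lambda_k}}|^2,\qquad \|\nabla u\|_0^2=\sum_{\bm{k}\in\mathbb{Z}^n\setminus\{\bm{0}\}}|\bm{\lambda_k}|^2\,|\hat{u}_{\bm{\lambda_k}}|^2,
$$
so such an estimate would demand a uniform lower bound on $|\bm{\lambda_k}|=|\bm{P}\bm{k}|$ for $\bm{k}\ne\bm{0}$. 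Since the columns of $\bm{P}$ are only $\mathbb{Q}$-linearly independent, small-divisor phenomena prevent any such uniform bound, so a direct Poincaré approach in the ambient $\mathcal{H}^1_{QP}$-norm fails.

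My plan is therefore to re-equip $\overline{\mathcal{H}}_{QP}^{1}(\mathbb{R}^d)$ with the seminorm $|u|_1=\|\nabla u\|_0$ as its working norm. This is a genuine norm: if $|u|_1=0$ then $\hat{u}_{\bm{\lambda_k}}=0$ for all $\bm{k}\ne\bm{0}$, and the mean-zero constraint kills the remaining coefficient, forcing $u\equiv 0$. Completeness follows by identifying the space, via Fourier coefficients, with the weighted $\ell^2$ sequence space indexed by $\bm{k}\in\mathbb{Z}^n\setminus\{\bm{0}\}$ with weights $|\bm{\lambda_k}|^2$. In this topology, coercivity is immediate, $\alpha(u,u)\ge\gamma_0\,|u|_1^2$, the continuity bound for $\alpha(\cdot,\cdot)$ passes through verbatim as $|\alpha(u,v)|\le\gamma_1\,|u|_1\,|v|_1$, and continuity of $v\mapsto(f,v)_0$ can be obtained by a Cauchy-Schwarz estimate on the Fourier side with dual weights $|\bm{\lambda_k}|^{-1}$ (implicitly placing $f$ in the corresponding dual space, which may carry a compatibility condition analogous to $\mathcal{M}\{f\}=0$). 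The Lax-Milgram Lemma then delivers the unique solution $u\in\overline{\mathcal{H}}_{QP}^{1}(\mathbb{R}^d)$, completing the argument.
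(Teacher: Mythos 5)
Your suspicion is well-founded, and it points to a genuine gap in the paper's argument, which (it turns out) takes exactly the route you decided would fail. The paper deduces \Cref{thm:well_posed} from Lax--Milgram \emph{in the full} $\|\cdot\|_1$ \emph{norm}, relying on the Poincar\'e inequality $\|u\|_1\le c_2|u|_1$ of \Cref{lemma_appendix:Poincaré's inequality}, which in turn is deduced from the equivalent-modulus theorem \Cref{lemma_appendix:eqv_mod_thm}. The crucial step in the appendix proof of \Cref{lemma_appendix:eqv_mod_thm} extracts from a bounded sequence in $\mathcal{H}^s_{QP}(\mathbb{R}^d)$ a subsequence convergent in $\mathcal{H}^0_{QP}(\mathbb{R}^d)$, i.e.\ it silently assumes compactness of the embedding $\mathcal{H}^s_{QP}\hookrightarrow\mathcal{L}^2_{QP}$. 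For $n>d$ this is false, for precisely the small-divisor reason you raise: since $\{\bm{Pk}:\bm{k}\in\mathbb{Z}^n\setminus\{\bm{0}\}\}$ accumulates at $\bm{0}$, pick $\bm{k}_m\ne\bm{0}$ with $|\bm{Pk}_m|\to 0$ and set $u_m(\bm{x})=e^{\imath(\bm{Pk}_m)^T\bm{x}}$; then $\|u_m\|_s^2=(1+|\bm{Pk}_m|^2)^s$ is bounded, but $\|u_m-u_\ell\|_0^2=2$ whenever $m\ne\ell$. The same sequence directly violates \Cref{lemma_appendix:Poincaré's inequality}, since $u_m\in\overline{\mathcal{H}}^1_{QP}(\mathbb{R}^d)$, $\|u_m\|_1\to 1$, yet $|u_m|_1=|\bm{Pk}_m|\to 0$. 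The lemma chain the paper builds is therefore broken, and \Cref{thm:well_posed} is in fact false as stated for generic $f\in\mathcal{L}^2_{QP}(\mathbb{R}^d)$: already for $\alpha\equiv 1$ the formal solution has $\hat{u}_{\bm{\lambda_k}}=\hat{f}_{\bm{\lambda_k}}/|\bm{Pk}|^2$, which need not be square-summable against the weight $1+|\bm{Pk}|^2$.

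Your cure --- taking the gradient seminorm $|\cdot|_1$ as the working norm on the mean-zero space --- is the standard one in quasiperiodic/stochastic homogenization, and with it coercivity and boundedness are immediate. The two points you hedge on, however, are real and should be made explicit rather than glossed. First, $\overline{\mathcal{H}}^1_{QP}(\mathbb{R}^d)$ as defined in the paper is \emph{not} complete under $|\cdot|_1$ (again by small divisors); its $|\cdot|_1$-completion is the weighted sequence space you describe, which strictly contains it. Lax--Milgram then yields a unique solution in that completion, whose gradient is a genuine element of $(\mathcal{L}^2_{QP}(\mathbb{R}^d))^d$, but the solution itself may fail to lie in $\mathcal{L}^2_{QP}(\mathbb{R}^d)$ --- which is exactly why the homogenization literature tends to pose the corrector problem for the gradient field. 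Second, $\mathcal{M}\{f\}=0$ alone does not make $v\mapsto(f,v)$ continuous in the seminorm topology; a clean sufficient condition is $f=\mathrm{div}\,g$ with $g\in(\mathcal{L}^2_{QP}(\mathbb{R}^d))^d$, which is precisely the structure of the corrector equation \eqref{eqn:corrector}. With these two caveats pinned down, your argument is sound, whereas the paper's as written is not.
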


Before proving \Cref{thm:well_posed}, we introduce two essential results: the equivalent modulus theorem (\Cref{lemma_appendix:eqv_mod_thm}) and  the Poincar\'e inequality (\Cref{lemma_appendix:Poincaré's inequality}), both crucial for our proof.

\begin{lemma}[Equivalent modulus theorem \cite{braess2001finite}]\label{lemma_appendix:eqv_mod_thm}
If bounded linear functionals $L_1, L_2,\cdots,L_K$ on $\mathcal{H}_{QP}^s(\mathbb{R}^d), s\in\mathbb{N}^{+}$  is not simultaneously zero for any non-zero polynomial of degree $\leq s-1$, then for all $u\in \mathcal{H}_{QP}^s(\mathbb{R}^d)$, modules $\|u\|_s$ and $|u|_s+\sum_{i=1}^K|L_i(u)|$ are equivalent, that is, there are constants $c_1, c_2>0$ such that the following inequality holds
\begin{equation}\label{eqn:eq_mod_thm}
c_1\bigg(|u|_s+\sum_{i=1}^K|L_i(u)|\bigg)\leq\|u\|_s\leq c_2 \bigg(|u|_s+\sum_{i=1}^K|L_i(u)|\bigg).
\end{equation}
\end{lemma}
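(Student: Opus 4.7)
The plan is to establish the two-sided equivalence in two separate pieces: the left (easy) inequality $|u|_s+\sum_i|L_i(u)|\lesssim\|u\|_s$ follows directly from the Fourier-series definitions and the boundedness of the $L_i$, while the right (hard) inequality $\|u\|_s\lesssim |u|_s+\sum_i|L_i(u)|$ is obtained by a Bramble--Hilbert/Deny--Lions style contradiction-compactness argument (as in \cite{braess2001finite}), transferred to the quasiperiodic setting via the parent-function correspondence of \Cref{lemma:consistent_fourier_coefficient}.

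For the easy direction, the Fourier characterization gives $|u|_s\le\|u\|_s$ at once since $|\bm{\lambda}|^{2s}\le (1+|\bm{\lambda}|^2)^s$, and boundedness of each $L_i$ on $\mathcal{H}^s_{QP}(\mathbb{R}^d)$ yields $|L_i(u)|\le \|L_i\|_{\ast}\|u\|_s$. Summing and setting $c_1^{-1}=1+\sum_{i=1}^K\|L_i\|_{\ast}$ produces the left inequality.

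For the hard direction I would argue by contradiction: if no constant $c_2$ worked, there would exist a sequence $\{u_n\}\subset\mathcal{H}^s_{QP}(\mathbb{R}^d)$ with $\|u_n\|_s=1$ but $|u_n|_s+\sum_{i=1}^K|L_i(u_n)|\to 0$. By \Cref{lemma:consistent_fourier_coefficient}, each $u_n$ corresponds to a periodic parent $\mathcal{F}_n\in\mathcal{H}^s(\mathbb{T}^n)$ of the same norm, so the classical Rellich--Kondrachov compact embedding of $\mathcal{H}^s(\mathbb{T}^n)$ into $\mathcal{H}^{s-1}(\mathbb{T}^n)$ extracts a subsequence converging strongly in $\mathcal{H}^{s-1}(\mathbb{T}^n)$, which pulls back to strong $\mathcal{H}^{s-1}_{QP}(\mathbb{R}^d)$-convergence $u_{n_k}\to u$. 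The binomial expansion $(1+|\bm{\lambda}|^2)^s=\sum_{j=0}^s\binom{s}{j}|\bm{\lambda}|^{2j}$ yields on the Fourier side the estimate $\|v\|_s^2\le C_s(\|v\|_{s-1}^2+|v|_s^2)$, and applying it to the differences $u_{n_k}-u_{n_l}$ combined with $|u_{n_k}|_s\to 0$ upgrades the $\mathcal{H}^{s-1}_{QP}$-Cauchy property to an $\mathcal{H}^s_{QP}$-Cauchy property, giving $u_{n_k}\to u$ strongly in $\mathcal{H}^s_{QP}(\mathbb{R}^d)$ with $\|u\|_s=1$. Passing to the limit yields $|u|_s=0$ and $L_i(u)=0$ for every $i$; the Fourier identity then forces $\hat{u}_{\bm{\lambda}}=0$ for all $\bm{\lambda}\neq\bm{0}$, so $u$ reduces to a constant --- the only polynomial of degree $\le s-1$ lying in $\mathcal{H}^s_{QP}(\mathbb{R}^d)$. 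The hypothesis on the $L_i$ then forces $u\equiv 0$, contradicting $\|u\|_s=1$.

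The main obstacle is the compactness step itself: since $\mathcal{H}^s_{QP}(\mathbb{R}^d)$ is defined on all of $\mathbb{R}^d$ rather than on a bounded domain, Rellich--Kondrachov is not directly applicable. The remedy is to exploit the projection-matrix identification with periodic functions on the fixed-dimensional torus $\mathbb{T}^n$, where compactness is classical and the Sobolev norms correspond via \Cref{lemma:consistent_fourier_coefficient}; this tacitly requires the extracting sequence to share a common projection matrix $\bm{P}$, which is consistent with the quasiperiodic framework used throughout the rest of the paper.
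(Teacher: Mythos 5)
Your proof follows the paper's own route closely: the easy inequality is obtained identically (boundedness of the $L_i$ plus $|u|_s\le\|u\|_s$, with $c_1^{-1}=1+\sum_i\|L_i\|_\ast$), and the hard inequality is a Peetre--Tartar contradiction argument with a normalized sequence $\|u_n\|_s=1$, $|u_n|_s+\sum_i|L_i(u_n)|\to 0$, followed by a compactness extraction and passage to the limit. You have, commendably, noticed the real weak point that the paper passes over in silence: the paper simply asserts that a bounded sequence in $\mathcal{H}^s_{QP}(\mathbb{R}^d)$ has a subsequence converging in $\mathcal{H}^0_{QP}(\mathbb{R}^d)$, which is a Rellich-type claim that is not automatic on all of $\mathbb{R}^d$. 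However, your proposed remedy — pass to the parent functions and use Rellich on $\mathbb{T}^n$ — rests on the incorrect premise that $u_n$ and its parent $\mathcal{F}_n$ have ``the same norm'' in $\mathcal{H}^s$. \Cref{lemma:consistent_fourier_coefficient} identifies Fourier coefficients, $\hat{f}_{\bm{\lambda_k}}=\hat{\mathcal{F}}_{\bm{k}}$, and hence identifies the $\mathcal{L}^2$ norms via Parseval; but $\|f\|^2_{\mathcal{H}^s_{QP}(\mathbb{R}^d)}$ carries the weight $(1+|\bm{Pk}|^2)^s$, whereas $\|\mathcal{F}\|^2_{\mathcal{H}^s(\mathbb{T}^n)}$ carries $(1+|\bm{k}|^2)^s$. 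Because $\bm{P}$ has $\mathbb{Q}$-linearly independent columns, $|\bm{Pk}|$ can be arbitrarily small while $|\bm{k}|$ is large (small denominators, e.g. $|k_1+\sqrt2\,k_2|$), so $\|u_n\|_{\mathcal{H}^s_{QP}}\le 1$ does \emph{not} give any bound on $\|\mathcal{F}_n\|_{\mathcal{H}^s(\mathbb{T}^n)}$, and the torus Rellich argument never gets started.

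More fundamentally, the compactness you want to import simply fails in this setting: the embedding $\mathcal{H}^s_{QP}(\mathbb{R}^d)\hookrightarrow\mathcal{L}^2_{QP}(\mathbb{R}^d)$ is not compact for an irrational projection matrix. The spectrum $\Lambda=\bm{P}\mathbb{Z}^n$ accumulates at the origin, so one may choose distinct $\bm{\lambda}_m\in\Lambda\setminus\{0\}$ with $|\bm{\lambda}_m|\le 1$; then the exponentials $e^{\imath\bm{\lambda}_m^T\bm{x}}$ satisfy $\|e^{\imath\bm{\lambda}_m^T\bm{x}}\|_s^2=(1+|\bm{\lambda}_m|^2)^s\le 2^s$ yet are pairwise at $\mathcal{L}^2_{QP}$-distance $\sqrt2$, so no subsequence can converge. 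Thus the gap is genuine: it cannot be closed by transferring to the torus, and it is present in the paper's own proof as well. (Your closing observation that the limit $u$ in the contradiction argument would have to be a constant — since $|u|_s=0$ forces $\hat u_{\bm\lambda}=0$ for $\bm\lambda\ne 0$ — is correct and sharper than the paper's ``polynomial of degree $\le s-1$''; but it does not rescue the compactness step.) A sound proof would need an additional quantitative Diophantine assumption on $\bm{P}$ that bounds $|\bm{Pk}|$ from below, or a different argument that avoids compactness altogether.
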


\begin{lemma}[Poincar\'e inequality]\label{lemma_appendix:Poincaré's inequality}
    For all $u\in \overline{\mathcal{H}}^1_{QP}(\mathbb{R}^d)$, we have
    \begin{equation}\label{eqn_appendix:poincare}
        \|u\|_1\leq c_2|u|_1.
    \end{equation}
\end{lemma}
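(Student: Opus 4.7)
The plan is to deduce the Poincaré inequality directly from the Equivalent modulus theorem (\Cref{lemma_appendix:eqv_mod_thm}) applied with $s=1$ and a single bounded linear functional $L_1$ taken to be the mean value functional
\[
L_1(u) := \bbint u(\bm{x})\,d\bm{x}.
\]

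First I would verify the two hypotheses of \Cref{lemma_appendix:eqv_mod_thm}. For boundedness on $\mathcal{H}^1_{QP}(\mathbb{R}^d)$, the Parseval identity \eqref{eqn:parseval} and the observation that $L_1(u)$ equals the Fourier coefficient of $u$ at $\bm{\lambda}=\bm{0}$ give
\[
|L_1(u)| = |\hat{u}_{\bm{0}}| \le \|u\|_0 \le \|u\|_1,
\]
so $L_1$ is a bounded linear functional. For the nondegeneracy condition, polynomials of degree $\leq s-1 = 0$ are just constants $c\in\mathbb{R}$, and for any nonzero such constant $L_1(c)=c\neq 0$, so $L_1$ does not vanish identically on this finite-dimensional space.

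With the hypotheses verified, the theorem yields a constant $c_2>0$ (independent of $u$) such that
\[
\|u\|_1 \;\le\; c_2\bigl(|u|_1 + |L_1(u)|\bigr)
\qquad \forall\, u\in \mathcal{H}^1_{QP}(\mathbb{R}^d).
\]
Restricting to $u\in\overline{\mathcal{H}}^1_{QP}(\mathbb{R}^d)$ kills the second term by definition, and we obtain $\|u\|_1 \leq c_2 |u|_1$, which is \eqref{eqn_appendix:poincare}.

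The only subtle point — and the step I would double-check before declaring the argument complete — is the applicability of \Cref{lemma_appendix:eqv_mod_thm} itself in the quasiperiodic setting, since the statement is borrowed from the classical finite element context of \cite{braess2001finite}. Here the ``polynomials of degree $\leq s-1$'' that belong to $\mathcal{H}^s_{QP}(\mathbb{R}^d)$ reduce to constants (non-constant polynomials are not quasiperiodic), so the kernel in question is genuinely one-dimensional and the compactness/quotient argument underlying the equivalent modulus theorem goes through in exactly the same way as in the periodic Sobolev setting on $\mathbb{T}^n$, invoking \Cref{lemma:consistent_fourier_coefficient} to transport between the parent function on $\mathbb{T}^n$ and $f$ on $\mathbb{R}^d$. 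Once that abstract ingredient is granted, the Poincaré inequality is immediate.
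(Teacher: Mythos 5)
Your proof is correct and follows essentially the same route as the paper: apply the equivalent modulus theorem with $s=1$ and the mean-value functional $L(u)=\bbint u\,d\bm{x}$, verify boundedness and nondegeneracy on constants, and use $\bbint u\,d\bm{x}=0$ on $\overline{\mathcal{H}}^1_{QP}(\mathbb{R}^d)$ to drop the second term. The only cosmetic difference is that you bound $|L(u)|$ via the Parseval identity and the $\bm{\lambda}=\bm{0}$ Fourier coefficient, whereas the paper invokes H\"older's inequality; both yield $|L(u)|\leq\|u\|_0\leq\|u\|_1$.
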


The proofs of these two lemmas are provided in \Cref{sec_appendix:eqv_mod_thm} and \Cref{sec_appendix:poincaré_inequality}, respectively. 
Now we prove the \Cref{thm:well_posed}.
\begin{proof}
According to the Lax-Milgram theorem, 
we need to prove the boundedness and coercivity of the bilinear form $\alpha(\cdot, \cdot)$. The upper boundedness can be directly obtained from 
\begin{align}
    |\mathscr{B}(u, v)|\leq\gamma_1\Big|\bbint_{\mathbb{R}^d}\nabla u \overline{\nabla v}d\bm{x}\Big|\lesssim|u|_1|v|_1\lesssim\|u\|_1\|v\|_1.
\end{align}
Using the Poincar\'e inequality \eqref{lemma_appendix:Poincaré's inequality}, we have
\begin{align}
|\mathscr{B}(u,u)|\geq\gamma_0\Big|\bbint_{\mathbb{R}^d}\nabla u\overline{\nabla u}d\bm{x}\Big|\gtrsim |u|_1|u|_1\gtrsim\|u\|_1^2,
\end{align}
which implies that $\mathscr{B}(\cdot, \cdot)$ is coercive. 
\end{proof}

\subsection{Projection method}
\label{sec:numerical_methods:pm}

In this subsection, we provide a detailed introduction to the PM. For an integer $N\in\mathbb{N}^{+}$, denote	 
\begin{align*}
	K_N^{n}=\left\{\bm{k}=(k_j)_{j=1}^{n}\in\mathbb{Z}^{n}:-N/2\leq k_j <N/2 \right\}.
\end{align*}
Next, we discretize the torus $\mathbb{T}^{n}$ as
$$
	\mathbb{T}_N^n=\left\{\bm{y_j}=( 2\pi j_1/N, \cdots, 2\pi j_n/N )\in\mathbb{T}^{n}: 0\leq j_1,\cdots, j_n< N, ~\bm{j}=(j_1,\cdots, j_n)\right\}.
$$
Namely, we distribute $N$ discrete points uniformly in each spatial direction of the $n$-dimensional space, resulting in  $D=N^n$ discrete points in total. We denote the grid function space as 
$$
	S_N=\left\{\mathcal{F}:\mathbb{Z}^{n}\mapsto\mathbb{C},~ \mathcal{F}\text{ is  periodic on} ~
     \mathbb{T}_N^n\right\}
$$
and define the $\ell^2$-inner product $\left(\cdot , \cdot \right)_N$ on $S_N$ as
$$
	\left(\phi_1,\phi_2\right)_N=\frac{1}{D}\sum_{\bm{y_j}\in\mathbb{T}^n_N}\phi_1(\bm{y_j})\overline{\phi}_2(\bm{y_j}),~~\phi_1,\phi_2\in S_N.
$$

For $\bm{k}_1, \bm{k}_2\in\mathbb{Z}^n$, $\bm{P}\in\mathbb{P}^{d\times n}$, we denote $\bm{Pk}_{1}{\overset{Z}{=}}\bm{Pk}_2$, if  
$\bm{Pk}_1=\bm{Pk}_2 +{Z}\bm{Pm}, {Z}\in\mathbb{Z}, \bm{m}\in\mathbb{Z}^n$. In particular, if $\bm{P}=I_n$,  $\bm{k}_1{\overset{Z}{=}}\bm{k}_2$ implies $\bm{k}_1=\bm{k}_2+{Z}\bm{m}$.
We have the following discrete orthogonality
\begin{equation}\label{eq:orth}
\Big(e^{\imath\bm{k}_1^T\bm{y_j}},e^{\imath\bm{k}_2^T\bm{y_j}}\Big)_N=
		\left\{
		\begin{aligned}
			&1, ~~~~\bm{k}_1{\overset{Z}{=}}\bm{k}_2,\\
		  &0,~~~~ \mathrm{otherwise}.\\
		\end{aligned}    			
		\right.
\end{equation}
Consequently, we can compute the discrete Fourier coefficients of a high-dimensional periodic function $F(\bm{y})$ as follows
$$
\widetilde{\mathcal{F}}_{\bm{k}}=\Big(\mathcal{F}(\bm{y_j}),e^{\imath\bm{k}^T\bm{y_j}}\Big)_N=\frac{1}{D}\sum_{\bm{y_j}\in\mathbb{T}^n_N}\mathcal{F}(\bm{y_j})e^{-\imath\bm{k}^T\bm{y_j}}.
$$

{From  \Cref{lemma:consistent_fourier_coefficient}, we have} 
\begin{equation}\label{eqn:discrete_Fourier_coefficient}
\widetilde{f}_{\bm{\lambda_k}}=\widetilde{\mathcal{F}}_{\bm{k}},
\end{equation}
which allows us to numerically simulate low-dimensional  quasiperiodic functions  using information from high-dimensional parent functions.
Therefore, we can define the \emph{discrete Fourier-Bohr transform} of a quasiperiodic function $f(\bm{x})$ as follows
\begin{equation}\label{eqn:dft}  f(\bm{x_j})=\sum_{\bm{\lambda_k}\in\Lambda_{N}^{d}}\widetilde{f}_{\bm{\lambda_k}}e^{\imath\bm{\lambda_k}^T\bm{x_j}},\quad \Lambda_N^d=\left\{\bm{\lambda_k}: \bm{\lambda_k}=\bm{Pk}, ~\bm{k}\in K_N^n\right\},
\end{equation}
where $\bm{x_j}\in X_{\bm{P}}:=\left\{\bm{x_j}=\bm{Py_j}: ~\bm{y_j}\in\mathbb{T}_N^n,~ \bm{P}\in\mathbb{P}^{d\times n}\right\}$ are \emph{collocation points}. The truncation  of a quasiperiodic function $f$  defined as 
\begin{equation*}
        T_N f(\bm{x})=\sum_{\bm{\lambda}_{\bm{k}}\in\Lambda_N^d} \hat{f}_{\bm{\lambda_k}}e^{\imath\bm{\lambda}_{\bm{k}}^T\bm{x}}:=f_T(\bm{x}),
\end{equation*}
 and the trigonometric interpolation of $f$ is given by 
$$
I_Nf(\bm{x})=\sum_{\bm{\lambda_k}\in\Lambda_{N}^{d}}\widetilde{f}_{\bm{\lambda_k}}e^{\imath\bm{\lambda_k}^T\bm{x}}:=f_I(\bm{x}).
$$ 
Consequently, $f(\bm{x}_j)=I_Nf(\bm{x_j}).$ 


\section{PM for quasiperiodic elliptic PDEs: formulation and analysis}
\label{sec:numerical_methods} 

In this section, we  present the PM for the numerical solution of quasiperiodic elliptic equations.
We derive the corresponding discrete scheme  and linear system for \eqref{eqn:elliptic} by using the tensor-vector-index conversion technique. Moreover, we provide a rigorous convergence analysis for the PM method in solving quasiperiodic elliptic PDEs.


\subsection{PM formulation for quasiperiodic elliptic PDEs}
\label{sec:numerical_methods:discrete_scheme_and_linear_system}

In this subsection, we discretize equation \eqref{eqn:elliptic} using the PM. We  then develop a discrete scheme based on the variational formulation and generate the corresponding linear system using index conversion.

\subsubsection{Discrete scheme} 
\label{sec:numerical_methods:derivation}
Within the PM framework, we can discretize $\alpha(\bm{x})$ and $f(\bm{x})$ in \eqref{eqn:elliptic} as follows
	$$	\alpha(\bm{x_j})=\sum_{\bm{\lambda}_\alpha\in\Lambda_{\alpha,N}^d}\widetilde{\alpha}_{\bm{\lambda}_\alpha}e^{\imath\bm{\lambda}^T_\alpha\bm{x_j}},	\quad\widetilde{\alpha}_{\bm{\lambda}_\alpha}=\widetilde{\mathcal{A}}_{\bm{k}_\mathcal{A}}=\frac{1}{D}\sum_{\bm{y_j}\in\mathbb{T}_N^n}\mathcal{A}(\bm{y_j})e^{-\imath\bm{k}^T_\mathcal{A}\bm{y_j}},\quad \bm{x_j}\in X_{\bm{P}},
	$$

	$$
f(\bm{x_j})=\sum_{\bm{\lambda}_f\in\Lambda_{f,N}^d}\widetilde{f}_{\bm{\lambda}_f}e^{\imath\bm{\lambda}^T_f\bm{x_j}},\quad\widetilde{f}_{\bm{\lambda}_f}=\widetilde{\mathcal{F}}_{\bm{k}_{\mathcal{F}}}=\frac{1}{D}\sum_{\bm{y_j}\in\mathbb{T}_N^n}\mathcal{F}(\bm{y_j})e^{-\imath\bm{k}^T_{\mathcal{F}}\bm{y_j}},\quad \bm{x_j}\in X_{\bm{P}},
	$$
where $\mathcal{A},  \mathcal{F}$ are parent functions of $\alpha, f$, respectively. {The set $\Lambda_{\alpha,N}^d$ is defined as follows
\begin{equation*}
    \Lambda_{\alpha,N}^d=\left\{\bm{\lambda}_\alpha: \bm{\lambda}_\alpha=\bm{Pk}_{\mathcal{A}}, ~\bm{k}_{\mathcal{A}}\in K_N^n\right\}.
\end{equation*}
Similarly, the set $\Lambda_{f, N}^d$ is defined by  
\begin{equation*}
\Lambda_{f,N}^d=\left\{\bm{\lambda}_f: \bm{\lambda}_f=\bm{Pk}_{\mathcal{F}}, ~\bm{k}_{\mathcal{F}}\in K_N^n\right\}.    
\end{equation*}
}
A simple calculation reveals that the spectral points of $u$ satisfy $\bm{\lambda}_u=\bm{\lambda}_f-\bm{\lambda}_\alpha$, which means that the spectral point set of $u$ is spanned by the spectral point sets of $\alpha$ and $f$. Based on this, we can determine the set
 $$
 \Lambda_{u,N}^d\subseteq \mathrm{span}_{\mathbb{Z}}\left\{\Lambda_{\alpha, N}^d,\Lambda_{f, N}^d\right\}:=\left\{k_1\bm{\lambda}_\alpha+k_2\bm{\lambda}_f: \bm{\lambda}_\alpha\in\Lambda_{\alpha, N}^d, ~\bm{\lambda}_f\in\Lambda_{f, N}^d,~ k_1, k_2\in\mathbb{Z}\right\}
 $$ 
 and  discretize $u(\bm{x})$ as
$$
u(\bm{x_j})=\sum_{\bm{\lambda}_u\in\Lambda_{u,N}^d}\widetilde{u}_{\bm{\lambda}_u}e^{\imath\bm{\lambda}^T_u\bm{x_j}}, \quad\widetilde{u}_{\bm{\lambda}_u}=\widetilde{\mathcal{U}}_{\bm{k}_\mathcal{U}}=\frac{1}{D}\sum_{\bm{y_j}\in\mathbb{T}_N^n}\mathcal{U}(\bm{y_j})e^{-\imath\bm{k}^T_\mathcal{U}\bm{y_j}},\quad\bm{x_j}\in X_{\bm{P}},
$$
where $\mathcal{U}$ is the parent function of $u$.
\begin{remark}
We provide a simple example to illustrate how the spectral point set $\Lambda_{u,N}^d$ is spanned. Let $\Lambda_{\alpha,N}^d\subseteq\mathbb{Z}(1,\sqrt{2})$ and $\Lambda_{f,N}^d\subseteq\mathbb{Z}(1,\sqrt{3})$, where $\mathbb{Z}(1,\sqrt{2})$ represents the spectral point set formed by integer linear combinations of 1 and $\sqrt{2}$, and similarly for $\mathbb{Z}(1,\sqrt{3})$. In this case, $\Lambda_{u,N}^d$ can be spanned by these two spectral point sets, which can be expressed as follows  
$$
    \Lambda^d_{u,N}\subseteq\mathrm{span}_{\mathbb{Z}}\left\{\Lambda_{\alpha,N}^d,\Lambda_{f,N}^d\right\}=\mathbb{Z}(1,\sqrt{2},\sqrt{3}).
$$
\end{remark}

 Let $ V^N:=\mathrm{span}\big\{e^{\imath\bm{\lambda}_v^T\bm{x}}:\bm{\lambda}_v\in\Lambda_{u,N}^d,~\bm{x}\in\mathbb{R}^{d}\}$ be a finite-dimensional subspace of  $\mathrm{QP}(\mathbb{R}^d)$. We then select a test function $v$ from the space $\overline{V}^N:=\left\{ v\in V^N:~ \widetilde{v}_{\bm{0}}=0\right\}\subseteq\overline{\mathcal{H}}_{QP}^1(\mathbb{R}^d)$. In the following, our goal is to find an approximate solution of \eqref{eqn:variational_1} in $\overline{V}^N$, i.e., $u\in \overline{V}^N$, such  that
 \begin{equation}\label{eqn:discrete_variational}
     {\mathscr{B}_N(u,v)}:=(\alpha\nabla u, \nabla v)_N=(f, v)_N, \quad \forall v\in \overline{V}^N.
 \end{equation}
Clearly, ${\mathscr{B}_N}(\cdot, \cdot)$ is  uniformly elliptic, thus \eqref{eqn:discrete_variational} has a unique solution in $\overline{V}^N$, , as demonstrated in \Cref{thm:pm_wellposed}.
\begin{theorem}\label{thm:pm_wellposed}
    The discrete variational formulation \eqref{eqn:discrete_variational} has a unique solution.
 \end{theorem}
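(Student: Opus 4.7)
The plan is to exploit the finite-dimensionality of $\overline{V}^N$: since \eqref{eqn:discrete_variational} is a square linear system on a finite-dimensional space, both existence and uniqueness reduce to injectivity of the associated linear map. Accordingly, I would suppose $u\in\overline{V}^N$ satisfies $\alpha_N(u,v)=0$ for every $v\in\overline{V}^N$ and deduce $u\equiv 0$.

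The key step is to test against $v=u\in\overline{V}^N$, which yields
\begin{equation*}
0 \;=\; \alpha_N(u,u) \;=\; \frac{1}{D}\sum_{\bm{x_j}\in X_{\bm{P}}}(\nabla u(\bm{x_j}))^T\alpha(\bm{x_j})\nabla u(\bm{x_j}) \;\geq\; \frac{\gamma_0}{D}\sum_{\bm{x_j}\in X_{\bm{P}}}|\nabla u(\bm{x_j})|^2,
\end{equation*}
so the uniform ellipticity of $\alpha(\bm{x})$ forces $\nabla u(\bm{x_j})=\bm{0}$ at every collocation point. Next I would observe that each component $\partial_{x_l}u$ again lies in $V^N$, since differentiating a quasiperiodic trigonometric polynomial with frequencies in $\Lambda^d_{u,N}$ only multiplies the Fourier coefficients by $\imath\lambda_l$ and preserves the spectral set. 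Because $|\Lambda^d_{u,N}|$ matches the number $D=N^n$ of collocation points, the discrete orthogonality \eqref{eq:orth} guarantees that the evaluation map $V^N\to\mathbb{C}^D$ is a bijection, and hence $\partial_{x_l}u\equiv 0$ for every $l$. Therefore $u$ is constant on $\mathbb{R}^d$, and the mean-zero normalization $\widetilde{u}_{\bm{0}}=0$ built into $\overline{V}^N$ then forces $u\equiv 0$.

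The main obstacle will be the ``collocation-to-identity'' step, which rests on the bijectivity of the discrete Fourier--Bohr transform on $V^N$. This bijectivity is PM-intrinsic: the pairing between $\Lambda^d_{u,N}$ and $K^n_N$ through the projection matrix turns \eqref{eq:orth} into a nondegenerate change of basis between Fourier coefficients and grid values at the collocation set $X_{\bm{P}}$. Once this compatibility is invoked, the remainder of the argument is a short algebraic consequence of ellipticity and the mean-zero constraint. A slightly longer route would mimic the proof of \Cref{thm:well_posed}, establishing a discrete Poincar\'e inequality on $\overline{V}^N$ to obtain coercivity of $\alpha_N(\cdot,\cdot)$ and then invoking Lax--Milgram; however, the finite-dimensional injectivity argument above is more direct.
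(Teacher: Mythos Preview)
Your argument is correct, and it differs from the paper's proof. The paper takes exactly the Lax--Milgram route you sketch at the end: for $u,v\in\overline{V}^N$ it uses the discrete orthogonality \eqref{eq:orth} to note that $(\nabla u,\nabla v)_N=(\nabla u,\nabla v)$, then applies the continuous Poincar\'e inequality (\Cref{lemma_appendix:Poincaré's inequality}) to obtain $\alpha_N(u,u)\geq\gamma_0\|u\|_1^2$, and likewise gets boundedness. Your approach instead works pointwise at the collocation nodes, uses the bijectivity of the discrete Fourier--Bohr transform on $V^N$ to pass from ``vanishing on the grid'' to ``vanishing identically,'' and closes with the finite-dimensional principle that injectivity implies bijectivity. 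The paper's route yields an explicit coercivity constant $\gamma_0$ on $\overline{V}^N$, which feeds naturally into the subsequent error analysis (it is precisely the constant appearing in the proof of \Cref{thm:wn}); your route is more self-contained and avoids invoking Lax--Milgram or Poincar\'e, at the cost of not producing that quantitative stability bound. Both arguments ultimately rest on the same PM-specific ingredient: the discrete orthogonality \eqref{eq:orth} restricted to $K_N^n$, which you use to identify $V^N$ with $\mathbb{C}^D$ and the paper uses to identify $(\cdot,\cdot)_N$ with $(\cdot,\cdot)$ on gradients.
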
  
\begin{proof}
   According to the Lax-Milgram Lemma, it is sufficient to prove the boundedness and coercivity of  $\mathscr{B}_N(\cdot,\cdot)$. For all $u, v\in \overline{V}^N$, we have
    \begin{equation*}
       |\mathscr{B}_N(u,v)|\leq\gamma_1(\nabla u, \nabla v)_N=\gamma_1(\nabla u, \nabla v)\leq\gamma_1\|u\|_1\|v\|_1,
    \end{equation*}
    which implies $\mathscr{B}_N(\cdot,\cdot)$ is bounded.
    
   The coercivity of $\mathscr{B}_N(\cdot,\cdot)$ follows from
     \begin{equation*}
       |\mathscr{B}_N(u,u)|\geq \gamma_0(\nabla u, \nabla u)_N =\gamma_0(\nabla u, \nabla u)\geq\gamma_0\|u\|_1^2.
    \end{equation*}
\end{proof}

In view of the discrete orthogonality of quasiperiodic base functions   
\begin{equation}\label{eqn:orthogonality}	\Big(e^{\imath\bm{\lambda}_1^T\bm{x_j}},e^{\imath\bm{\lambda}_2^T\bm{x_j}}\Big)_N=
		\left\{
		\begin{aligned}
			&1, ~~~~\bm{\lambda}_1\overset{N}{=}\bm{\lambda}_2, \\
		  &0,~~~~ \mathrm{otherwise},\\
		\end{aligned}    			
		\right.
	\end{equation}
we have
$$
(\alpha\nabla u,\nabla v)_N=\sum_{\bm{\lambda}_u\in\Lambda_{u,N}^d}\widetilde{\alpha}_{\bm{\lambda}_v-\bm{\lambda}_u}(\bm{\lambda}_v^T\bm{\lambda}_u)\widetilde{u}_{\bm{\lambda_u}},
$$
$$ 
(f,v)_N=\widetilde{f}_{\bm{\lambda}_v}.
$$
Combining with \eqref{eqn:discrete_Fourier_coefficient}, we can derive  the discrete scheme for \eqref{eqn:elliptic} as follows
\begin{equation}\label{eqn:n-discrete_equation}
\sum_{\bm{k}_{\mathcal{U}}\in K_{N}^n}\widetilde{\mathcal{A}}_{\bm{k}_{\mathcal{A}}}(\bm{P}\bm{k}_{\mathcal{V}})^T(\bm{P}\bm{k}_{\mathcal{U}})\widetilde{\mathcal{U}}_{\bm{k}_{\mathcal{U}}}=\widetilde{\mathcal{F}}_{\bm{k}_{\mathcal{V}}},\quad \bm{k}_{\mathcal{A}}=\bm{k}_{\mathcal{V}}\overset{N}{-}\bm{k}_{\mathcal{U}},
\end{equation}
where $\bm{k}_{\mathcal{V}}\overset{N}{-}\bm{k}_{\mathcal{U}}:=(\bm{k}_{\mathcal{V}}-\bm{k}_{\mathcal{U}}) (\mathrm{mod}~N)$. Note that for a given vector $\bm{k}=(k_1, \cdots, k_n)\in\mathbb{Z}^n$, we have $\bm{k}(\mathrm{mod}~N)\equiv (k_1(\mathrm{mod}~N),\cdots, k_n(\mathrm{mod}~N)).$

\subsubsection{Linear system}

The discrete scheme \eqref{eqn:n-discrete_equation} is formulated with respect to the tensor index $\bm{k}=(k_1, \cdots, k_n)\in K_N^n$. To solve it numerically, we introduce a general index conversion between tensors and vectors to obtain a matrix linear system.

For a given tensor $\mathcal{I}\in\mathbb{C}^{N_1\times\cdots\times N_n}$, we denote its size vector as
$$
\bm{N}=(N_k)_{k=1}^n, \quad N_k\in\mathbb{N}^+
$$
and the set
$$
K_{\bm{N}}^n=\left\{\bm{i}=(i_k)_{k=1}^n\in\mathbb{Z}^n: -N_k/2\leq i_k< N_k/2\right\}.
$$
Let $\bm{i}=(i_1, \cdots, i_n)\in K_{\bm{N}}^n$ be the index of $\mathcal{I}$. Define the \emph{convert bijection} as follows
\begin{equation}\label{eqn:conversion}
     \begin{aligned}
    \mathcal{C}: \quad&K_{\bm{N}}^n \rightarrow \mathbb{N},\\
	&\bm{i}\xrightarrow{\mathcal{C}} i, 
     \end{aligned}
\end{equation}
 where $i$ is determined by the rule
\begin{equation}
    i=\sum_{k=1}^{n} \overline{i}_k \prod_{t=k+1}^n N_t,\quad \overline{i}_k:= i_k(\mathrm{mod}~N_k).
\end{equation}
The index inverse conversion $\mathcal{C}^{-1}$ is defined by
\begin{equation}\label{eqn:inverse_convert}	i_k=
		\left\{
		\begin{aligned}
			&\overline{i}_k, ~~~~0\leq\overline{i}_k< N_k/2, \\
		  &\overline{i}_k-N_k,~~~~ N_k/2\leq\overline{i}_k< N_k,\\
		\end{aligned}    			
		\right.
  \quad \overline{i}_{k}=\Big\lfloor i~ (\mathrm{mod}~\prod_{t=k}^{n}N_t) \displaystyle/ \prod_{t=k+1}^n N_t \Big\rfloor,
	\end{equation}
where  $\lfloor \cdot\rfloor$ is the  floor symbol. We refer to the bijection $\mathcal{C}$ as the tensor-vector-index conversion. 

{To more clearly explain the process of index conversion, we provide a simple example. Consider a 2-dimensional tensor with $N_1 = N_2 =4$ (so $D=16$). The tensor indices $\bm{i}=(i_1, \cdots, i_n)$ range from $(-2, -2)$ to $(1, 1)$. Consider the tensor index $\bm{i}=(-1, 0)$.
\begin{itemize}
    \item \textbf{Convert to vector index:} Compute $\overline{i}_1, \overline{i}_2$
    $$
        \overline{i}_1=-1~ \mathrm{mod}~ 4 =3,\quad \overline{i}_2=0~ \mathrm{mod}~ 4 =0.
    $$
    Then the mapped vector index $i$ can be calculated by
    $$
    i=\overline{i}_1\cdot N+\overline{i}_2=12.
    $$
    \item \textbf{Inverse conversion:} For the vector index $i=12$, compute $\overline{i}_1, \overline{i}_2$
    $$
     \overline{i}_1=\lfloor12 ~\mathrm{mod}~(4\cdot4)\rfloor/4=3,\quad  \overline{i}_2=\lfloor0 ~\mathrm{mod}~(4)\rfloor/1=0.
    $$
Since $\overline{i}_1=3$  and $N_1 = 4$, the original value is $i_1=3-4=-1$. Thus, the inverse conversion correctly gives $\bm{i}=(-1, 0)$.
\end{itemize}}

{Next, we generate the linear system using $\eqref{eqn:conversion}$ with   $N_k=N$ for each $k$, since under the PM framework, the degree of freedom in each dimension is $N$.} Define
$$
A=(A_{ij})\in\mathbb{C}^{D\times D},\quad A_{ij}=\widetilde{\mathcal{A}}_{\bm{k}_{\mathcal{V}}\overset{N}{-}\bm{k}_{\mathcal{U}}},
$$
$$
W=(W_{ij})\in\mathbb{C}^{D\times D},\quad W_{ij}=(\bm{P}\bm{k}_{\mathcal{V}})^T(\bm{P}\bm{k}_{\mathcal{U}}),
$$
where indices $i, j$ are determined by
\begin{equation}\label{eqn:ij}
    \bm{k}_{\mathcal{V}}\xrightarrow{\mathcal{C}} i, \quad \bm{k}_{\mathcal{U}}\xrightarrow{\mathcal{C}} j,
\end{equation}
respectively. And the column vectors $\bm{U}$ and $\bm{F}$ are defined, respectively, by
$$
\bm{U}=(U_j)\in\mathbb{C}^{D},\quad U_j = \widetilde{\mathcal{U}}_{\bm{k}_{\mathcal{U}}},
$$
$$
\bm{F}=(F_i)\in\mathbb{C}^{D},\quad F_i = \widetilde{\mathcal{F}}_{\bm{k}_{\mathcal{V}}}.
$$
Consequently, we obtain the following  linear system 
\begin{equation}\label{eqn:linear_system}
Q\bm{U}=\bm{F},\quad Q=A\circ W\in\mathbb{C}^{D\times D}.   
\end{equation}

\begin{remark}
As  $N$ and $n$ increase,  the size of the matrix $Q\in\mathbb{C}^{D\times D}$, {where $D=N^n$,} grows excessively large, leading to a substantial increase in computational cost and memory requirements. Meanwhile, as the size of $Q$ increases, the linear system becomes ill-conditioned, slowing convergence with standard iterative methods.
To address these challenges, in \Cref{sec:compressed_storage}, we will propose a compressed storage method for reducing memory usage and an efficient preconditioner in solving the linear system \eqref{eqn:linear_system}. 

\end{remark}


\subsection{Convergence analysis}
\label{sec:convergence_analysis}
In this subsection, we provide a thorough convergence analysis of PM for solving the quasiperiodic elliptic equation \eqref{eqn:elliptic}.  

\subsubsection{Main theorem}
Let $u$ be the solution of problem \eqref{eqn:elliptic},   and $u_N$ the solution of the corresponding discrete problem
{\begin{equation}\label{eqn:discrete_elliptic}
    (\widetilde{\mathcal{L}}u, v)= (f_I, v),\quad v\in\overline{V}^N,
\end{equation}}
where the operator $\widetilde{\mathcal{L}}$  is defined by
\begin{equation}\label{eqn:interpolation_elliptic}
    \widetilde{\mathcal{L}}u:=-\mathrm{div}\left( I_N(\alpha\nabla u)\right).
\end{equation}
We now present the main result of error analysis.

{\begin{theorem}\label{thm:convergence_analysis}
Let $s\geq1$. Assume
\begin{itemize}
    \item[1.]  $u\in \overline{\mathcal{H}}_{QP}^{1}(\mathbb{R}^d)$ is the solution to  equation \eqref{eqn:variational_1}, with  parent function $\mathcal{U}\in\mathcal{H}^{s+1}(\mathbb{T}^n)$.
    \item[2.] The coefficient $\alpha\in\mathcal{C}^1_{QP}(\mathbb{R}^d)$ in  equation \eqref{eqn:variational_1} is uniformly elliptic and bounded, i.e., for all $\bm{x},~\xi \in\mathbb{R}^d$, 
$$\gamma_0|\xi|^2\leq\xi^T\alpha(\bm{x})\xi\leq\gamma_1|\xi|^2,\quad \gamma_0, \gamma_1>0.
$$  Moreover, its parent function satisfies $\mathcal{A}\in\mathcal{H}^{s}(\mathbb{T}^n)$.
    \item[3.] The source term $f$ for the equation \eqref{eqn:variational_1} belongs to $\mathcal{L}^2_{QP}(\mathbb{R}^d)$ and its parent function $\mathcal{F}\in \mathcal{H}^{s}(\mathbb{T}^n)$.
    \item[4.]  There exists a constant $C^*$, independent of $\mathcal{F}$, such that  the following regularity estimate holds
    \begin{equation}\label{eqn:regularity}
\|\mathcal{U}\|_{\mathcal{H}^{s+1}(\mathbb{T}^n)}\leq C^*\|\mathcal{F}\|_{\mathcal{H}^{s}(\mathbb{T}^n)}.    
\end{equation}
\end{itemize}
Then, there exists a constant $C$, independent of $\mathcal{F}$ and $N$,  such that the error estimate
$$
\|u-u_N\|_{0}\leq CN^{-s}\|\mathcal{F}\|_{\mathcal{H}^{s}(\mathbb{T}^n)}
$$ 
holds.
\end{theorem}}
\textbf{Sketch of proof.}  Using the triangle inequality, we can decompose the numerical error $\|u-u_N\|_0$ into two components: the \emph{truncation error} $\|u-u_T\|_0$ and the \emph{aliasing error} $\|u_T-u_N\|_0$. Here $u_T$ refers to the truncation of the solution $u$ to the \eqref{eqn:elliptic}. The truncation error can be derived from  \Cref{lemma:truncation}  in \Cref{sec:convergece_results}. Subsequently, we establish an upper bound for the aliasing error in \Cref{thm:wn}. Finally, the proof of the main result is completed in  \Cref{sec:convergence_proof}.

\subsubsection{Bounds for truncation and aliasing errors}\label{sec:convergece_results}
Before proving  the main conclusion,  we  present some essential results. \Cref{lemma:truncation}  provides the truncation error estimate of quasipeirodic functions and a rigorous convergence result for the PM, as also presented in  \cite[Theorem 5.1]{jiang2024numerical} and \cite[Theorem 5.3]{jiang2024numerical}, respectively.

{\begin{lemma}\label{lemma:truncation}
For $s\geq0$,  we assume that $f(\bm{x})\in\mathrm{QP}(\mathbb{R}^d)$ and its parent function $\mathcal{F}(\bm{y})\in \mathcal{H}^s(\mathbb{T}^n)$. Then, there exist  constants $C_1, C_2$, independent of $\mathcal{F}$ and $N$, such that
\begin{align}
\label{eqn:truncation}
    \|T_Nf-f\|_{0}\leq C_1N^{-s}|\mathcal{F}|_{\mathcal{H}^s(\mathbb{T}^n)},
    \\
\label{eqn:interpolation}
     \|I_Nf-f\|_0\leq C_2N^{-s}|\mathcal{F}|_{\mathcal{H}^s(\mathbb{T}^n)}.
\end{align}
 \end{lemma}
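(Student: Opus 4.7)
The plan is to reduce both estimates to tail bounds on Fourier coefficients, exploiting the consistency relation $\hat{f}_{\bm{\lambda_k}}=\hat{\mathcal{F}}_{\bm{k}}$ from \Cref{lemma:consistent_fourier_coefficient} together with Parseval's identity \eqref{eqn:parseval}. Once the Fourier side is accessed, the bounds are driven purely by the decay of $\hat{\mathcal{F}}_{\bm{k}}$ encoded in $|\mathcal{F}|_{\mathcal{H}^s(\mathbb{T}^n)}$.

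For the truncation estimate \eqref{eqn:truncation}, I would write
\begin{equation*}
\|T_N f-f\|_0^2 \;=\; \sum_{\bm{k}\notin K_N^n}|\hat{f}_{\bm{\lambda_k}}|^2 \;=\; \sum_{\bm{k}\notin K_N^n}|\hat{\mathcal{F}}_{\bm{k}}|^2,
\end{equation*}
and then observe that every $\bm{k}\notin K_N^n$ has at least one coordinate with $|k_j|\geq N/2$, so $(2|\bm{k}|/N)^{2s}\geq 1$ can be inserted under the sum. This immediately bounds the tail by $(2/N)^{2s}|\mathcal{F}|_{\mathcal{H}^s(\mathbb{T}^n)}^2$, which is the desired $N^{-s}$ estimate.

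For the interpolation estimate \eqref{eqn:interpolation}, I would split $I_Nf - f = (I_Nf - T_Nf) + (T_Nf - f)$ via the triangle inequality, so that the task reduces to controlling the \emph{aliasing error} $\|I_Nf-T_Nf\|_0$. The key first step is to derive the standard aliasing identity on $\mathbb{T}_N^n$,
\begin{equation*}
\widetilde{\mathcal{F}}_{\bm{k}} \;=\; \sum_{\bm{m}\in\mathbb{Z}^n}\hat{\mathcal{F}}_{\bm{k}+N\bm{m}},\quad \bm{k}\in K_N^n,
\end{equation*}
by expanding $\mathcal{F}$ in its Fourier series and applying the discrete orthogonality \eqref{eq:orth}. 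Using Parseval again on $S_N$ and transferring back via \Cref{lemma:consistent_fourier_coefficient}, one obtains
\begin{equation*}
\|I_Nf-T_Nf\|_0^2 \;=\; \sum_{\bm{k}\in K_N^n}\Bigl|\sum_{\bm{m}\neq 0}\hat{\mathcal{F}}_{\bm{k}+N\bm{m}}\Bigr|^2.
\end{equation*}
A weighted Cauchy--Schwarz inequality with weights $|\bm{k}+N\bm{m}|^{\pm s}$ converts the inner sum into the product of $\sum_{\bm{m}\neq 0}|\bm{k}+N\bm{m}|^{-2s}$ and $\sum_{\bm{m}\neq 0}|\bm{k}+N\bm{m}|^{2s}|\hat{\mathcal{F}}_{\bm{k}+N\bm{m}}|^2$; summing the second factor over $\bm{k}\in K_N^n$ reassembles a subsum of $|\mathcal{F}|_{\mathcal{H}^s(\mathbb{T}^n)}^2$.

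The main obstacle is the uniform estimate of the geometric sum $\sum_{\bm{m}\neq 0}|\bm{k}+N\bm{m}|^{-2s}$. For $\bm{k}\in K_N^n$ one has $|\bm{k}+N\bm{m}|\gtrsim N|\bm{m}|$, and a dyadic decomposition of $\mathbb{Z}^n\setminus\{\bm{0}\}$ shows that the sum is $O(N^{-2s})$ uniformly in $\bm{k}$, provided the implicit regularity condition $s>n/2$ holds so that the series converges. Substituting this bound yields $\|I_Nf-T_Nf\|_0\lesssim N^{-s}|\mathcal{F}|_{\mathcal{H}^s(\mathbb{T}^n)}$, and combining with \eqref{eqn:truncation} through the triangle inequality gives \eqref{eqn:interpolation}. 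The rest of the argument is bookkeeping, but care is needed in justifying the exchange of summation orders (by absolute convergence) when deriving the aliasing identity and in tracking the $n$-dependent constant that emerges from the dyadic count.
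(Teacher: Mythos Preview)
The paper does not supply an in-text proof of this lemma; it simply cites \cite[Theorem~5.1, Theorem~5.3]{jiang2024numerical} and treats both estimates as established results. So there is no argument in the present paper to compare against.

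Your outline is the standard route and is essentially correct. The truncation bound via Parseval, \Cref{lemma:consistent_fourier_coefficient}, and the tail inequality $|\bm{k}|\geq N/2$ is exactly how \eqref{eqn:truncation} is obtained in the periodic spectral literature, transported to the quasiperiodic setting by the coefficient identification $\hat f_{\bm{\lambda_k}}=\hat{\mathcal{F}}_{\bm{k}}$. For \eqref{eqn:interpolation}, the aliasing identity $\widetilde{\mathcal{F}}_{\bm{k}}=\sum_{\bm{m}}\hat{\mathcal{F}}_{\bm{k}+N\bm{m}}$ combined with weighted Cauchy--Schwarz is again the classical mechanism, and this is almost certainly what the cited reference does as well.

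Your observation that the Cauchy--Schwarz step needs $\sum_{\bm{m}\neq 0}|\bm{k}+N\bm{m}|^{-2s}<\infty$, hence $s>n/2$, is a genuine and important point: the lemma as stated with ``$s\geq 0$'' is too permissive for the interpolation estimate, since for $s\leq n/2$ the function need not even be continuous on $\mathbb{T}^n$ and pointwise evaluation on $\mathbb{T}_N^n$ is ill-defined. In the applications later in the paper (\Cref{lemma:nabla}, \Cref{thm:wn}, \Cref{thm:convergence_analysis}) one always has $s\geq 1$ with $\mathcal{U}\in\mathcal{H}^{s+1}(\mathbb{T}^n)$, so the restriction is harmless there, but your flagging of it is correct and worth keeping.
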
}

Using \Cref{lemma:truncation}, we can directly derive the estimates for the truncation and interpolation errors of the gradient of quasiperiodic functions, as presented in \Cref{lemma:nabla}.
{\begin{lemma}\label{lemma:nabla}
Let $s\geq 1$. Assume $u\in \mathcal{H}^1_{QP}(\bbR^d), \alpha\in\mathcal{C}^1_{QP}(\mathbb{R}^d)$ and their parent functions $\mathcal{U}\in {\mathcal{H}}^{s+1}(\mathbb{T}^n), \mathcal{A}\in{\mathcal{H}}^{s}(\mathbb{T}^n)$, respectively. Under these assumptions, there exists constants $C'_1, C'_2$, independent of $\mathcal{U}$ and $N$, such that the following inequalities hold
\begin{align}
\label{eqn:nabla1}
    \|T_N(\alpha\nabla u)-\alpha\nabla u \|_{0}\leq C'_1N^{-s}\|\mathcal{U}\|_{\mathcal{H}^{s+1}(\mathbb{T}^n)},
    \\
\label{eqn:nabla2}
 \|I_N(\alpha\nabla u)-\alpha\nabla u \|_{0}\leq C'_2N^{-s}\|\mathcal{U}\|_{\mathcal{H}^{s+1}(\mathbb{T}^n)}.
\end{align}
\end{lemma}}
\begin{proof}
    
where $\widetilde{\nabla}$ is given by $\widetilde{\nabla}=\left(\sum\limits_{i=1}^d p_{i1}\dfrac{\partial}{\partial y_{1}}, \cdots, \sum\limits_{i=1}^d p_{in}\dfrac{\partial}{\partial y_{n}}\right).$
Moreover, since $\mathcal{A}(\bm{y})\in\mathcal{H}^{s}(\mathbb{T}^n)$, there exist a constant $C_{\mathcal{A}}$ such that
$$
|\mathcal{A}|_{\mathcal{H}^s(\mathbb{T}^n)}\leq C_{\mathcal{A}}
$$
holds. Combining this with Lemma 3.2,  we have
\begin{equation}\nonumber
    \begin{aligned}
        \|T_N\phi-\phi\|_{0}
        &\leq C_1N^{-s}\|\Phi\|_{\mathcal{H}^{s}(\mathbb{T}^n)}\\
        &\leq C_1N^{-s}|\mathcal{A}|_{\mathcal{H}^{s}(\mathbb{T}^n)}\|\widetilde{\nabla} \mathcal{U}\|_{\mathcal{H}^{s}(\mathbb{T}^n)}\\
        &\leq C_1N^{-s}C_{\mathcal{A}}\|\widetilde{\nabla} \mathcal{U}\|_{\mathcal{H}^{s}(\mathbb{T}^n)}\\
        &\leq C^*_1 N^{-s}\|\bm{P}\|_\infty\|{\nabla} \mathcal{U}\|_{\mathcal{H}^{s}(\mathbb{T}^n)}\quad\quad (C^*_1=C_1C_{\mathcal{A}})\\
        &\leq C
        _1'N^{-s}\|\mathcal{U}\|_{{\mathcal{H}^{s+1}(\mathbb{T}^n)}}\quad\quad (C'
        _1=C^*_1\|\bm{P}\|_{\infty}),
    \end{aligned}
\end{equation}
where $C'_1, C^*_1, C_1, C_{\mathcal{A}}$ are positive constants, and $\nabla\mathcal{U}=\left(\dfrac{\partial\mathcal{U}}{\partial y_1},\cdots, \dfrac{\partial\mathcal{U}}{\partial y_n}\right)$. This implies that inequality \eqref{eqn:nabla1} holds.\\
Similarly, by using  Lemma 3.2, we have the following estimate
\begin{equation}\nonumber
        \|I_N\phi-\phi\|_{0}\leq C'_2N^{-s}\|\mathcal{U}\|_{{\mathcal{H}^{s+1}(\mathbb{T}^n)}},
\end{equation}
which implies that \eqref{eqn:nabla2} holds. 
\end{proof}

Using \Cref{lemma:truncation} and \Cref{lemma:nabla}, we estimate the upper bound for  $\|u_T-u_N\|_0$ as follows.
{\begin{lemma}\label{thm:wn}
Let $s\geq 1$. Assuming the same conditions as stated in \Cref{thm:convergence_analysis},  we have the following result, there exists a constant $C'$, independent of $\mathcal{F}$ and $N$,  such that the following estimate holds
$$
\|u_T(\bm{x})-u_N(\bm{x})\|_0\leq C'N^{-s}\|\mathcal{F}\|_{\mathcal{H}^{s}(\mathbb{T}^n)}.
$$
\end{lemma}}
\begin{proof}
Define $\psi(\bm{x})=u(\bm{x})-u_T(\bm{x})$ and $w_N(\bm{x})=u_T(\bm{x})-u_N(\bm{x})$. We can rewrite  \eqref{eqn:discrete_elliptic} as 
$$
(\widetilde{\mathcal{L}}w_N, w_N)=((\widetilde{\mathcal{L}}-\mathcal{L})u_T, w_N)+(\mathcal{L}\psi, w_N)+ (f-f_I, w_N),
$$
and have the subsequent inequality 
\begin{equation}\nonumber
	\begin{aligned}
		\gamma_0\|w_N\|_1^2\leq(\widetilde{\mathcal{L}} w_N,w_N)&=((\widetilde{\mathcal{L}}-\mathcal{L})u_T,w_N)+(\mathcal{L}\psi,w_N)+(f-f_I,w_N)\\
		&=T_1+T_2+T_3,
	\end{aligned}
\end{equation}
using the ellipticity of $\widetilde{\mathcal{L}}$. Next, we estimate the  bounds for $T_1, T_2, T_3$, respectively.

(i) For $T_1,$ we set $\phi=\alpha\nabla u_T\in \mathrm{QP}(\mathbb{R}^d).$ Combining with \eqref{eqn:nabla2}, we obtain the  inequality
$$
T_1=(\phi-I_N\phi,\nabla w_N)\leq {C_2}N^{-s}\|\mathcal{U}\|_{\mathcal{H}^{s+1}(\mathbb{T}^n)}\|\nabla w_N\|_0\leq {C_2} N^{-s}\|\mathcal{U}\|_{\mathcal{H}^{s+1}(\mathbb{T}^n)}\|w_N\|_1.
$$

(ii) {For $T_2$, there exists a constant $C_{\gamma}=\gamma_1C_1$ such that}
$$T_2=(\alpha\nabla \psi, \nabla w_N)
        \leq {\gamma_1}\|\nabla \psi\|_0\|\nabla w_N\|_0
		\leq {C_{\gamma}}N^{-s}\|\mathcal{U}\|_{\mathcal{H}^{s+1}(\mathbb{T}^n)}\|w_N\|_1,$$
 using the \eqref{eqn:nabla1} from  \Cref{lemma:nabla}.

(iii) For $T_3,$ we have
$$
T_3=(f-f_I, w_N)\leq \|f-I_Nf\|_{0}\|w_N\|_0\leq {C_2}N^{-s}\|\mathcal{F}\|_{\mathcal{H}^{s}(\mathbb{T}^n)}\|w_N\|_1.
$$
Combining the upper bounds of $T_1, T_2$ and $T_3$ with the regularity condition  \eqref{eqn:regularity}, we obtain 
$$
\gamma_0\|w_N\|_1^2\leq {C''} N^{-s}\|w_N\|_1\|\mathcal{F}\|_{\mathcal{H}^{s}(\mathbb{T}^n)}, 
$$
where {$C''=(C_2+C_\gamma)C^*+C_2$, independent of $\mathcal{F}$ and $N$}. This implies
$$
\|w_N\|_0\leq \|w_N\|_1\leq {C'} N^{-s}\|\mathcal{F}\|_{\mathcal{H}^{s}(\mathbb{T}^n)},
$$
{where $C'=C''/\gamma_0$ is independent of $\mathcal{F}$ and $N$.}
\end{proof}

\subsubsection{Proof of \Cref{thm:convergence_analysis}}\label{sec:convergence_proof}
Applying the triangle inequality, we have
\begin{equation*}
    \begin{aligned}
    \|u(\bm{x})-u_N(\bm{x})\|_0&\leq\|u(\bm{x})-u_T(\bm{x})\|_0+\|u_T(\bm{x})-u_N(\bm{x})\|_0\\
    &:=\|\psi(\bm{x})\|_0+\|w_N(\bm{x})\|_0.
\end{aligned}
\end{equation*}
From the \eqref{eqn:truncation} provided in \Cref{lemma:truncation}, we obtain
  $$
\|\psi\|_0=\|u-u_T\|_0\leq {C_1}N^{-s}|\mathcal{U}|_{\mathcal{H}^{s}(\mathbb{T}^n)}\leq {C_1} N^{-s}\|\mathcal{U}\|_{\mathcal{H}^{s+1}(\mathbb{T}^n)},
 $$
 As for the bound of $\|w_N\|_0$, we can conclude from the \Cref{thm:wn} that
 $$
\|w_N\|_0=\|u_T-u_N\|_0\leq {C'}N^{-s}\|\mathcal{F}\|_{\mathcal{H}^{s}(\mathbb{T}^n)}.
 $$ 
Thus, we obtain the error estimate for the projection method, 
\begin{equation*}
	\begin{aligned}
		\|u-u_N\|_0&\leq\|\psi\|_0+\|w_N\|_0\\
		&\leq {C_1} N^{-s}\|\mathcal{U}\|_{\mathcal{H}^{s+1}(\mathbb{T}^n)}+ {C'} N^{-s}\|\mathcal{F}\|_{\mathcal{H}^{s}(\mathbb{T}^n)}\\
		&\leq C N^{-s}\|\mathcal{F}\|_{\mathcal{H}^{s}(\mathbb{T}^n)},
	\end{aligned}
\end{equation*}
{where the constant $C=C_1C^*+C'$, independent of $\mathcal{F}$ and $N$.}

\begin{remark}
According to our convergence result, if the solution and the corresponding parent function are analytic (i.e., $s = +\infty$), then the convergence rate exceeds any finite power of  in $1/N$. In this case, we conclude that the PM exhibits spectral accuracy.
\end{remark}

\section{Linear system solver}
\label{sec:compressed_storage}

In this section,  we first introduce the \emph{compressed storage method} to leverage the multilevel block circulant structure of matrix $A$, reducing the memory storage requirement for $Q$.
Moreover, we propose the \emph{compressed PCG} (C-PCG) algorithm which incorporates our proposed diagonal preconditioner to accelerate the convergence rate.

 \subsection{ Multi-level block circulant  matrix}

 \label{sec:numerical_methods:matrix_structure}

In this subsection, we analyze the structure of matrix $A\in\mathbb{C}^{D\times D}$ for general case $d\leq n$. 
The structure of $A$ can be depicted by 
\begin{scriptsize}
\begin{equation*}
      \begin{gathered}
                A_{11}^{(1)}\\
                \begin{bNiceMatrix}[name=a]
                    \widetilde{\mathcal{A}}_{(\bm{0}, 0)}&\widetilde{\mathcal{A}}_{(\bm{0}, -1)}& \cdots&\widetilde{\mathcal{A}}_{(\bm{0},1)}\\
                    \widetilde{\mathcal{A}}_{(\bm{0}, 1)}&	\widetilde{\mathcal{A}}_{(\bm{0}, 0)}&\cdots &	\widetilde{\mathcal{A}}_{(\bm{0}, 2)}\\
                    \vdots&\vdots&\cdots&\vdots\\
                    \widetilde{\mathcal{A}}_{(\bm{0}, -1)}&	\widetilde{\mathcal{A}}_{(\bm{0}, -2)}& \cdots&	\widetilde{\mathcal{A}}_{(\bm{0}, 0)}
                \end{bNiceMatrix}
            \end{gathered}
            \begin{matrix}
                \\
            \longrightarrow
        \end{matrix}\ \ 
        \begin{gathered}
            A_{11}^{(2)}\\
            \begin{bNiceMatrix}[name=b]
                A_{11}^{(1)}&A_{12}^{(1)}& \cdots&A_{1N}^{(1)}\\
                A_{1N}^{(1)}&	\dbox{$A_{11}^{(1)}$}&\cdots &A_{1(N-1)}^{(1)}\\
      \vdots&\vdots&\cdots&\vdots\\
                A_{12}^{(1)}&	A_{13}^{(1)}&\cdots &A_{11}^{(1)}
            \end{bNiceMatrix}
            \end{gathered}
        \begin{matrix}
                \underset{\longrightarrow}{...}
            \end{matrix}
            \begin{gathered}
        A\\
        \begin{bNiceMatrix}[name=c]
            A_{11}^{(n-1)}&A_{12}^{(n-1)}&\cdots &A_{1N}^{(n-1)}\\
            A_{1N}^{(n-1)}&	\dbox{$A_{11}^{(n-1)}$}&\cdots &A_{1(N-1)}^{(n-1)}\\
            \vdots&\vdots&\cdots&\vdots\\
            A_{12}^{(n-1)}&	A_{13}^{(n-1)}&\cdots &A_{11}^{(n-1)}
		\end{bNiceMatrix},\end{gathered}
\end{equation*}
\end{scriptsize}
\tikz[remember picture, overlay]
{
\draw[cyan][->][color=black] ($(a-1-1)-(0.6,-0.1)$) to[out=180,in=180] ($(a-2-1)-(0.6,-0.1)$);
\draw[cyan][->][color=black] ($(a-2-1)-(0.60,0)$) to[out=180,in=180] ($(a-3-1)-(0.60,0)$);
\draw[cyan][->][color=black] ($(a-3-1)-(0.60,0)$) to[out=180,in=180] ($(a-4-1)-(0.60,0)$);
\draw[cyan][->][color=black] ($(b-1-1)-(0.40,0)$) to[out=180,in=180] ($(b-2-1)-(0.40,0)$);
\draw[cyan][->][color=black] ($(b-2-1)-(0.40,0)$) to[out=180,in=180] ($(b-3-1)-(0.40,0)$);
\draw[cyan][->][color=black] ($(b-3-1)-(0.40,0.1)$) to[out=180,in=180] ($(b-4-1)-(0.40,0)$);
\draw[cyan][->][color=black] ($(c-1-1)-(0.55,0)$) to[out=180,in=180] ($(c-2-1)-(0.55,0)$);
\draw[cyan][->][color=black] ($(c-2-1)-(0.55,0)$) to[out=180,in=180] ($(c-3-1)-(0.55,0)$);
\draw[cyan][->][color=black] ($(c-3-1)-(0.55,0.1)$) to[out=180,in=180] ($(c-4-1)-(0.55,0)$);
}
where $\bm{0}$ denotes an $(n-1)$-dimensional zero vector. 
To obtain the matrix $A$, the initial step is computing the discrete Fourier coefficients of $\mathcal{A}(\bm{x})$, denoted as $\widetilde{\mathcal{A}}$. Each row of $\widetilde{\mathcal{A}}$ is then cyclically permuted to assemble the first-level block circulant matrix $A^{(1)}$. This matrix is further permuted to recursively construct the $l$-level block circulant matrix $A^{(l)}$ ($2\leq l\leq n$), which consists of $N\times N$ blocks of $A^{(l-1)}$. As a result, $A$ is referred to as an \emph{$n$-level block circulant matrix} and can be conveniently expressed as follows
$$
A=\sum_{\bm{k}\in K_N^n}\widetilde{\mathcal{A}}_{\bm{k}}Z_{N}^{\bm{k}}=\sum_{ k_1\in K_N^1}\cdots\sum_{k_n\in K_N^1}\widetilde{\mathcal{A}}_{(k_1,\cdots,  k_n)}Z_{N}^{k_1}\otimes\cdots\otimes Z_{N}^{k_n},\quad \bm{k}=(k_1,\cdots, k_n),
$$
where
\begin{small}
\begin{equation}\nonumber
\begin{array}{cl}
Z_N^k=\begin{array}{c@{\hspace{-6pt}}l}
\left[
\begin{array}{cccccc}
0&\cdots&1&0&\cdots&0\\
\vdots&&&\ddots&&\vdots\\
\vdots&&&&\ddots&\vdots\\
1&0&&&&1\\
&\ddots&&&&\\
0&\cdots&1&\cdots&\cdots&0\\
\end{array}
\right]_{N\times N}&\begin{array}{l}
\left.\rule{0mm}{10mm}\right\}\overline{k}\\
\\
\\
\\
\end{array}
\end{array}
\\
\begin{array}{l}
\underbrace{\rule{13mm}{0mm}}_{N-\overline{k}}\quad\quad\quad\quad\qquad\quad\ \qquad
\end{array}
\end{array}
\end{equation}
\end{small}
is the \emph{cycling permutation matrix}. {Here, $\overline{k}=k\,(\mathrm{mod}~ N)$} and  $\otimes$ denotes the tensor product. It is worth mentioning that the $N$ matrices $\{Z_N^{k_i}\}$, where $i=1,\cdots,N$, can be regarded as a canonical basis for the linear space of circulant matrices of order $N$.

To enhance our intuitive grasp of matrix $A$, we provide an illustrative example of a two-level block circulant matrix for the case $d=1, n=2$. First, we examine the structures of vectors $\bm{U}$ and $\bm{F}$. Specifically, we have
$$
\bm{U}=(\bm{\mathcal{U}}_0^T~\bm{\mathcal{U}}_1^T~\cdots~\bm{\mathcal{U}}_{\frac{N}{2}-1}^T~\bm{\mathcal{U}}_{-\frac{N}{2}}^T~\cdots~\bm{\mathcal{U}}_{-1}^T)^T,
$$
where
$$
\bm{\mathcal{U}}_{m}=(\widetilde{\mathcal{U}}_{m,0}, \widetilde{\mathcal{U}}_{m,1}, \cdots, \widetilde{\mathcal{U}}_{m,\frac{N}{2}-1}, \widetilde{\mathcal{U}}_{m,-\frac{N}{2}}, \cdots, \widetilde{\mathcal{U}}_{m,-1} )^T,
$$
and

$$
\bm{F}=(\bm{\widetilde{\mathcal{F}}}_0^T~\bm{\widetilde{\mathcal{F}}}_1^T~\cdots~\bm{\widetilde{\mathcal{F}}}_{\frac{N}{2}-1}^T~\bm{\widetilde{\mathcal{F}}}_{-\frac{N}{2}}^T~\cdots~\bm{\widetilde{\mathcal{F}}}_{-1}^T)^T,
$$
where 
$$
\bm{\mathcal{F}}_{m}=(\widetilde{\mathcal{F}}_{m,0}, \widetilde{\mathcal{F}}_{m,1} ,\cdots, \widetilde{\mathcal{F}}_{m,\frac{N}{2}-1}, \widetilde{\mathcal{F}}_{m,-\frac{N}{2}}, \cdots, \widetilde{\mathcal{F}}_{m,-1} )^T.
$$
{Next, based on the discrete scheme \eqref{eqn:n-discrete_equation}, we obtain}
$$
\sum_{k_{\mathcal{U}}\in K_N^1}\widetilde{\mathcal{A}}_{m, k_{\mathcal{A}}}(\bm{Pk}_\mathcal{V})^T(\bm{Pk}_\mathcal{U})\widetilde{\mathcal{U}}_{m, k_{\mathcal{U}}}=\widetilde{\mathcal{F}}_{m, k_{\mathcal{V}}}, \quad k_{\mathcal{A}}= k_{\mathcal{V}} \overset{N}{-} k_{\mathcal{U}},
\quad \bm{k}_{\mathcal{V}}=(m,k_{\mathcal{V}})^T,\quad \bm{k}_{\mathcal{U}}=(m,k_{\mathcal{U}})^T.
$$ 
{Using the tensor-vector-index conversion, we obtain the first level circulant matrix $\bm{\mathcal{A}}_{m}$ defined by}
$$
\bm{\mathcal{A}}_{m}=\left(
\begin{array}{cccccccc}
	\widetilde{{\mathcal{A}}}_{m,0}&\widetilde{{\mathcal{A}}}_{m,-1}&\cdots&\widetilde{{\mathcal{A}}}_{m,-\frac{N}{2}}&\widetilde{{\mathcal{A}}}_{m,\frac{N}{2}-1}&\cdots&\widetilde{{\mathcal{A}}}_{m,1}\\
	\widetilde{{\mathcal{A}}}_{m,1}&\widetilde{{\mathcal{A}}}_{m,0}&\cdots&\widetilde{{\mathcal{A}}}_{m,-\frac{N}{2}+1}&\widetilde{{\mathcal{A}}}_{m,-\frac{N}{2}}&\cdots&\widetilde{{\mathcal{A}}}_{m,2}\\
	\vdots&\vdots& &\vdots&\vdots&\vdots\\
	\widetilde{{\mathcal{A}}}_{m,\frac{N}{2}-1}&\widetilde{{\mathcal{A}}}_{m,\frac{N}{2}-2}&\cdots&\widetilde{{\mathcal{A}}}_{m,0}&\widetilde{{\mathcal{A}}}_{m,-1}&\cdots&\widetilde{{\mathcal{A}}}_{m,-\frac{N}{2}}\\
 \widetilde{{\mathcal{A}}}_{m,-\frac{N}{2}}&\widetilde{{\mathcal{A}}}_{m,\frac{N}{2}-1}&\cdots&\widetilde{{\mathcal{A}}}_{m,1}&\widetilde{{\mathcal{A}}}_{m,0}&\cdots&\widetilde{{\mathcal{A}}}_{m,-\frac{N}{2}+1}\\
    \vdots&\vdots& &\vdots&\vdots&\vdots\\
    \widetilde{{\mathcal{A}}}_{m,-1}&\widetilde{{\mathcal{A}}}_{m,-2}&\cdots&\widetilde{{\mathcal{A}}}_{m,\frac{N}{2}-1}&\widetilde{{\mathcal{A}}}_{m,\frac{N}{2}-2}&\cdots&\widetilde{{\mathcal{A}}}_{m,0}
\end{array}
\right).
$$
{Furthermore, by traversing $m$ over the set $K_N^1$,  we obtain the second level block circulant matrix,}
$$
A=\left(
\begin{array}{cccccccc}
	\bm{\mathcal{A}}_{0}&\bm{\mathcal{A}}_{-1}&\cdots&\bm{\mathcal{A}}_{-\frac{N}{2}}&\bm{\mathcal{A}}_{\frac{N}{2}-1}&\cdots&\bm{\mathcal{A}}_{1}\\
	\bm{\mathcal{A}}_{1}&\bm{\mathcal{A}}_{0}&\cdots&\bm{\mathcal{A}}_{-\frac{N}{2}+1}&\bm{\mathcal{A}}_{-\frac{N}{2}}&\cdots&\bm{\mathcal{A}}_{2}\\
	\vdots&\vdots& &\vdots&\vdots&\vdots\\
	\bm{\mathcal{A}}_{\frac{N}{2}-1}&\bm{\mathcal{A}}_{\frac{N}{2}-2}&\cdots&\bm{\mathcal{A}}_{0}&\bm{\mathcal{A}}_{-1}&\cdots&\bm{\mathcal{A}}_{-\frac{N}{2}}\\
	\bm{\mathcal{A}}_{-\frac{N}{2}}&\bm{\mathcal{A}}_{\frac{N}{2}-1}&\cdots&\bm{\mathcal{A}}_{1}&\bm{\mathcal{A}}_{0}&\cdots&\bm{\mathcal{A}}_{-\frac{N}{2}+1}\\
	\vdots&\vdots& &\vdots&\vdots&\vdots\\
    \bm{\mathcal{A}}_{-1}&\bm{\mathcal{A}}_{-2}&\cdots&\bm{\mathcal{A}}_{\frac{N}{2}-1}&\bm{\mathcal{A}}_{\frac{N}{2}-2}&\cdots&\bm{\mathcal{A}}_{0}\\
\end{array}
\right).
$$
We observe that each matrix block $\bm{\mathcal{A}}_m$ consists of the discrete Fourier coefficients of $\widetilde{\mathcal{A}}_{m, k_\mathcal{A}}$ and exhibits a circulant structure. By assembling these blocks, we complete the construction of the two-level block circulant matrix $A$.

\subsection{Compressed storage}\label{sec:compressed_storage_algorithm}

In this subsection, we introduce the compressed storage method for the stiffness matrix $Q=A\circ W$. There are two steps involved in this method.

\textbf{Step 1: Compression of $A$.} 
As presented in the above subsection, 
the $n$-level block circulant matrix $A$ is recursively generated from discrete Fourier coefficients $\widetilde{\mathcal{A}}$.
This enable us to store only $\widetilde{\mathcal{A}}$ instead of the complete matrix $A$, to significantly reduce the memory usage. 
To achieve this, we utilize the relationship between the discrete Fourier coefficient $\widetilde{\mathcal{A}}_{\bm{k}_{\mathcal{A}}}$ and its corresponding position $(i ,j)$ in $A$, as determined by \eqref{eqn:ij}. By assigning the value of $\widetilde{\mathcal{A}}_{\bm{k}_\mathcal{A}}$ to $A_{ij}$, we can effectively store $D$ discrete Fourier coefficients $\widetilde{\mathcal{A}}_{\bm{k}_{\mathcal{A}}}$ along with $2D$ tensor indices $\bm{k}_{\mathcal{V}}, \bm{k}_{\mathcal{U}}$. This approach  achieves compressed storage for matrix $A$, reducing memory requirements from $\mathcal{O}(D\times D)$ to $\mathcal{O}(D)$.

\textbf{Step 2: Compression of $W$.}  
The formula for $W_{ij}$ is given by $W_{ij}=(\bm{P}\bm{k}_{\mathcal{V}})^{T}(\bm{P}\bm{k}_{\mathcal{U}})$. Since the indices $i$ and $j$ of $W_{ij}$ align with those of $A_{ij}$, we can utilize the tensor indices $\bm{k}_{\mathcal{V}}$ and $\bm{k}_{\mathcal{U}}$ associated with $A_{ij}$ to compute $W_{ij}$. This means that we can use the saved indices $\bm{k}_{\mathcal{V}}, \bm{k}_{\mathcal{U}}$ to compute $W_{ij}$ without requiring additional memory for storing $W$.

The above steps can be summarized in \Cref{alg:compressed} as the “compressed storage method" for $Q=A\circ W$.

\begin{algorithm}[H]
\renewcommand{\algorithmicrequire}{\textbf{Input:}}
\renewcommand{\algorithmicensure}{\textbf{Output:}}
	\caption{Compressed storage method for $Q$}
 \label{alg:compressed}
	\begin{algorithmic}
		\REQUIRE discrete Fourier coefficients $\widetilde{\mathcal{A}}$, tensor indices $\bm{k}_{\mathcal{V}}$, $\bm{k}_{\mathcal{U}}$, projection matrix $\bm{P}$
        \ENSURE $Q_{ij}$
        
          \STATE
           $\bm{k}_{\mathcal{V}}\xrightarrow{\mathcal{C}} i,\quad \bm{k}_{\mathcal{U}}\xrightarrow{\mathcal{C}} j, \quad \bm{k}_{\mathcal{A}}=\bm{k}_{\mathcal{V}}\overset{N}{-}\bm{k}_{\mathcal{U}}$\\
           
          $A_{ij} = \widetilde{\mathcal{A}}_{\bm{k}_{\mathcal{A}}}$\\

          $W_{ij} = (\bm{P}\bm{k}_{\mathcal{V}})^{T}(\bm{P}\bm{k}_{\mathcal{U}})$\\

          $Q_{ij} = A_{ij} W_{ij}$
	\end{algorithmic}
\end{algorithm}

\begin{remark}\label{lemma:memory_requirement}
The memory requirement for the compressed storage method is reduced from $\mathcal{O}(D^2)$ to $\mathcal{O}(D)$.
\end{remark}

Furthermore, in preparation for the subsequent improved iterative solver, we introduce the set $\mathcal{Z}$ that comprises all non-zero elements of $Q$,
$$
 \mathcal{Z}=\left\{Q_{ij}: Q_{ij}\neq 0, ~ i,j=1,\cdots, D \right\}.
$$
We denote the compressed format of $Q$ as $\widetilde{Q}=(i, j, Q_{ij})_{Q_{ij}\in\mathcal{Z}}$. Going forward, all matrix-vector multiplications involving $Q$ will be replaced by $\widetilde{Q}$. This technique is referred to as \emph{compressed matrix-vector multiplication}.

\subsection{Preconditioner and C-PCG}
\label{sec:diag_pre_compressed_multiplication}
In this subsection, we propose an efficient  iterative approach based on conjugate gradient (CG) method to solve linear system \eqref{eqn:linear_system}. 
As \cref{tab:elliptic:two_modes:PMcond} shows, 
the linear system becomes ill-conditioned with an increase of discrete points. Therefore, it is crucial to design efficient preconditioners to reduce the condition number. Here,  we propose a diagonal preconditioner by the following matrix minimization problem \eqref{eq:precond}
\begin{align}\label{eq:precond}
M= \argmin_{\mathcal{D}\in\mathscr{D}(D)}\|Q\mathcal{D}-I_D\|_{F},
\end{align}
{where $\mathcal{D}$ is a diagonal matrix of order $D$.} The optimization problem can be solved analytically.
\begin{theorem}
The solution of \eqref{eq:precond}  is
\begin{equation}\label{eqn:preconditioner}
    M=\operatorname{diag}\big(q_{11}/\|Q \bm{e}_1\|_{2}^{2},\cdots, q_{DD}/\|Q \bm{e}_{D}\|_{2}^{2}\big).
\end{equation}
\end{theorem}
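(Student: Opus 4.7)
The plan is to exploit the fact that minimizing the Frobenius-norm residual over diagonal matrices decouples into $D$ independent scalar least-squares problems, one per diagonal entry. First I would write $\mathcal{D} = \operatorname{diag}(d_1,\ldots,d_D)\in\mathscr{D}(D)$ and observe that the $j$-th column of $Q\mathcal{D}$ is simply $d_j\, Q\bm{e}_j$. Since the columns of $I_D$ are the canonical basis vectors $\bm{e}_j$, the squared Frobenius norm splits as
\begin{equation*}
\|Q\mathcal{D}-I_D\|_F^2 \;=\; \sum_{j=1}^D \bigl\|d_j Q\bm{e}_j - \bm{e}_j\bigr\|_2^2.
\end{equation*}

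Next I would solve each scalar problem $\min_{d_j\in\mathbb{C}} \|d_j Q\bm{e}_j - \bm{e}_j\|_2^2$ separately. Expanding the norm using the Hermitian inner product gives a strictly convex quadratic in $d_j$:
\begin{equation*}
\phi_j(d_j) \;=\; |d_j|^2\,\|Q\bm{e}_j\|_2^2 \;-\; d_j\,\overline{q_{jj}} \;-\; \overline{d_j}\,q_{jj} \;+\; 1,
\end{equation*}
where I use $\bm{e}_j^{*}Q\bm{e}_j = q_{jj}$. Setting the Wirtinger derivative $\partial\phi_j/\partial \overline{d_j}$ to zero yields the unique minimizer $d_j = q_{jj}/\|Q\bm{e}_j\|_2^2$, which assembles into the diagonal matrix $M$ in \eqref{eqn:preconditioner}.

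Collecting the per-column minimizers gives the announced formula, and since the objective is the sum of strictly convex quadratics this minimizer is globally unique. The main point requiring justification is that $\|Q\bm{e}_j\|_2 \neq 0$ for every $j$, so the scalar problems are well posed; this follows from the invertibility of $Q$, which is inherited from the well-posedness of the discrete variational formulation established in \Cref{thm:pm_wellposed}. I expect this non-degeneracy check to be the only non-routine ingredient; the remainder of the argument is essentially a bookkeeping exercise in the Frobenius norm.
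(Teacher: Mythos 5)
Your proof takes essentially the same route as the paper's: both decompose $\|Q\mathcal{D}-I_D\|_F^2$ column-by-column and minimize each resulting scalar quadratic in isolation. The paper does the decomposition entry-wise and treats $S_j=\big(\sum_i q_{ij}^2\big)m_{jj}^2-2q_{jj}m_{jj}+1$ as a real quadratic; you phrase it in terms of $\|d_j Q\bm{e}_j-\bm{e}_j\|_2^2$ and Wirtinger calculus, which is a cleaner way to say the same thing in $\mathbb{C}$.

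Two small points. First, your Hermitian expansion has the cross terms reversed: one gets $\phi_j(d_j)=|d_j|^2\|Q\bm{e}_j\|_2^2-\overline{d_j}\,\overline{q_{jj}}-d_j\,q_{jj}+1$, whose Wirtinger stationarity condition yields $d_j=\overline{q_{jj}}/\|Q\bm{e}_j\|_2^2$. The paper's stated formula with $q_{jj}$ in the numerator therefore implicitly uses that $q_{jj}$ is real, which does hold here because $q_{jj}=\widetilde{\mathcal{A}}_{\bm{0}}\,|\bm{P}\bm{k}_j|^2$ is the product of the (real) mean of $\alpha$ and a nonnegative quantity; worth saying. Second, your non-degeneracy justification via $Q$ being invertible is the right instinct but overstates \Cref{thm:pm_wellposed}: that theorem gives coercivity on $\overline{V}^N$ (mean-zero modes), while the full matrix $Q=A\circ W$ has an identically zero row and column at the index with $\bm{k}_{\mathcal{U}}=\bm{0}$ (since $W_{ij}=(\bm{P}\bm{k}_{\mathcal{V}})^T(\bm{P}\bm{k}_{\mathcal{U}})$ vanishes there), so $\|Q\bm{e}_j\|_2=0$ for that one index and the formula \eqref{eqn:preconditioner} is $0/0$. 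The proper statement is that the formula applies to the indices with $\bm{k}\neq\bm{0}$, where well-posedness on $\overline{V}^N$ indeed guarantees the corresponding columns are nonzero; the paper glosses over this entirely, so flagging it is a genuine improvement even if your exact phrasing needs tightening.
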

\begin{proof}
Given matrices $Q=(q_{ij})\in\mathbb{C}^{D\times D}$ and $M=\operatorname{diag}(m_{11},\cdots,m_{DD})\in\mathscr{D}(D)$, 
we can compute  $\|QM-I_D\|_F$ as follows
\begin{align*}
	\|QM-I_D\|_F^2&=	(q_{11}m_{11}-1)^2+(q_{12}m_{22})^2+\cdots+(q_{1D}m_{DD})^2\\
 &\qquad\qquad\qquad\qquad\qquad\quad\cdots\\
	&+(q_{D1}m_{11})^2+(q_{D2}m_{22})^2+\cdots+(q_{DD}m_{DD}-1)^2.
\end{align*}
The $j$-th column summation $S_j
=\big(\sum_{i=1}^{D}q_{ij}^2\big)m_{jj}^2-2q_{jj}m_{jj}+1$.
Obviously, $S_j$ is a quadratic function with respect to $m_{jj}$ and it attains its minimum if and only if $
m_{jj}= q_{jj}/\big(\sum_{i=1}^{D}q_{ij}^2 \big)= q_{jj} / \|Q 
\bm{e}_j\|_{2}^{2}.
$
\end{proof}

\begin{proposition}
    Suppose $\widetilde{\mathcal{A}}$ has $g$ non-zero elements. In this case, each row (or column) of $Q$ consists of $\mathcal{O}(g)$ non-zero elements. Therefore, the computational complexity for constructing $M$ amounts to $\mathcal{O}(gD)$.
\end{proposition}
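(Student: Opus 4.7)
The plan is to prove the two assertions in sequence. First I would establish the sparsity claim about the rows/columns of $Q$, then leverage it with the closed-form expression \eqref{eqn:preconditioner} for $M$ to bound the construction cost.

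For the sparsity claim, recall from \Cref{sec:numerical_methods:derivation} that $Q = A \circ W$ with $A_{ij} = \widetilde{\mathcal{A}}_{\bm{k}_{\mathcal{V}} \overset{N}{-} \bm{k}_{\mathcal{U}}}$, where the indices $i,j$ are the tensor-vector conversions of $\bm{k}_{\mathcal{V}}, \bm{k}_{\mathcal{U}}$ via \eqref{eqn:ij}. Fix a row index $i$, which corresponds to a fixed $\bm{k}_{\mathcal{V}}$. Then $A_{ij} \neq 0$ if and only if the shift $\bm{k}_{\mathcal{A}} = \bm{k}_{\mathcal{V}} \overset{N}{-} \bm{k}_{\mathcal{U}}$ indexes one of the $g$ non-zero discrete Fourier coefficients of $\widetilde{\mathcal{A}}$. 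Since the map $\bm{k}_{\mathcal{U}} \mapsto \bm{k}_{\mathcal{V}} \overset{N}{-} \bm{k}_{\mathcal{U}}$ is a bijection on $K_N^n$ (via cyclic shift modulo $N$), there are at most $g$ choices of $\bm{k}_{\mathcal{U}}$ producing a non-zero $A_{ij}$, so each row of $A$ has at most $g$ non-zeros. Taking the Hadamard product with $W$ can only preserve or annihilate entries, so each row of $Q$ has $\mathcal{O}(g)$ non-zeros; the same argument applied with $\bm{k}_{\mathcal{U}}$ fixed yields the column bound.

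For the complexity of constructing $M$, I would use the explicit formula \eqref{eqn:preconditioner}: $M = \operatorname{diag}(q_{11}/\|Q\bm{e}_1\|_2^2, \cdots, q_{DD}/\|Q\bm{e}_D\|_2^2)$. Each diagonal entry $m_{jj}$ requires (i) retrieving $q_{jj}$, which is $\mathcal{O}(1)$ via \Cref{alg:compressed}, and (ii) computing $\|Q\bm{e}_j\|_2^2 = \sum_{i=1}^D |q_{ij}|^2$, which by the column-sparsity bound involves summing $\mathcal{O}(g)$ non-zero squared entries (each obtainable in $\mathcal{O}(1)$ from the compressed format $\widetilde{Q}$). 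Hence one $m_{jj}$ costs $\mathcal{O}(g)$, and summing over the $D$ columns gives total cost $\mathcal{O}(gD)$.

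The only mildly subtle point will be making the bookkeeping of the column-sparsity rigorous in the compressed-storage setting — specifically, confirming that enumerating the non-zero entries of a given column of $Q$ can indeed be done in $\mathcal{O}(g)$ rather than by scanning all $D$ entries. This follows because in the compressed format $\widetilde{Q} = (i,j,Q_{ij})_{Q_{ij}\in\mathcal{Z}}$ the non-zero entries are indexed by the at most $g$ admissible shifts $\bm{k}_{\mathcal{A}}$, so iterating over shifts (rather than over all rows) immediately produces the column's non-zeros in $\mathcal{O}(g)$ time. With this observation the proposition follows.
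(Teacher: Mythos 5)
The paper states \Cref{lemma:preconditioner_construct} without giving any proof, treating the sparsity count and the $\mathcal{O}(gD)$ bound as immediate from the construction. Your argument supplies exactly the reasoning that the paper leaves implicit: fixing a row $i$ (i.e.\ fixing $\bm{k}_{\mathcal{V}}$), the map $\bm{k}_{\mathcal{U}} \mapsto \bm{k}_{\mathcal{V}} \overset{N}{-} \bm{k}_{\mathcal{U}}$ is a bijection of $K_N^n$, so at most $g$ columns $j$ give $A_{ij}\neq 0$; the Hadamard product with $W$ can only annihilate entries; and each $\|Q\bm{e}_j\|_2^2 = \sum_i |q_{ij}|^2$ is then a sum of $\mathcal{O}(g)$ terms retrievable in $\mathcal{O}(1)$ from the compressed format, giving $\mathcal{O}(g)$ per diagonal entry and $\mathcal{O}(gD)$ total. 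This is correct, and your final observation---that one enumerates a column's non-zeros by iterating over the $g$ admissible shifts $\bm{k}_{\mathcal{A}}$ and setting $\bm{k}_{\mathcal{V}} = \bm{k}_{\mathcal{A}} \overset{N}{+} \bm{k}_{\mathcal{U}}$, rather than scanning all $D$ rows---is the one step that genuinely needs to be said to make the complexity bound honest; without it one would naively get $\mathcal{O}(D^2)$. Your proof is complete and is almost certainly the intended argument behind the unproved proposition.
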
\label{lemma:preconditioner_construct}

To demonstrate the effectiveness of the diagonal preconditioner $M$, we perform a numerical test with $d=1, n=2$. The implementation of this case can be found in  \Cref{sec:numerical_experiments:elliptic:two_modes}.
As illustrated in \Cref{tab:elliptic:two_modes:PMcond}, the condition number of $Q$ exhibits a substantial increase  as $N$ grows. However, after applying the preconditioner $M$, the condition number undergoes a significant reduction and stabilizes within a very narrow range.
\begin{table}[H]
	\centering
 \footnotesize{
	\begin{tabular}{|c|c|c|c|c|c|c|}
		\hline
		$N$  &4&8&16&32&64&128\\
		\hline
 $Q$&1.77e+01&7.31e+01&2.97e+02&1.19e+03&4.81e+03&1.93e+04\\
		\hline
		$QM$&2.44&2.48&2.44&2.49&2.49&2.48\\
		\hline	
	\end{tabular}
 }
    \caption{Condition numbers of $Q$ and the preconditioned system $QM$.}
    \label{tab:elliptic:two_modes:PMcond}
\end{table}

Moreover, we use the compressed matrix-vector multiplication to accelerate the iteration process further in iterative schemes like PCG. This improved PCG method, referred to as compressed PCG (C-PCG) method, is outlined in $\Cref{alg:c-pcg}$.
\begin{algorithm}[H]
	\caption{C-PCG method}
        \label{alg:c-pcg}
	\begin{algorithmic}
		\STATE	Given initial guess vector $\bm{U}_0$ and compute the residual $\bm{r}_0=\bm{F}-\widetilde{Q}\bm{U}_0$
		
		\STATE Solve residual equation $M\bm{z}_0=\bm{r}_0,$ set $\bm{h}_0 = \bm{z}_0$
		
		\FOR{$j=0,1,\cdots,$ until convergence}
		\STATE $\alpha_j=\frac{(\bm{r}_j,\bm{z}_j)}{\widetilde{Q}\bm{h}_j,\bm{h}_j}$\\
		\STATE $\bm{U}_{j+1}=\bm{U}_j+\alpha_j\bm{h}_j$\\
		\STATE $\bm{r}_{j+1}=\bm{r}_j-\alpha_{j}\widetilde{Q}\bm{h}_j$\\
		\STATE Solve $M\bm{z}_{j+1}=\bm{r}_{j+1}$\\
		\STATE $\beta_j=\frac{(\bm{r}_{j+1},\bm{z}_{j+1})}{\bm{r}_j,\bm{z}_j}$\\
		\STATE $\bm{h}_{j+1}=\bm{z}_{j+1}+\beta_j\bm{h}_j$\\
  \ENDFOR
	\end{algorithmic}
\end{algorithm}
\begin{remark}
    Compared to standard PCG, C-PCG uses the compressed form $\widetilde{Q}$ instead of $Q$ to enhance matrix-vector multiplication, reducing the computational complexity from $\mathcal{O}(D\times D)$ to $\mathcal{O}(gD)$, equivalent to the costs of compressed matrix-vector multiplication. Moreover, it is worth noting  that we can substitute the PCG method with other iterative approaches if desired.
\end{remark}


\section{Numerical experiments}
\label{sec:numerical_experiments}

In this section, we present two classes of numerical experiments to demonstrate the performance of our algorithm. Firstly, we  employ the PM to solve the quasiperiodic elliptic equation \eqref{eqn:elliptic} and compare its efficiency with the PAM. We also compare the performance of PCG and C-PCG in solving the linear system discretized by PM. Consequently, we focus on the quasiperiodic homogenization problem and demonstrate the accuracy and efficiency of our algorithm in calculating the homogenized coefficients. These experiments  are conducted using MATLAB R2017b on a laptop computer equipped with an Intel Core 2.50GHz CPU and 4GB RAM. The computational time, measured in seconds(s), is referred to as CPU time.

\subsection{Quasiperiodic elliptic equations}
\label{sec:numerical_experiments:elliptic}

In this subsection,  we present two examples to demonstrate the effectiveness of the proposed algorithm. In the first example, the spectral points of the elliptic coefficient and the solution are two separated incommensurate frequencies, namely $1$ and $\sqrt{2}$. We begin with implementing the PM to solve the quasiperiodic elliptic equation \eqref{eqn:elliptic} using the C-PCG method. Our numerical experiments involve comparing the CPU time and memory usage between standard PCG and C-PCG  to demonstrate the high efficiency of C-PCG. Furthermore, we give a comparative analysis of the accuracy and efficiency of PM against the PAM, emphasizing the advantages of PM in solving quasiperiodic elliptic equations, particularly in avoiding the Diophantine approximation error. The second example is more complex,
its spectral point set of solution consists of linear combinations of $1$ and $\sqrt{2}$. More examples can be found in \cite[pp.~20--23]{jiang2024projection}.

In what follows, we denote the numerical solutions by $u_N$ and measure the numerical error in the $\mathcal{L}^2_{QP}(\mathbb{R}^d)$-norm as $e_N = \|u_N - u\|_{\mathcal{L}^2_{QP}(\mathbb{R}^d)}$, where $u$ is the exact solution of \eqref{eqn:elliptic}. The order of convergence is calculated by $\kappa=\ln(e_{N_1}/e_{N_2})/ \ln (N_1/ N_2)$.


\subsubsection{Two separated incommensurate frequencies}
\label{sec:numerical_experiments:elliptic:two_modes}

In the first test, we aim to demonstrate the accuracy and efficiency of the PM together with C-PCG iterative solver. For the quasiperiodic elliptic equation \eqref{eqn:elliptic}, we choose the coefficient $\alpha_1(x) = \cos(2\pi x) + \cos(2\pi\sqrt{2}x) + 6$ and the exact solution $u_1(x) = \sin(2\pi x) + \sin(2\pi\sqrt{2}x)$. 

\textbf{C-PCG \textit{vs.} PCG in PM scheme.} Within the PM framework, we compare the efficiency, memory usage, and condition number of  PCG and C-PCG. 
\Cref{tab:elliptic:two_modes:PMcond} has presented the condition number of $Q$ and $QM$, and shown that our proposed diagonal preconditioner $M$ is very efficient. 
\Cref{tab:elliptic:two_modes:PMefficiency} compares the efficiency of PCG with C-PCG. We can find that the C-PCG can significantly reduce the CPU time compared with the PCG. As $N$ grows, the reduction in CPU time diminishes rapidly. Meanwhile, the C-PCG achieves significant memory savings, as shown in \Cref{tab:elliptic:two_modes:PMmemory}. We denote the memory usage (in units of Gb) of PCG and C-PCG as $M_{\mathrm{PCG}}$ and $M_{\mathrm{C-PCG}}$, respectively, with the memory ratio $r = M_{\mathrm{PCG}}/M_{\mathrm{C-PCG}}$ is approximately $\mathcal{O}(D)$. 


\begin{table}[!hbpt]
	\centering
 \footnotesize{
	\begin{tabular}{|c|c|c|c|c|c|c|}
		\hline
		$N$  &4&8&16&32&64&128\\
		\hline
		 CPU time(PCG)&0.05&0.78&9.63&135.60&-&-\\

		\hline
		CPU time(C-PCG)&0.02&0.07&0.27&1.04&4.89&17.32\\
		\hline	
		Iteration(PCG) & 13& 19&19&19&-&-\\
		\hline
		Iteration(C-PCG) & 13& 19&19&19&19&19\\
		\hline
	  $ e_N$(PCG)&6.24e-02&3.02e-16&5.09e-16&5.66e-16&-&-\\
        \hline
		$ e_N$(C-PCG)&6.24e-02&3.02e-16&5.10e-16&5.66e-16&4.35e-16&3.62e-16\\
       \hline

	\end{tabular}
 }
   \caption{
Efficiency comparison of PCG  and C-PCG  when solving \eqref{eqn:elliptic} with ($\alpha_1(x), u_1(x)$) in PM (Data for $N\geq 64$ with  PCG is not available due to insufficient memory).}\label{tab:elliptic:two_modes:PMefficiency}
\end{table}

\begin{table}[!hbpt]
	\centering
 \footnotesize{

	\begin{tabular}{|c|c|c|c|c|c|c|}
		\hline
		$N$  &4&8&16&32&64&128\\
		\hline
		 $M_{\mathrm{PCG}}$ &1.00e-03&1.60e-02&2.50e-01&4.00e+00&6.40e+01&1.02e+03\\

		\hline
		$M_{\mathrm{C-PCG}}$ &6.11e-05&2.44e-04&9.77e-04&3.90e-03&1.56e-02&6.25e-02\\
		\hline	
		$r$ &1.60e+01&6.40e+01&2.56e+02&1.02e+03&4.10e+03&1.64e+05\\
        \hline
	\end{tabular}
 }
    \caption{{Comparison of memory usage between full storage ($M_{\mathrm{PCG}}$) and compressed storage ($M_{\mathrm{C-PCG}}$) for the stiffness matrix $Q$ generated by $\alpha_1$. }}\label{tab:elliptic:two_modes:PMmemory}
 \end{table}

\textbf{PM \textit{vs.} PAM.} We provide a comprehensive comparison between the PM and the PAM in solving \eqref{eqn:elliptic}. The implementation of PAM can refer to \cite{jiang2014numerical, jiang2023approximation}. We use periodic functions $\alpha_{1p}(x) = \cos(2\pi x) + \cos(2\pi([\sqrt{2}L]/L) x) + 6$ with varying $L$, representing the length of the computational area, to approximate the quasiperiodic coefficient $\alpha_1(x) = \cos(2\pi x) + \cos(2\pi\sqrt{2} x) + 6$. In PAM, we choose $E = L\times N$ discrete points to ensure enough numerical accuracy of discretizing \eqref{eqn:elliptic} with $N=16$. For convenience, we continue to use $e_N$ to denote the numerical error of PAM. We only present the results when
$L = 2, 5, 12, 29, 70, 169, 408 $ for PAM. \Cref{fig:elliptic:dioerr} shows the Diophantine approximation error $|L\sqrt{2}-[L\sqrt{2}]|$ with respect to $L$, intending to explain why these particular $L$ are chosen for approximation.
Our tests not only involve computing the numerical error $e_N$, but also giving a qualitative relationship between the numerical error and the Diophantine approximation error, as presented in \Cref{tab:elliptic:two_modes:PAMerr}. As we can see, $e_N$  is mainly controlled by the Diophantine approximation error. 
Furthermore, we  fix $L$ and gradually increase {$N= 2^l, l \in \mathbb{N}^+$}.  In  \Cref{tab:elliptic:two_modes:PAMvaryN}, we observe that PAM fails to achieve improved accuracy even with a significantly higher number of discrete points. The numerical error stagnates once $N$ exceeds a certain value. 
More theoretical results on function approximation theory in the context of PAM can refer to \cite{jiang2023approximation}.

\begin{table}[!hbpt]
	\centering
 \footnotesize{
	\begin{tabular}{|c|c|c|c|c|c|c|c|}
		\hline
        $L$&$2$&$5$&$12$&$29$&$ 70$&$169$&$ 408$\\
		\hline
    $[\sqrt{2}L]/L$&3/2&7/5&17/12&41/29&99/70&239/169&577/408\\
		\hline
		
		$|L\sqrt{2}-[L\sqrt{2}]|$&1.72e-01&7.11e-02&2.94e-02&1.22e-02&5.10e-03&2.10e-03&8.67e-04\\
		\hline
		$ e_N$&2.12e-01&9.18e-02&3.77e-02&1.56e-02&6.50e-03&2.70e-03&1.10e-03\\
		\hline
	\end{tabular}
	}
   \caption{ Numerical error of PAM in solving \eqref{eqn:elliptic} with $\alpha_{1p}(x)$ and corresponding Diophantine approximation  error against $L$.   }\label{tab:elliptic:two_modes:PAMerr}
   \end{table}

   \begin{figure}[!hbpt]
     \centering
     \includegraphics[width=10cm]{./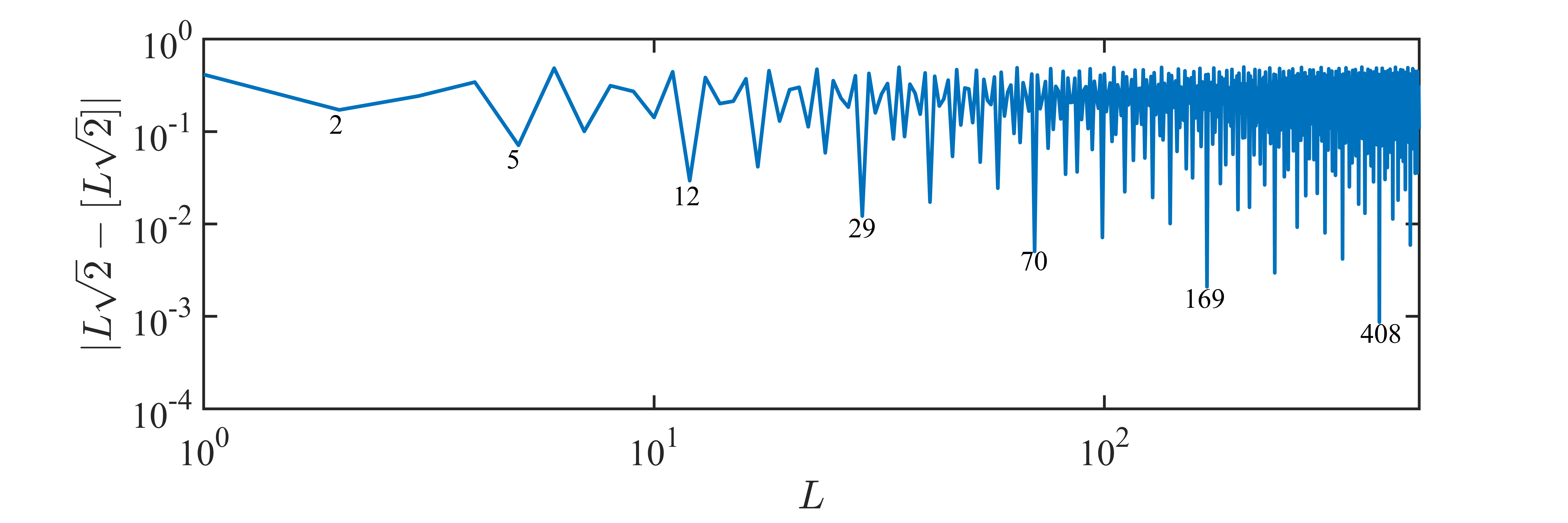}
     \caption{Diophantine approximation error of $\sqrt{2}$. }
     \label{fig:elliptic:dioerr}
 \end{figure}

 \begin{table}[!hbpt]
	\centering
 \footnotesize{
	\begin{tabular}{|c|c|c|c|c|c|}
		\hline
		$e_N$&$l=2$&$l=3$&$l=4$&$l=5$ &$l=6$\\
		\hline
		 $ L=2$&5.55e-01&2.17e-01&2.12e-01&2.11e-01&2.11e-01\\
		 \hline
		 $L=5$&5.41e-01&9.17e-02&9.18e-02&9.18e-02&9.18e-02\\
		 \hline
		 $L=12$&5.27e-01&3.77e-02&3.77e-02&3.77e-02&3.77e-02\\
		 \hline
		 $L=29$&5.07e-01&1.56e-02&1.56e-02&1.56e-02&1.56e-02\\
		 \hline
	\end{tabular}
 }
    \caption{Numerical error with variable $E=L\times 2^l$ for different $L$ in PAM.  }\label{tab:elliptic:two_modes:PAMvaryN}
\end{table}

We then compare the efficiency of PM and PAM. \Cref{tab:elliptic:two_modes:PMPAMcompare} presents the accuracy achieved by these two methods, along with their required CPU time.  It is evident that PM avoids the impact of Diophantine approximation errors and achieves machine precision in a short amount of time. In contrast, PAM, even with much larger CPU time, only achieves relatively low accuracy, which is dependent on the Diophantine approximation error.

\begin{table}[!hbpt]
	\centering
 \footnotesize{
	\begin{tabular}{|c|c|c|c|c|}
		\hline
		&$N$&4&8&16\\
		\hline
		\multirow{4}{*}{$e_N$} & \mbox{PM} & 6.24e-02 & 3.02e-16 & 5.10e-16\\
		\cline{2-5}
		&\mbox{PAM($L=70$)}& 5.02e-01 & 6.50e-03 & 6.50e-03\\
		&\mbox{PAM($L=169$)}& 4.98e-01 & 2.70e-03 & 2.70e-03\\
		&\mbox{PAM($L=408$)}& 3.97e-01 & 1.10e-03 & 1.10e-03\\
		\hline
		\multirow{4}{*}{\mbox{CPU time}}&\mbox{PM} & 1.77e-02 & 7.08e-02 & 1.73e-01\\
		\cline{2-5} 
		& \mbox{PAM($L=70$)} &7.23e-02 &2.53e-01& 6.52e-01\\
		& \mbox{PAM($L=169$)} &1.42e-01 &5.78e-01& 1.53\\

		& \mbox{PAM($L=408$)} &2.56e-01 &1.60& 5.30\\
		\hline
		
	\end{tabular}
 }
  \caption{{Efficiency comparison between PM and PAM with different periods $L=70, 169, 408$  in solving  \eqref{eqn:elliptic} with $\alpha_1(x)$. }}\label{tab:elliptic:two_modes:PMPAMcompare}
 \end{table}


 
\subsubsection{Combination of two incommensurate frequencies}
\label{sec:numerical_experiments:elliptic:linear_combination}

To demonstrate the {polynomial accuracy} of the PM  and highlight the high efficiency of C-PCG further, we design a more complex example where the spectral points of solution $u$ are formed by a linear combination of 1 and $\sqrt{2}$. We adopt the  coefficient $\alpha_1(x)$ and the exact solution $u_2(x)$ in the following form
$$
u_2(x)=\sum_{ \lambda\in\Lambda_K}\widetilde{u}_\lambda e^{\imath 2\pi \lambda x},\quad \widetilde{u}_\lambda=e^{-(|k_1|+|k_2|)},\quad \Lambda_K =\left\{\lambda=k_1 +k_2\sqrt{2}:-32\leq k_1, k_2< 32\right\}. 
$$

Similar to the first test, we evaluate the performance of PCG and C-PCG by comparing  their numerical error and CPU time.  \Cref{tab:elliptic:linear_combination:PMefficiency} once again demonstrates that C-PCG solves \eqref{eqn:elliptic} more efficiently. Furthermore, thanks to the memory savings of C-PCG, we observe the exponential decay of $e_N$ to machine precision, sufficiently covering all non-zero values of 
$\widetilde{u}_{\lambda}$. Additionally, the error order $\kappa$  increases with $N$.

As illustrated in \Cref{fig:elliptic:covergence_plot}, the error consistently decreases as $N$ doubles. In this log-log convergence plot, the magnitude of the slope increases steadily, indicating an accelerating rate of error decay. This behavior suggests that the error order $\kappa$ improves with increasing $N$. Once $N$ becomes sufficiently large to capture all significant non-zero values of $\widetilde{u}_{\lambda}$, the PM achieves machine precision, as evidenced by the flattening of the error curve at the end. Since all functions involved in the test are analytic (i.e., $s = +\infty$ in \Cref{thm:convergence_analysis}), the PM exhibits spectral accuracy in this example. These results are consistent with, and further support, our theoretical convergence analysis.

\begin{table}[!hbpt]
	\centering
 \footnotesize{
	\begin{tabular}{|c|c|c|c|c|c|c|}
		\hline
		$N$  &4&8&16&32&64&128\\
			\hline
		 $ e_N$ (PCG)&2.73&2.02e-01&5.72e-04&8.32e-08&-&-\\
		
		\hline
		 $ e_N$(C-PCG)&2.73&2.02e-01&5.72e-04&8.32e-08&2.42e-15&2.46e-15\\
   \hline
        $\kappa$&-&3.75 & 8.46& 12.75&18.75&-\\
		 \hline
		 CPU time(PCG)&2.80e-03&4.26e-02&7.16e-01&1.13e+01&-&-\\
		 \hline
		 CPU time(C-PCG)&7.08e-04&3.01e-03&1.12e-02&4.84e-02&1.57e-01&5.37e-01\\
		 \hline
		 Iteration (PCG)& 14&19&18&18&-&-\\
		 \hline
		 Iteration (C-PCG)& 14&19&18&18&18&18\\
		\hline
	\end{tabular}
 	\caption{ 
Efficiency comparison of PCG   and C-PCG  when solving \eqref{eqn:elliptic} with ($\alpha_1(x), u_2(x)$) in PM (Data for $N\geq 64$ with  PCG  is not available due to insufficient memory).}\label{tab:elliptic:linear_combination:PMefficiency}}
\end{table}

\begin{figure}[!hbpt]
      \centering
      \includegraphics[width=7cm]{./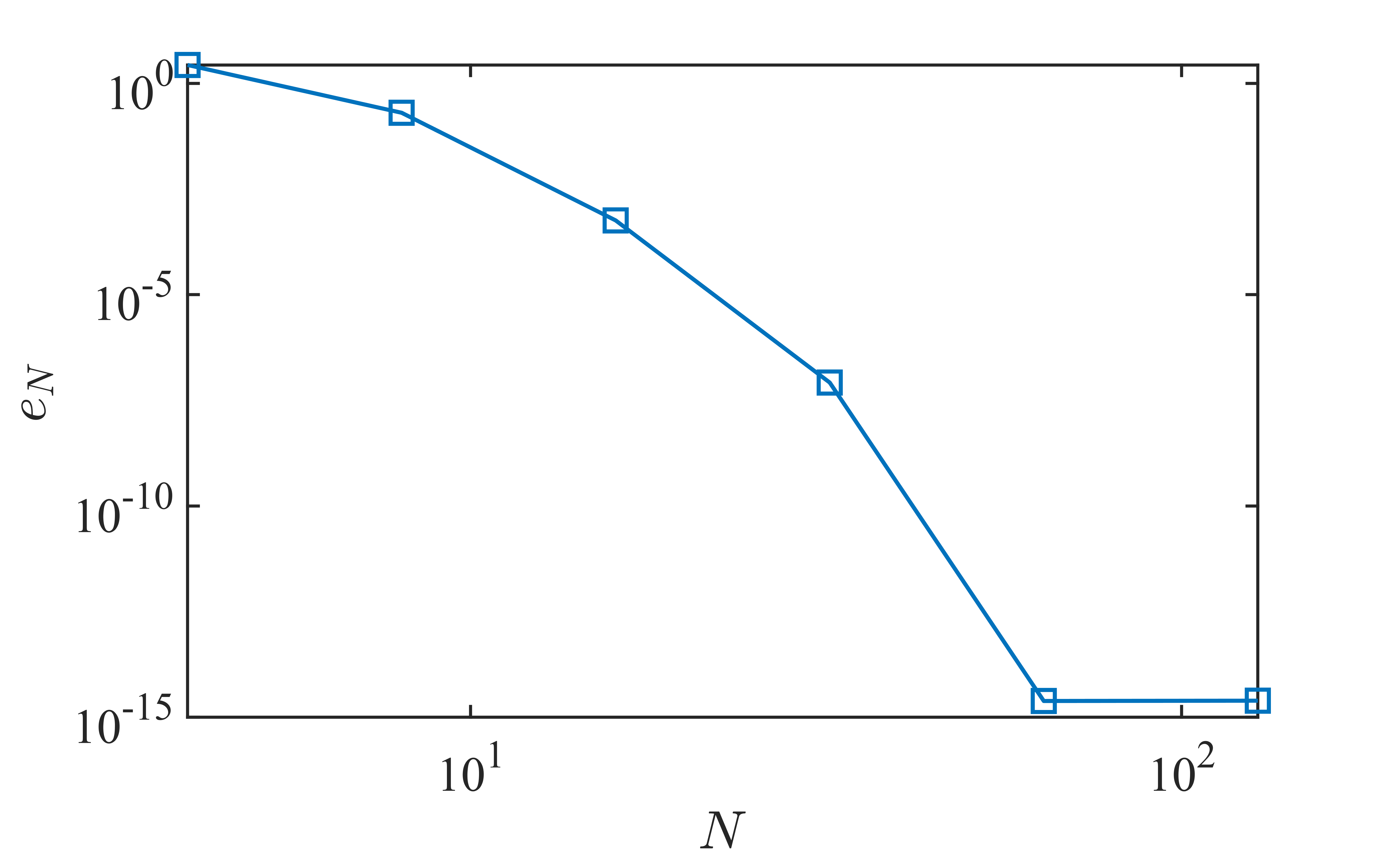}
      \caption{{The error  $e_N$ against $N$ with $(\alpha_1, u_2)$ in PM, plotted on a log-log scale.} }
      \label{fig:elliptic:covergence_plot}
  \end{figure}

\subsubsection{Three separated incommensurate frequencies}\label{sec:numerical_three_modes}
In this subsubsection, we present the test where the spectral points of the elliptic coefficient and the solution correspond to three separated incommensurate frequencies. Similar to the first test, we continue using the PM to solve the quasiperiodic elliptic equation \eqref{eqn:elliptic} with the C-PCG method, demonstrating the effectiveness of our proposed algorithm.

In this test, we consider the quasiperiodic elliptic equation  with  the coefficient $$\alpha_2(x)=\cos(2\pi x)+\cos(2\pi\sqrt{2}x)+\cos(2\pi\sqrt{3}x)+6$$ and the exact solution $$u_3(x)=\sin(2\pi x)+\sin(2\pi\sqrt{2}x)+\sin(2\pi\sqrt{3}x).$$

\textbf{C-PCG \textit{vs.}~PCG in PM scheme.}  Firstly, we apply the PM with the C-PCG method to solve the quasiperiodic elliptic equation \eqref{eqn:elliptic} and compare its efficiency with the standard PCG method. 
\Cref{tab_appendix:elliptic:three_modes:PMefficiency} shows a direct comparison of the efficiency between PCG and C-PCG. It is evident that C-PCG can significantly reduce CPU time compared to PCG.  At the same time, C-PCG achieves substantial memory savings, as illustrated in \Cref{tab_appendix:elliptic:three_modes:PMmemory}.  Visually, as shown in \Cref{fig_appendix:elliptic:three_modes:PMmemory}, the memory ratio $r = M_{\mathrm{PCG}}/M_{\mathrm{C-PCG}}$ scales approximately $\mathcal{O}(D)$. For example, when $N=32$,  C-PCG  requires only 1.25e-01 Gb of memory, whereas PCG requires 4.10e+03 Gb.  This results in the memory  consumed by the two reaching  $32800 \approx 32^3$.    Moreover, \Cref{tab_appendix:elliptic:three_modes:PMcond} presents the performance of preconditioner $M$. After applying 
$M$, the condition number of $Q$ undergoes a substantial reduction, stabilizing in the range of 3 to 3.5 as $N$ grows.
 \begin{table}[!hbpt]
	\centering
 \footnotesize{
	\begin{tabular}{|c|c|c|c|c|c|}
		\hline
		$N$  &4& 8 & 16&32&64\\
		\hline
		 CPU time(PCG)&4.31e-02&2.76&-&-&-\\
		\hline
		CPU time(C-PCG)&4.10e-03 &3.12e-02&2.51e-01& 2.05&15.87\\
		\hline
		Iteration(PCG)&20&24&-&- &-\\
		\hline
	  Iteration(C-PCG)&20&24&24&23 &23\\
		\hline
		$ e_N$(PCG)&7.79e-02&6.16e-16&-&-&-  \\
		\hline
		$ e_N$(C-PCG)&7.79e-02&6.16e-16&8.10e-16&1.88e-15&1.86e-15\\
		\hline
	\end{tabular}
 }
	 \caption{ 
Efficiency comparison of PCG and C-PCG when solving \eqref{eqn:elliptic} with ($\alpha_2(x), u_3(x)$) in PM (Data for $N\geq 16$ with  PCG is not available due to insufficient memory).}\label{tab_appendix:elliptic:three_modes:PMefficiency}
\end{table}

\begin{figure}[!hbpt]
     \centering
     \includegraphics[width=7cm]{./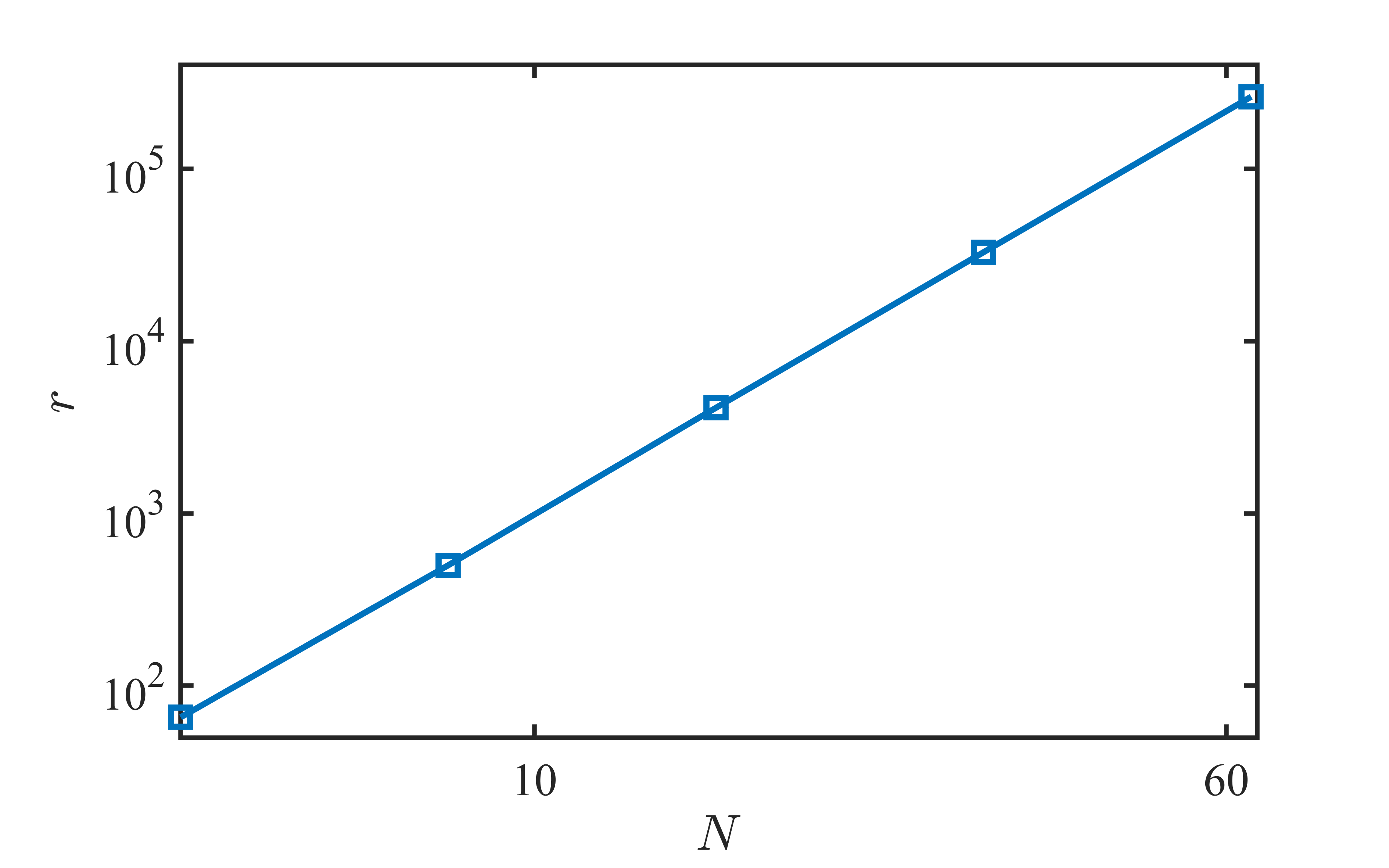}
     \caption{The ratio  $r=M_{\mathrm{PCG}}/M_\mathrm{C-PCG}$ vs.   $N$ in log-log scale. }
     \label{fig_appendix:elliptic:three_modes:PMmemory}
 \end{figure}

\begin{table}[!hbpt]
	\centering
 \footnotesize{
	\begin{tabular}{|c|c|c|c|c|c|}
		\hline
		$N$ &4&8&16&32&64\\
		\hline
		 $M_{\mathrm{PCG}}$&1.60e-02&1.00e+00&6.40e+01&4.10e+03&2.62e+05\\

		\hline
		$M_{\mathrm{C-PCG}}$&2.44e-04&2.00e-03&1.56e-02&1.25e-01&1.00e+00\\
		\hline

	\end{tabular}
 }
    \caption{Comparison of memory usage between full storage ($M_{\mathrm{PCG}}$) and compressed storage ($M_{\mathrm{C-PCG}}$) for the stiffness matrix $Q$ generated by $\alpha_2$.  }
    \label{tab_appendix:elliptic:three_modes:PMmemory}
\end{table}

\begin{table}[!hbpt]
	\centering
 \footnotesize{
	\begin{tabular}{|c|c|c|c|c|c|}
		\hline
		$N$  &4&8&16&32&64\\
		\hline
		 $Q$&3.88e+01&1.56e+02&6.25e+02&2.50e+03&5.61e+04\\

		\hline
		$QM$&3.27&3.27&3.13&3.07&3.09\\
		\hline	
	\end{tabular}
 }
 \caption{Condition number of $Q$ and preconditioned system $QM$.}
  \label{tab_appendix:elliptic:three_modes:PMcond}
\end{table}

\textbf{PM \textit{vs.}~PAM.}
We present a detailed comparison of the PM and PAM in solving equation \eqref{eqn:elliptic}. To approximate $\alpha_2(x)$, we use periodic functions
$
\alpha_{2p}(x)=\cos(2\pi x)+\cos(2\pi([\sqrt{2}L]/L)x)+\cos(2\pi([\sqrt{3}L]/L) x)+6
$ with varying $L$. The corresponding Diophantine approximation error, denoted as $e_d$ is defined by 
$$
e_d=\max\left\{ \sqrt{2}L-[\sqrt{2}L], \sqrt{3}L-[\sqrt{3}L]\right\}.
$$
To ensure enough numerical accuracy of discretizing \eqref{eqn:elliptic}, we take $E = 16L$  in the PAM. 
\Cref{tab_appendix:elliptic:three_modes:PAMerr} records the numerical error $e_N$ of PAM and corresponding Diophantine approximation error $e_d$. The data shows that $e_N$ of PAM is mainly controlled by $e_d$.
Moreover, once the value of $L$ is fixed, the discrete points reach a critical value,  beyond which the numerical error $e_N$ of PAM cannot decrease,  as illustrated in \Cref{tab_appendix:elliptic:three_modes:PAMvaryN}. For instance, when $L=34$,  despite the increase in  $k$, the numerical error of PAM remains at $e_N=1.17\mathrm{e}-01$, which is  comparable to the order of the Diophatine approximation error.
\begin{table}[!hbpt]
	\centering
 \footnotesize{
	\begin{tabular}{|c|c|c|c|c|c|}
        \hline
		$N$&$3\times16$&$7\times 16$&$22\times 16$&$34\times 16$&$ 41 \times 16$\\
		\hline
		$[\sqrt{2}L]/L$&4/3&10/7&31/22&48/34&58/41\\
		\hline
		$[\sqrt{3}L]/L$&5/3&12/7&38/22&59/34&71/41\\
		\hline
		 $e_d$&2.43e-01&1.24e-01&1.13e-01&1.10e-01&1.72e-02\\
		\hline
		$ e_N$&2.63e-01&1.75e-01&1.96e-01&1.17e-01&2.87e-02\\
		\hline
		
	\end{tabular}
 }
 \caption{ Numerical error of PAM in solving \eqref{eqn:elliptic} with $\alpha_{2p}(x)$ and corresponding Diophantine approximation  error against $L$.  }
	\label{tab_appendix:elliptic:three_modes:PAMerr}
\end{table}

\begin{table}[!hbpt]
	\centering
 \footnotesize{
	\begin{tabular}{|c|c|c|c|c|}
		\hline
	  $e_N$&$k=3$&$k=4$&$k=5$ &$k=6$\\
		\hline
		 $L=3$&2.62e-01&2.62e-01&2.63e-01&2.63e-01\\
		\hline
		$L=7$&1.75e-01&1.75e-01&1.75e-01&1.75e-01\\
		\hline
		$L=22$&1.96e-01&1.96e-01&1.96e-01&1.96e-01\\
		\hline
		$L=34$&1.17e-01&1.17e-01&1.17e-01&1.17e-01\\
		\hline
		$L=41$&2.87e-02&2.87e-02&2.87e-02&2.87e-02\\
		\hline
	\end{tabular}
 }
 \caption{Numerical error with variable $E=L\times 2^k$ for different $L$ in PAM. }
 \label{tab_appendix:elliptic:three_modes:PAMvaryN}
\end{table}

Finally, we  compare the efficiency of PM and PAM. \Cref{tab_appendix:elliptic:three_modes:PMPAMcompare} presents the accuracy achieved by PM and PAM, along with their required CPU time. Moreover, \Cref{fig_appendix:elliptic:three_modes:PMPAMcompare} provides a visual representation of the trade-off between cost  and accuracy. It again showcases  that PM achieves machine precision in a short amount of time, while PAM only achieves relatively low accuracy dependent on the Diophantine approximation error, even with much larger CPU time.

\begin{table}[!hbpt]
	\centering
 \footnotesize{
	\begin{tabular}{|c|c|c|c|c|}
		\hline
		&$N$&4&8&16\\
		\hline
		\multirow{3}{*}{$e_N$} & PM & 7.79e-02 & 6.16e-16 & 8.10e-16\\
		\cline{2-5}
		&\mbox{PAM($L=1183$)}& 2.56e-01 &2.79e-03 &2.79e-03\\
		&\mbox{PAM($L=1463$)}& 2.55e-01 &2.25e-03 & 2.25e-03\\

		\hline
		\multirow{3}{*}{\mbox{CPU time}}&\mbox{PM} & 3.99e-02 & 2.59e-02 & 2.51e-01\\
		\cline{2-5} 
		& \mbox{PAM($L=1183$)} &4.56 &17.26 &70.23 \\
		& \mbox{PAM($L=1463$)} &5.43 &21.34& 81.63\\
		\hline
		
	\end{tabular}
      }
      \caption{Performance comparison between PM and PAM in solving \eqref{eqn:elliptic} with $\alpha_2(x)$. }\label{tab:elliptic:two_modes:PMPAMcompare }
	\label{tab_appendix:elliptic:three_modes:PMPAMcompare}
\end{table}

\begin{figure}[!hbpt]
     \centering
     \includegraphics[width=7cm]{./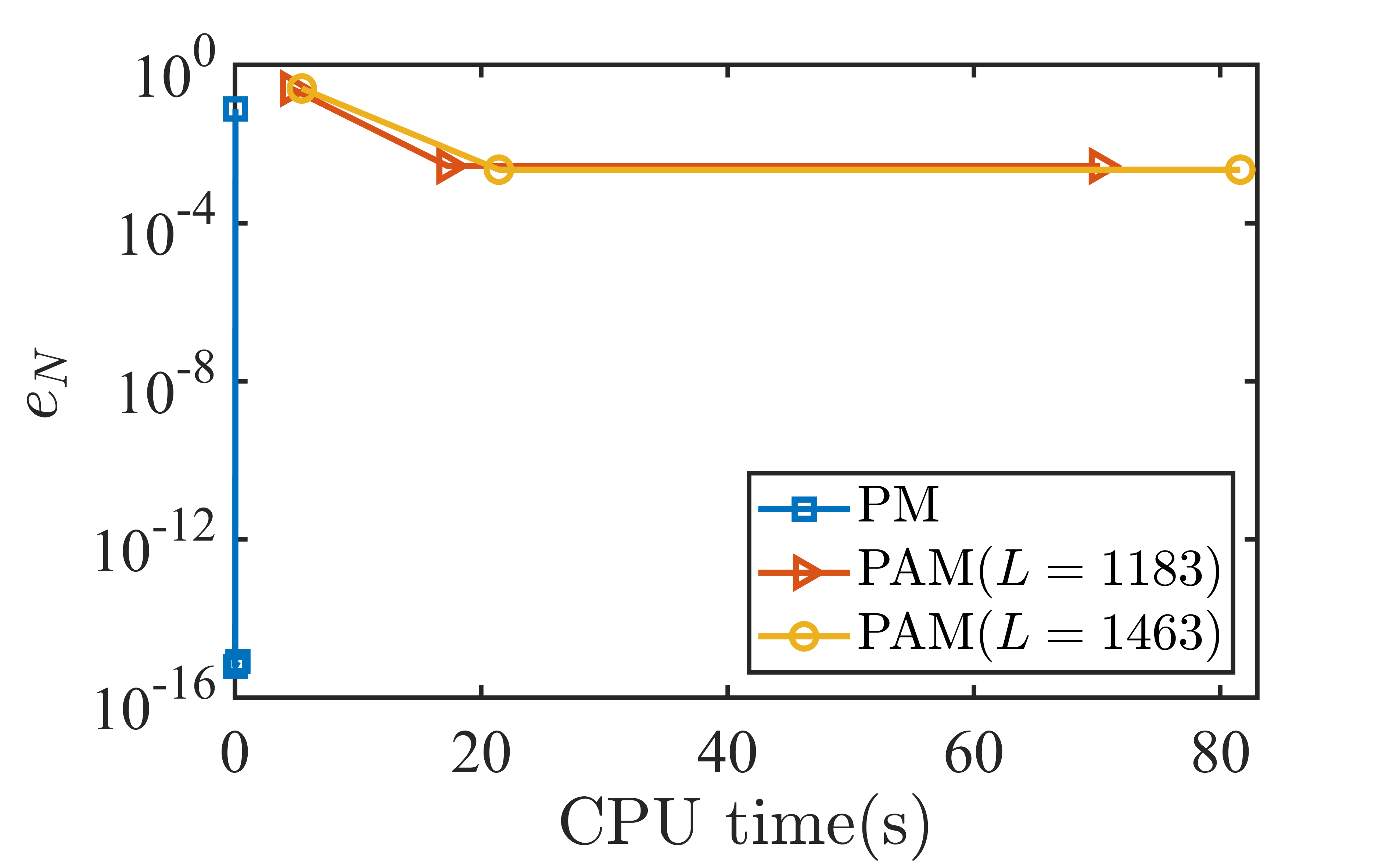}
     \caption{Cost-Accuracy trade-off  of  PM  and PAM $(L=1183, 1463)$ in solving \eqref{eqn:elliptic} with $\alpha_2(x)$,  respectively.}
     \label{fig_appendix:elliptic:three_modes:PMPAMcompare}
 \end{figure}

\textbf{Different frequencies in coefficient and exact solution.}
In this test, we  consider the case where the spectral points of coefficients and solutions are distinct. We discretize \eqref{eqn:elliptic} using the PM  with $\alpha_1(x) = \cos(2\pi x) + \cos(2\pi\sqrt{2} x) + 6$ and the exact solution $u_4(x) = \sin(2\pi x) + \sin(2\pi \sqrt{3}x)$. The resulting linear system is solved using the C-PCG method. It is apparent that this quasiperiodic system can be embedded into a three-dimensional parent system. The numerical results demonstrate that the numerical error $e_N$ of the PM  convergence rapidly to machine precision, as shown in \Cref{tab_appendix:elliptic:incommensurate}.
\begin{table}[!hbpt]
	\centering
 \footnotesize{
	\begin{tabular}{|c|c|c|c|c|c|}
		\hline
		$N$  &4& 8 & 16&32&64\\
		\hline
		CPU time &3.70e-03 &2.51e-02&2.47e-01& 1.10&8.69\\
		\hline
	   	Iteration &20&24&24&23 &23\\
		\hline
		$ e_N$ &7.10e-02&6.26e-16&7.95e-16&1.79e-15&1.74e-15\\
		\hline
	\end{tabular}
 }
 \caption{Numerical solution of \eqref{eqn:elliptic} with $(\alpha_1(x), u_4(x))$ using C-PCG method in PM.}
	 \label{tab_appendix:elliptic:incommensurate}
\end{table}

\subsubsection{Four separated incommensurate frequencies}\label{sec:numerical_four_modes}
In this subsubsection, we further demonstrate the high precision and efficiency of the proposed method for solving \eqref{eqn:elliptic} through a two-dimensional example. Specifically, we consider the case where $$\alpha_3(x,y) = \cos(2\pi x) + \cos(2\pi\sqrt{2}y) + \cos(2\pi\sqrt{3}x) + \sin(2\pi\sqrt{5}y) + 12$$ and $$u_5(x,y) = \sin(2\pi x) + \cos(2\pi\sqrt{2}y) + \sin(2\pi\sqrt{3}x) + \cos(2\pi\sqrt{5}y)$$ in  \eqref{eqn:elliptic}. Using the  projection matrix
$$
\bm{P}=2\pi\cdot\left(
\begin{array}{cccc}
	1 & 0& \sqrt{3} &0\\
    0 &\sqrt{2}& 0&\sqrt{5}\\
\end{array}
\right),
$$
the quasiperiodic system can be embedded into a four-dimensional parent system.

\Cref{tab_appendix:elliptic:four_modes:PMerr} shows that $e_N$ rapidly reaches machine precision in a very short amount of time.  The rapid convergence highlights exceptional efficiency and accuracy of PM, further validating its effectiveness in solving the quasiperiodic elliptic problem.

\begin{table}[!hbpt]
    \centering
    \footnotesize{
    \begin{tabular}{|c|c|c|c|}
    \hline
        $N$ &4 &6&8 \\
        \hline
        $ e_N$ &4.25e-02&2.41e-16 &5.39e-16\\
        \hline
        CPU time&0.31&1.57&4.97\\
        \hline
    \end{tabular}
      }
\caption{ Numerical solution with $(\alpha_3(x, y), u_5(x, y))$ using PM with C-PCG method.}
\label{tab_appendix:elliptic:four_modes:PMerr}
\end{table}

\subsection{Quasiperiodic homogenization}
\label{sec:numerical_experiments:hom}

In this subsection, we apply PM to quasiperiodic homogenization. In classical two-scale homogenization, we consider the coefficient $A(\bm{x}/\varepsilon)$, where $\varepsilon \ll 1$. The corresponding solution is denoted as $u_\varepsilon$, and as $\varepsilon \to 0$, we expect the convergence of $u_\varepsilon$ to $u^*$, which represents the solution of an elliptic equation with a homogenized coefficient $A^*$. Alternatively, we can perform a scaling transformation of the variable $\bm{x}/\varepsilon \to \bm{x}$, which allows us to transform the domain from a bounded region to the entire space. When $A(\cdot)$ is periodic, it is possible to construct the corrector equation on the periodic cell. However, when $A(\cdot)$ is quasiperiodic, the corresponding quasiperiodic corrector equation is posed on the whole space 
\begin{align}\label{eqn:corrector}
    -\mbox{div}(A_{\mathrm{q-per}}(\bm{x})(\bm{p}+\nabla u_{\bm{p}}(\bm{x}))) = 0, \quad\bm{x}\in\mathbb{R}^d,
\end{align}
where the scalar-valued coefficient matrix $A_{\mathrm{q-per}}(\bm{x})$ is uniformly elliptic, and $\bm{p}\in\mathbb{R}^{d}$ is an arbitrary vector. According to the homogenization theory \cite{blanc2023homogenization}, $A^*$ can be computed by the following formula
\begin{equation}\label{eqn:homcoefficient}
    A^*_{ij}=\mathcal{M}\left\{ \bm{e}_i^{T}A_{\mathrm{q-per}}(\bm{x})(\bm{e}_j+\nabla u_{\bm{e}_j}(\bm{x})) \right\}.
\end{equation}
The numerical challenge lies in accurately solving the quasiperiodic corrector equation \eqref{eqn:corrector}.
Our developed algorithm addresses this crucial issue by leveraging the periodic structure in the high dimensions, thereby enhances the overall efficiency and accuracy. To demonstrate the effectiveness of our approach, we provide a numerical example in the following. 


We consider the corrector equation \eqref{eqn:corrector} with 
\begin{align}{\label{eqn:homcoefficient1}}
A_{\mathrm{q-per}}(x_1, x_2)=
\begin{pmatrix}
      \begin{smallmatrix}
     &4+\cos(2\pi(x_1+x_2))+ \cos(2\pi\sqrt{2}(x_1+x_2))
  \end{smallmatrix}
	 &0\\
	0& \begin{smallmatrix}
	     & 6+\sin^2(2\pi x_1)+ \sin^2(2\pi\sqrt{2}x_1)
	\end{smallmatrix}
\end{pmatrix},
\end{align}
which also appears in \cite{blanc2010improving}.
The corresponding projection matrix is 
$$
\bm{P}=2\pi\cdot\left(
\begin{array}{cccc}
	1 & \sqrt{2}& 1 &\sqrt{2}\\
    1 &\sqrt{2}& 0&0\\
\end{array}
\right).
$$

We calculate a reference approximation of $A^*$ using PM discretization with $N=18$, denoted as $A_{\mathrm{ref}}^*$. Then, we compute the numerical homogenized coefficients $A^*_{11, N}$ and $A^*_{22, N}$, and record the corresponding numerical errors $e^*_{11, N}=|A^*_{11, N}-A^*_{11, \mathrm{ref}}|$ and $e^*_{22, N}=|A^*_{22, N}-A^*_{22, \mathrm{ref}}|$ in \Cref{tab:hom:four_modes:PMa11}. These results demonstrate that the errors $e^*_{11, N}$ and $e^*_{22, N}$
 {decay at a polynomial rate} as 
$N$ increases, with $A_{\mathrm{ref}}^*$ already possessing a minimum accuracy of 10 digits. 

\begin{table}[!hbpt]
    \centering
    \footnotesize{
    \begin{tabular}{|c|c|c|c|c|c|c|c|}
    \hline
    $N$ &4 & 6&8 &10&12&14&16 \\
    \hline
    $ e_{11, N}^*$  &1.60e-03&3.18e-05&7.23e-07&1.77e-08&4.56e-10&1.21e-11&3.17e-13\\
    \hline
    $e_{22, N}^*$ &8.11e-03&1.87e-04&4.37e-06&1.03e-07&2.45e-09&5.89e-11&1.45e-12\\
    \hline
    
    \end{tabular}
    }
    \caption{Numerical errors for homogenized coefficients $A^*_{11}$ and $A^*_{22}$ calculated by the C-PCG method with PM discretization.}
    \label{tab:hom:four_modes:PMa11}
\end{table}

{For comparison, we also compute $A_{11}^*$ by the PAM and the FM. The implementation of the FM \cite{blanc2010improving, blanc2023homogenization} is provided in \Cref{sec_appendix:solve_corrector_eqn_filter}. In the PAM, we take
\begin{align} {\label{eqn:homcoefficientperiodic}}
A_{\mathrm{per}}(x_1, x_2)=
\begin{pmatrix}
      \begin{smallmatrix}
     &4+\cos(2\pi(x_1+x_2))+\cos(2\pi([\sqrt{2}L]/L)(x_1+x_2))
  \end{smallmatrix}
	 &0\\
	0& \begin{smallmatrix}
	     & 6+\sin^2(2\pi x_1)+ \sin^2(2\pi([\sqrt{2}L]/L)x_1)
	\end{smallmatrix}
\end{pmatrix}
\end{align}  
to approximate \eqref{eqn:homcoefficient1} by varying computational area $[0,L)^2$. The homogenized coefficients in the periodic case reads
\begin{equation}
    A^*_{ij}=\frac{1}{L^2}\int_{[0, L)^2}\bm{e}_i^{T}A_{\mathrm{per}}(\bm{x})(\bm{e}_j+\nabla u_{\bm{e}_j}(\bm{x}))\,d\bm{x}.
\end{equation} 
For the FM \cite{blanc2010improving,blanc2023homogenization},
    the associated corrector problem  is formulated as follows
    \begin{equation}\label{eqn:filter_problem}\left\{\begin{aligned}
&-\operatorname{div} \left[\varphi_L(\bm{x}) \left(A_{\mathrm{q-per}}(\bm{x})(\bm{p} + \nabla u_{\bm{p}}) + \bm{\lambda}\right)\right] = 0, \quad \bm{x} = (x_1, x_2) \in Q_L, \\
&\int_{Q_L} \nabla u_{\bm{p}} (\bm{x}) \, \varphi_L(\bm{x}) \, d\bm{x} = 0,
\end{aligned}
\right.
\end{equation}
where $Q_L = LQ$ with $Q$ being the unit cube. The vector $\bm{\lambda}\in\mathbb{R}^2$ 
is a Lagrange multiplier associated with the constraint in the second line of \eqref{eqn:filter_problem}. The filtering function $\varphi_L$ satisfies
  \begin{equation} 
\left\{\begin{aligned}
&\varphi_L \in C^s(\overline{Q}_L), \quad \varphi_L \geq 0, \quad \int_{Q_L} \varphi_L = 1, \\
&\forall l \leq s-1, \quad D^l \varphi_L\big|_{\partial Q_L} = 0.
\end{aligned}
\right.
\end{equation}
The homogenized coefficients obtained by the FM are defined by
  \begin{equation}
    A^*_{ij}=\int_{Q_L}\bm{e}_i^{T}A_{\mathrm{q-per}}(\bm{x})(\bm{e}_j+\nabla u_{\bm{e}_j}(\bm{x}))\varphi_L(\bm{x})\,d\bm{x}.
\end{equation} }
  
 {\Cref{fig:hom_three} illustrates the quasiperiodic homogenized coefficients computed using the three methods: PM, PAM and FM.  The red line represents $A^*_{11, \mathrm{ref}}$ calculated by PM, serving as the reference solution. The blue and black lines show the approximations $A^*_{11, L}$ obtained by PAM and FM, respectively, as the domain size $L$ varies. The corresponding numerical errors $e^*_{11, L}=|A^*_{11, L}-A^*_{11, \mathrm{ref}}|$ are displayed in \Cref{fig:hom_compare_three}. } 
  
  {From these numerical tests, we notice a slow convergence of the PAM in computing the quasiperiodic homogenized coefficient. Furthermore,   \Cref{fig:hom_compare_three} shows that the numerical error of the PAM does not consistently decrease as $L$ increases, mainly due to the unavoidable impact of the Diophantine approximation error. Meanwhile, although FM suppresses oscillations, as shown in \Cref{fig:hom_three},  its numerical error still fails to decrease consistently, as seen in \Cref{fig:hom_compare_three}, due to the same underlying cause. In summary, these results demonstrate that neither the PAM nor the FM can eliminate the impact of Diophantine errors, whereas the PM is capable of fundamentally overcoming this limitation.
    \begin{figure}[!hbpt]	
       \hspace{12mm}
	\subfigure[] 
	{
		\begin{minipage}{7cm}
			\centering         
			\includegraphics[width=6.8cm]{./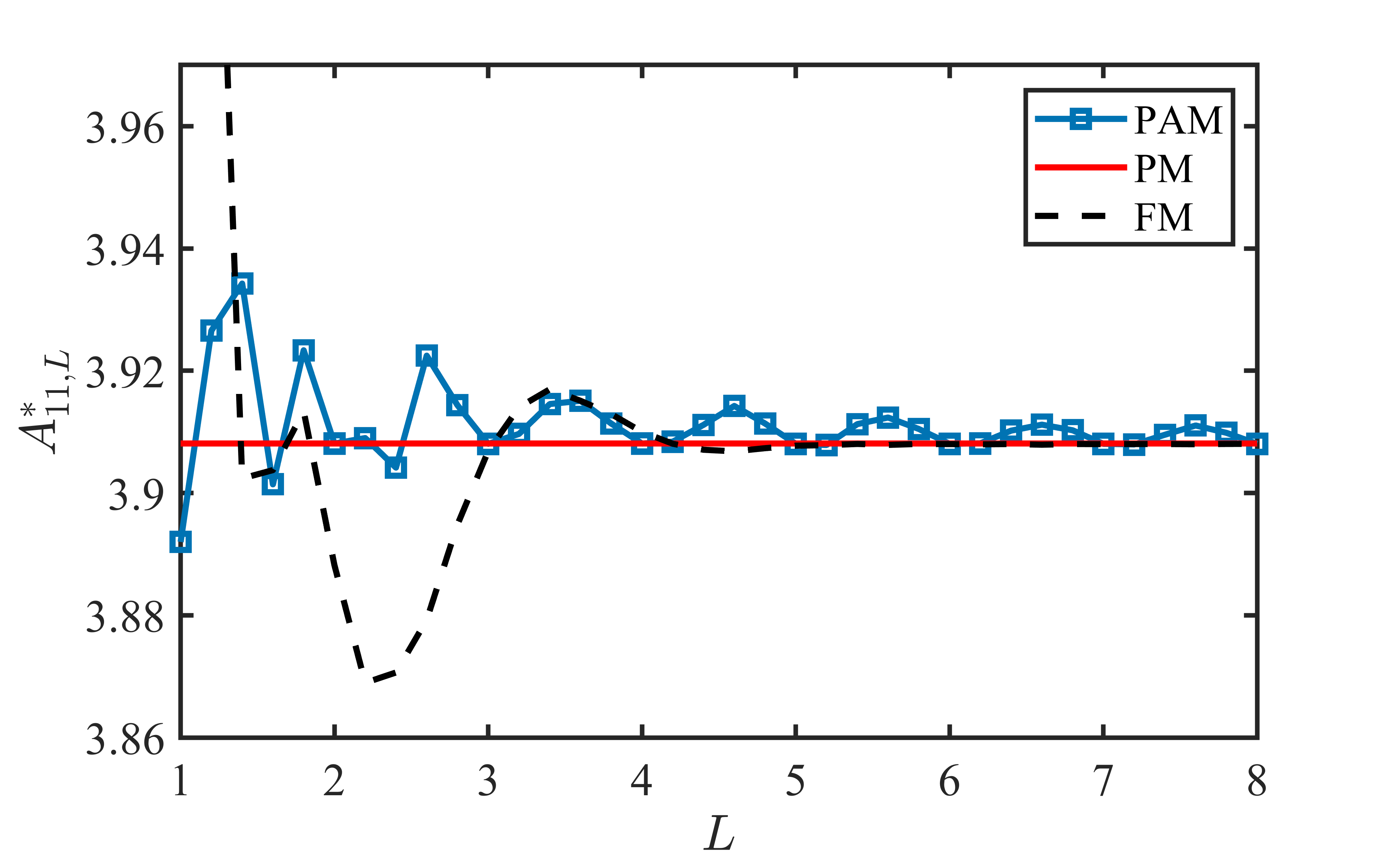}   
            \label{fig:hom_three}
		\end{minipage}
 	}
        \hspace{-9mm}
	\subfigure[]
	{
		\begin{minipage}{7cm}
			\centering      
			\includegraphics[width=6.8cm]{./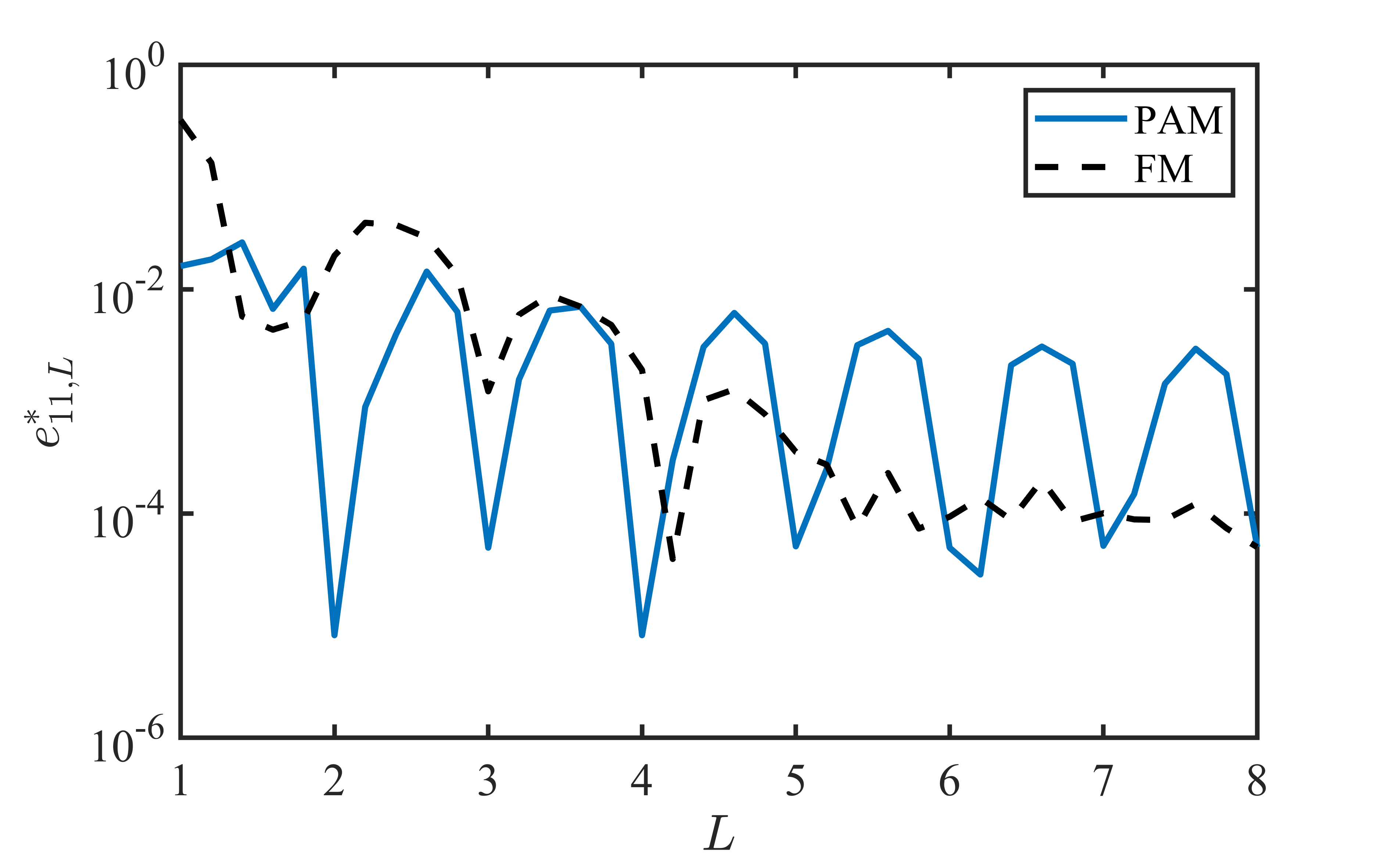}   
            \label{fig:hom_compare_three}
		\end{minipage}
	}
    \caption{{(a) Approximation values of $A^*_{11}$ calculated by PM, PAM and FM; (b) Numerical error $e_{11, L}^*$ of PAM and FM in calculating  $A^*_{11, L}$ against $L$.} }
    \label{fig:hom:filter}
\end{figure}
}

{In the following, we compare the efficiency of PM, PAM and FM for quasiperiodic homogenization. \Cref{tab:hom:four_modes:PMPAMa11compare} showcases the error $e^*_{11, L}$ computed by the three methods, along with the corresponding CPU time. We denote by $N_c$ the number of discretization points per dimension within a unit cell. Consequently, the degrees of freedom (DOF) for PM, PAM and FM are $N_c^4$, $(L N_c)^2$, and $(L N_c)^2$, respectively. As shown in \Cref{tab:hom:four_modes:PMPAMa11compare}, PM demonstrates superior computational efficiency and numerical precision compared to PAM and FM. Specifically, at $N_c = 16$, the PM achieves a high precision ($3.18\times10^{-13}$) in merely 2.89 seconds. In contrast, the PAM and FM plateau at error levels of approximately $3 \times 10^{-5}$ and $1 \times 10^{-4}$, respectively, while requiring significantly longer computation times (up to 10.83 seconds for the FM with $L = 7$). Furthermore, PM exhibits a significantly fast convergence speed as $N_c$ increases, whereas PAM and FM errors do not uniformly decrease as $N_c$ increases. These results demonstrate the high efficiency of the PM.}

       

		


\begin{table}[!hbpt]
	\centering
  \footnotesize{
	\begin{tabular}{|c|c|c|c|c|}
		\hline
		&$N_c$&4&8&16\\
		\hline
		\multirow{5}{*}{$e^*_{11, L}$} & \mbox{PM} & 1.51e-03 & 7.07e-07 & 3.18e-13\\
		\cline{2-5}
		&\mbox{PAM($L=5$)}&1.51e-04 &3.58e-05 &3.55e-05\\
		&\mbox{PAM($L=7$)}& 1.12e-04 &3.47e-05 & 3.42e-05\\
       
        \cline{2-5}
		&\mbox{FM($L=5$)}&1.21e-03 &3.54e-04 &3.50e-04\\
		&\mbox{FM($L=7$)}& 9.71e-05 &1.01e-04 & 1.00e-04\\

		\hline
		\multirow{5}{*}{\mbox{CPU time}}&\mbox{PM} & 1.63e-02& 5.64e-01 & 2.89\\
		\cline{2-5} 
		& \mbox{PAM($L=5$)} &2.97e-03&3.23e-02 &1.94e-01 \\
		& \mbox{PAM($L=7$)} &5.27e-03 &6.84e-02& 5.83e-01\\
		
        \cline{2-5}
		&\mbox{FM($L=5$)}&2.00e-02 &1.01e-01 &1.72\\
		&\mbox{FM($L=7$)}& 3.8e-02 &4.23e-01 & 10.83\\
\hline

	\end{tabular}
 }
 \caption{{Efficiency comparison for computing the homogenized coefficient $A_{11,L}^*$ using different methods: PM (DOF $= N_c^4$), PAM (DOF $= (LN_c)^2$, with $L=5, 7$), and FM (DOF $= (LN_c)^2$, with $L=5,7$).}}
\label{tab:hom:four_modes:PMPAMa11compare}
\end{table}

\section{Conclusion and outlook}\label{sec:conclusion}
This paper focuses on the development of an efficient algorithm for solving quasiperiodic elliptic equations, and in particular, its application in quasiperiodic homogenization. We discretize the equation using the PM, leading to a linear system through tensor-vector-index conversion, and provide a rigorous convergence analysis showing {polynomial accuracy (which, for analytic solutions, can achieve spectral accuracy)}. To reduce computational costs, we propose a compressed storage scheme and use the C-PCG iteration with a diagonal preconditioner. Memory analysis and complexity analysis are provided to assess the algorithm's efficiency. We apply our algorithm to solve quasiperiodic elliptic equations and compute the homogenized coefficients of a multiscale quasiperiodic PDE. Numerical results demonstrate the effectiveness of our algorithm, showing it successfully avoids the Diophantine approximation error and further validates the theoretical results.

In the future, we plan to extend our method to other PDEs with quasiperiodic coefficients, such as parabolic and wave equations. Moreover, we will conduct a detailed convergence analysis of the PM for quasiperiodic PDEs to deepen understanding of its theoretical properties. Finally, we aim to adapt the PM for nonlinear homogenization problems, addressing new challenges and computational strategies. All these projects will not only enhance the theoretical foundation of our method but also contribute to its practical application in quasiperiodic PDEs.


 

\appendix
\section{The proof of \Cref{lemma_appendix:eqv_mod_thm}}
\label{sec_appendix:eqv_mod_thm}
\begin{proof}
We start from the left inequality in \eqref{eqn:eq_mod_thm}. Since $L_i, i=1,2,\cdots, K$ are bounded linear functionals in $\mathcal{H}_{QP}^s(\mathbb{R}^d)$, then we have
$$
    |L_i(u)|\leq \alpha_i\|u\|_s,\quad i=1,2,\cdots, K.
$$
Combining with 
$$
    |u|_s\leq\|u\|_s,
$$
we have 
$$
|u|_s+\sum_{i=1}^K|L_i(u)|\leq\bigg(1+\sum_{i=1}^K\alpha_i\bigg)\|u\|_s.
$$
The conclusion holds by setting $c_1 = \left(1+\sum_{i=1}^K\alpha_i\right)^{-1}$.

To establish the right inequality in \eqref{eqn:eq_mod_thm}, we utilize a proof by contradiction. If it is not true, then for any $n\in\mathbb{N}^+$, there must exist $v_n\in \mathcal{H}_{QP}^s(\mathbb{R}^d)$ such that
$$    \|v_n\|_s>n\bigg(|v_n|_s+\sum_{i=1}^K|L_i(v_n)|\bigg).
$$
    
Let $u_n=v_n/\|v_n\|_s$, then $u_n$ satisfies
\begin{equation}\label{eqn_appendix:eq_mod_thm_1}
        \|u_n\|_s =1,
\end{equation}
and
\begin{equation}\label{eqn_appendix:eq_mod_thm_2}
       |u_n|_s+\sum_{i=1}^K|L_i(u_n)|<\frac{1}{n}.
\end{equation}
    
Firstly, $\left\{u_n\right\}$ is a bounded sequence in $\mathcal{H}_{QP}^s(\mathbb{R}^d)$.  Therefore, there must be a subsequence (still denoted as $u_n$) that converges in $\mathcal{H}_{QP}^{0}(\mathbb{R}^d)$, then we have
$$
    \|u_n-u_m\|_{0}\to 0,\quad n, m\to\infty. 
$$
Using \eqref{eqn_appendix:eq_mod_thm_2}, it holds that
$$
    |u_n-u_m|_s\leq|u_n|_s + |u_m|_s<\frac{1}{n}+\frac{1}{m}\to 0,\quad n,m\to\infty,
$$
then
$$
    \|u_n-u_m\|_s=\left(\|u_n-u_m\|_{0}^2+|u_n-u_m|_s^2\right)^{1/2}\to 0, \quad n,m\to\infty.
$$
Consequently, $\left\{u_n\right\}$ is a Cauchy sequence in $\mathcal{H}_{QP}^s(\mathbb{R}^d)$ and there exists a $u\in \mathcal{H}_{QP}^s(\mathbb{R}^d)$ such that
$$
    \|u_n-u\|_s\to 0,\quad |u_n-u|_s\to 0,\quad n\to\infty.
$$
Taking the limit on both sides of \eqref{eqn_appendix:eq_mod_thm_1}, we have
$$
    \|u\|_s=1,
$$
    i.e., $u\neq 0$.  Taking the limit on both sides of \eqref{eqn_appendix:eq_mod_thm_2}, we have
$$
     |u|_s=0,\quad \sum_{i=1}^{K}|L_i(u)|=0.
$$
That is,  for all $|p|=s$, we have $D^p u=0$. Then we conclude that $u$ is a polynomial of degree $s-1$ such that $|L_i(u)|=0$ for $i=1,2,\cdots, K$. However, this contradicts the assumption. Therefore, we complete the proof.
\end{proof}

\section{The proof of \Cref{lemma_appendix:Poincaré's inequality}}
\label{sec_appendix:poincaré_inequality}
 \begin{proof}
Applying \Cref{lemma_appendix:eqv_mod_thm} with $s=1$ and $L(u)=\displaystyle{\bbint} ud\bm{x}$,  we have
    \begin{equation}
            \begin{aligned}
                |L(u)|&=\left|\bbint ud\bm{x}\right|\leq\bigg(\bbint1^2d\bm{x}\bigg)^{1/2}
                \bigg(\bbint u^2d\bm{x}\bigg)^{1/2}\\
                &=\|u\|_0\leq\|u\|_1
            \end{aligned}
    \end{equation}
    by using the H\"{o}lder's inequality. Therefore,  $L(\cdot)$ is a bounded linear functional on $\mathcal{H}^1_{QP}(\mathbb{R}^d)$. And for any non-zero constant $b$,  we have
        $$
         L(b)=\bbint bd\bm{x}=b\bbint 1 d\bm{x}=b\neq 0.
        $$
        By \Cref{lemma_appendix:eqv_mod_thm}, we have 
        $$
        \|u\|_1\leq c_2\Big(|u|_1 +\bbint ud\bm{x}\Big).
        $$
        Moreover, $u\in\overline{\mathcal{H}}^1_{QP}(\mathbb{R}^d)$ implies $\displaystyle\bbint ud\bm{x}=0$, then   we complete the proof.
    \end{proof}
\section{PAM discretization}
\label{sec_appendix:pam_discrete}

Here, we utilize the PAM to discretize \eqref{eqn:elliptic}. We select functions $\alpha_p(\bm{x})$ and $f_p(\bm{x})$ with a period of $L$ to approximate $\alpha(\bm{x})$ and $f(\bm{x})$,  respectively.
Define the following sets
$$
\Lambda_{p,\alpha}=\left\{\bm{h}_\alpha=[L\bm{\lambda}_\alpha],~~\bm{\lambda}_\alpha\in\Lambda_\alpha\right\},
$$
$$
\Lambda_{p,f}=\left\{\bm{h}_f=[L\bm{\lambda}_f],~~\bm{\lambda}_f\in\Lambda_f\right\},
$$
where  $[\bm{x}]=([x_1],\cdots,[x_d])$. Then, we can express the Fourier series of $\alpha_p(\bm{x})$ and $f_p(\bm{x})$ as follows
$$
\alpha_p(\bm{x})=\sum_{\bm{h}_\alpha\in\Lambda_{p,\alpha}}\hat{\alpha}_p(\bm{h}_\alpha)e^{\imath(\bm{h}^T_\alpha\bm{x})/L},\quad \hat{\alpha}_p(\bm{h}_\alpha)=\frac{1}{|\mathbb{T}^d|}\int_{\mathbb{T}^d}\alpha_p(\bm{x})e^{-\imath(\bm{h}^T_\alpha\bm{x})/L},
$$
$$
f_p(\bm{x})=\sum_{\bm{h}_f\in\Lambda_{p,f}}\hat{f}_p(\bm{h}_f)e^{\imath(\bm{h}^T_f\bm{x})/L}, \quad\hat{f}_p(\bm{h}_f)=\frac{1}{|\mathbb{T}^d|}\int_{\mathbb{T}^d}f_p(\bm{x})e^{-\imath(\bm{h}^T_u\bm{x})/L},
$$
where $\mathbb{T}^d= [0,2\pi L)^d$. As in  the quasiperiodic case, $\Lambda_{p, u}\subseteq\mathrm{span}_{\mathbb{Z}}\left\{\Lambda_{p,\alpha},\Lambda_{p,f}\right\}$, we then obtain 
$$
u_p(\bm{x})=\sum_{\bm{h}_u\in\Lambda_{p,u}}\hat{u}_p(\bm{h}_u)e^{\imath(\bm{h}^T_u\bm{x})/L}, \quad\hat{u}_p(\bm{h}_u)=\frac{1}{|\mathbb{T}^d|}\int_{\mathbb{T}^d}u(\bm{x})e^{-\imath(\bm{h}^T_u\bm{x})/L}.
$$
Next, we discretize $\mathbb{T}^d$ as 
$$
\mathbb{T}^d_N=\left\{\bm{x_j}=2\pi(Lj_1/N, \cdots, L j_d/N)\in \mathbb{T}^d: 
~0\leq j_1,\cdots, j_d\leq N-1,~ \bm{j}=(j_1,\cdots , j_d)\right\}.
$$
This indicates that we distribute $N$ points uniformly in each spatial direction of $\mathbb{T}^d$. As a result, there are a total of $E = N^d$ discrete points.

Similar to the definitions of the grid function space $S_N$ and the inner product $(\cdot , \cdot)_N$ mentioned in \Cref{sec:numerical_methods}, we can obtain the discrete Fourier coefficients of a periodic function $F(\bm{x})$ as follows
$$ 
\widetilde{F}(\bm{h}_F)=\big(F(\bm{x_j}),e^{\imath(\bm{h}^T_F\bm{x_j})/L}\big)_N=\frac{1}{E}\sum_{\bm{h}_F\in \Lambda_{p,F, N}}F(\bm{x_j})e^{-\imath(\bm{h}^T_F\bm{x_j})/L},\quad \bm{x_j}\in \mathbb{T}_N^d,
$$
where $\Lambda_{p,F, N}=\left\{\bm{h}_F :~\bm{h}_F=[L\bm{\lambda}_F],~\bm{\lambda}_F\in K_N^d\right\}$.
Then we can apply PAM to discretize $\alpha_p,u_p,f_p$, respectively,
$$
\alpha_p(\bm{x_j})=\sum_{\bm{h}_\alpha\in\Lambda_{p,\alpha, N}}\widetilde{\alpha}_p(\bm{h}_\alpha)e^{\imath(\bm{h}_\alpha^T\bm{x_j})/L},~~\widetilde{\alpha}_p(\bm{h}_\alpha)=\frac{1}{E}\sum_{\bm{h}_\alpha\in\Lambda_{p,\alpha, N}}\alpha_p(\bm{x_j})e^{-\imath(\bm{h}^T_\alpha\bm{x_j})/L}, \quad \bm{x_j}\in \mathbb{T}_N^d,
$$
$$
u_p(\bm{x_j})=\sum_{\bm{h}_u\in\Lambda_{p,u,N}}\widetilde{u}_p(\bm{h}_u)e^{\imath(\bm{h}_u^T\bm{x_j})/L},~~\widetilde{u}_p(\bm{h}_u)=\frac{1}{E}\sum_{\bm{h}_u\in\Lambda_{p,u,N}}u_p(\bm{x_j})e^{-\imath(\bm{h}^T_u\bm{x_j})/L},\quad \bm{x_j}\in \mathbb{T}_N^d,
$$
$$
f_p(\bm{x_j})=\sum_{\bm{h}_f\in\Lambda_{p,f,N}}\widetilde{f}_p(\bm{h}_f)e^{\imath(\bm{h}_f^T\bm{x_j})/L},~~\widetilde{f}_p(\bm{h}_f)=\frac{1}{E}\sum_{\bm{h}_f\in\Lambda_{p,f,N}}f_p(\bm{x_j})e^{-\imath(\bm{h}^T_f\bm{x_j})/L},\quad \bm{x_j}\in \mathbb{T}_N^d.
$$
To simplify the derivation, we introduce the notation $\widetilde{\bm{h}}_\alpha=\bm{h}_\alpha/L\in\widetilde{\Lambda}_{p,\alpha,N}$. Similarly, $\widetilde{\bm{h}}_u$ and $\widetilde{\bm{h}}_f$ are defined in the same manner.

Given a test function $v_p \in V^N := \mathrm{span}\big\{e^{\imath{\bm{\widetilde{h}}_v}^T\bm{x}}:~\bm{\widetilde{h}}_v\in\widetilde{\Lambda}_{p,u,N},~\bm{x}\in\mathbb{T}^d\big\}$, the discrete variational formulation is to seek $u_p \in V^N$,  such that 
$$
(\alpha_p\nabla u_p,\nabla v_p)_N=(f_p,v_p)_N\quad \forall v_p\in V^N.
$$
From the orthogonality of base functions, we have
\begin{equation}\nonumber
	\begin{aligned}
		(\alpha_p\nabla u_p,\nabla v_p)_N&=\bigg(\sum_{\bm{\widetilde{h}}_\alpha\in\widetilde{\Lambda}_{p,\alpha,N}}\widetilde{\alpha}_p(\bm{\widetilde{h}}_\alpha)e^{\imath\bm{\widetilde{\bm{h}}^T_\alpha}\bm{x_j}}\sum_{\bm{\bm{\widetilde{h}}_u}\in\widetilde{\Lambda}_{p,u,N}}(\imath{\widetilde{\bm{h}}^T_u})\widetilde{u}_p(\widetilde{\bm{h}}_u)e^{\imath\bm{\widetilde{h}}_u^T\bm{x_j}},(\imath\bm{\widetilde{h}}^T_v)e^{\imath\bm{\widetilde{h}}^T_v\bm{x_j}}\bigg)_N\\
		&=\bigg(\sum_{\bm{\widetilde{h}}={\bm{\widetilde{h}}_\alpha+\bm{\widetilde{h}}_u}}\sum_{\bm{\widetilde{h}}_u\in \widetilde{\Lambda}_{p,u,N}}\widetilde{\alpha}_p(\bm{\widetilde{h}}_v-\bm{\widetilde{h}}_u)\widetilde{u}_p(\bm{\widetilde{h}}_u)(\imath\bm{\widetilde{h}}^T_u)e^{\imath\bm{\widetilde{h}}^T\bm{x_j}}, (\imath\bm{\widetilde{h}}^T_v) e^{\imath\bm{\widetilde{h}}_v^T\bm{x_j}}\bigg)_N\\
		&=\sum_{\bm{\widetilde{h}}_u\in \widetilde{\Lambda}_{p,u,N}}\widetilde{\alpha}_p({\bm{\widetilde{h}}_v\overset{N}{-}\bm{\widetilde{h}}_u})\widetilde{u}_p(\bm{\widetilde{h}}_u)(\bm{\widetilde{h}}_v^T\bm{\widetilde{h}}_u),
	\end{aligned}
\end{equation}
\begin{equation}\nonumber
	(f_p,v_p)_N=\bigg(\sum_{\bm{\widetilde{h}}_f\in \widetilde{\Lambda}_{p,f,N}}\widetilde{f}_p(\bm{\widetilde{h}}_f)e^{\imath\bm{\widetilde{h}}_f^T\bm{x_j}},e^{\imath\bm{\widetilde{h} }_v^T\bm{x_j}}\bigg)_N=\widetilde{f}_p(\bm{\widetilde{h}}_v),
\end{equation}
that is
$$
\sum_{\bm{\widetilde{h}}_u\in \widetilde{\Lambda}_{p,u,N}}\widetilde{\alpha}_p(\bm{\widetilde{h}}_v\overset{N}{-}\bm{\widetilde{h}}_u)\widetilde{u}_p(\bm{\widetilde{h}}_u)(\bm{\widetilde{h}}_v^T\bm{\widetilde{h}}_u)=\widetilde{f}_p(\bm{\widetilde{h}}_v).
$$

Similar to the quasiperiodic case, we can employ tensor-vector-index conversion to generate the linear system. Define
$$
A=(A_{ij})\in\mathbb{C}^{E\times E},\quad A_{ij}= \widetilde{\alpha}_p(\bm{\widetilde{h}}_v\overset{N}{-}\bm{\widetilde{h}}_u),
$$
$$
W=(W_{ij})\in\mathbb{C}^{E\times E},\quad W_{ij}=\bm{\widetilde{h}}_v^T\bm{\widetilde{h}}_u,
$$
where indices $i, j$ are determined by
\begin{equation}\label{eqn_appendix:ij}
    \bm{h}_{v}\xrightarrow{\mathcal{C}} i, \quad \bm{h}_{u}\xrightarrow{\mathcal{C}} j,
\end{equation}
respectively. The column vectors $\bm{U}$ and $\bm{F}$ are defined by
$$
\bm{U}=(U_j)\in\mathbb{C}^{E},\quad U_j=\widetilde{u}_p(\bm{\widetilde{h}}_u),
$$
$$
\bm{F}=(F_i)\in\mathbb{C}^{E},\quad F_i=\widetilde{f}_p(\bm{\widetilde{h}}_v).
$$
Finally, we obtain the following linear system for PAM
$$
Q\bm{U}=\bm{F},\quad Q=A\circ W\in\mathbb{C}^{E\times E}.
$$

\section{Solving corrector equation by the PM}
\label{sec_appendix:solve_corrector_eqn}
In this section , we are concerned about the two-dimensional  quasiperiodic corrector equation 
\begin{align}\label{eqn_appendix:corrector}
    -\mathrm{div}(A_{\mathrm{q-per}}(\bm{x})(\bm{p}+\nabla u_{\bm{p}}(\bm{x}))) = 0,
\end{align}
where $\bm{p}=\bm{e}_i, i=1,2$. We apply PM to obtain $u_{\bm{e}_i}$ and calculate the homogenized coefficient  $A^*$ using the formula 
\begin{equation}\label{eqn_appendix:homcoefficient}
    A^*_{ij}=\mathcal{M}\left\{\bm{e}_i^{T}A_{\mathrm{q-per}}(\bm{x})(\bm{e}_j+\nabla u_{\bm{e}_j}(\bm{x}))\right\}.
\end{equation}

We assume that 
$$
 A_{\mathrm{q-per}}(\bm{x})=\left(
 \begin{array}{cc}
 	\alpha(\bm{x}) & 0\\
 	0& \beta(\bm{x})
 \end{array}
 \right),
$$
where $\alpha(\bm{x}), \beta(\bm{x})$ have the same projection matrix $\bm{P} = (\bm{P}_1, \bm{P}_2)^T \in\mathbb{P}^{2\times n}$. \eqref{eqn_appendix:corrector} can be split into two separate equations 
\begin{align}\label{eqn_appendix:elliptic1}
	 -\mathrm{div}(A_{{\mathrm{q-per}}}(\bm{x})\nabla u_{\bm{e}_1}(\bm{x}))= f_1(\bm{x}),
\end{align}

\begin{align}\label{eqn_appendix:elliptic2}
	-\mathrm{div}(A_{{\mathrm{q-per}}}(\bm{x})\nabla u_{\bm{e}_2}(\bm{x}))= f_2(\bm{x}),
\end{align}
where 
$$
 f_1(\bm{x}) = \partial_{x_1}\alpha(\bm{x}), 
\quad f_2(\bm{x}) = \partial_{x_2}\beta(\bm{x}).
 $$
We only derive the discrete scheme for  \eqref{eqn_appendix:elliptic1}, while the scheme for  \eqref{eqn_appendix:elliptic2} follows a similar approach.

Applying PM to discretize \eqref{eqn_appendix:elliptic1}, we have
$$
\alpha(\bm{x_j})=\sum_{\bm{{k}_\alpha}\in K^n_N}\widetilde{\alpha}(\bm{k}_\alpha)e^{\imath(\bm{Pk}_\alpha)^T\bm{x_j}}\quad \bm{x_j}\in X_{\bm{P}},	
$$	
$$
\beta(\bm{x_j})=\sum_{\bm{\bm{k}_\beta}\in K^n_N}\widetilde{\alpha}(\bm{k}_\beta)e^{\imath(\bm{Pk}_\beta)^T\bm{x_j}},\quad \bm{x_j}\in X_{\bm{P}},	
$$	
$$
u_{\bm{e}_1}(\bm{x_j})=\sum_{\bm{\bm{k}_{u_{\bm{e}_1}}}\in K^n_N}\widetilde{u}_{\bm{e}_1}(\bm{k}_{u_{\bm{e}_1}})e^{\imath(\bm{Pk}_{u_{\bm{e}_1}})^T\bm{x_j}},\quad \bm{x_j}\in X_{\bm{P}},
$$
$$
f_1(\bm{x_j})=\sum_{{\bm{k}_{f_1}}\in K^n_N}\widetilde{f_1}(\bm{k}_{f_1})e^{\imath(\bm{Pk}_{f_1})^T\bm{x_j}}, \quad \bm{x_j}\in X_{\bm{P}}.
$$

Given an arbitrary test function $v\in \overline{V}^N$, the discrete variational formulation is to seek  $u_{\bm{e}_1}\in \overline{V}^N$  such that
$$
(A_{{\mathrm{q-per}}}\nabla u_{\bm{e}_1},\nabla v)_N = (f_1 ,v)_N.
$$
In consideration of the orthogonality of base functions, we have
\begin{align*}
	& (A_{{\mathrm{q-per}}}\nabla u_{\bm{e}_{1}},\nabla v)_N
   \\
   &=
	\left(
	\begin{pmatrix}
		\sum\limits_{\bm{k}_{\alpha}\in K^n_N}\widetilde{\alpha}(\bm{k}_{\alpha})e^{\imath(\bm{Pk}_{\alpha})^{T}\bm{x_j}}&0\\
		0 & \sum\limits_{\bm{k}_{\beta}\in K^n_N}\widetilde{\beta}(\bm{k}_{\beta})e^{\imath(\bm{Pk}_{\beta})^{{T}}\bm{x_j}}
	\end{pmatrix}
	\begin{pmatrix}
		\sum\limits_{\bm{k}_{u_{\bm{e}_1}}\in K^n_N}\widetilde{u}_{\bm{e}_{1}}(\bm{k}_{u_{\bm{e}_1}})(\imath\bm{P}_{1}\bm{k}_{u_{\bm{e}_1}})e^{\imath(\bm{Pk}_{u_{\bm{e}_1}})^{{T}}\bm{x_j}}\\
		\sum\limits_{\bm{k}_{u_{\bm{e}_1}}\in K^n_N}\widetilde{u}_{\bm{e}_{1}}(\bm{k}_{u_{\bm{e}_1}})(\imath\bm{P}_{2}\bm{k}_{u_{\bm{e}_1}})e^{\imath(\bm{Pk}_{u_{\bm{e}_1}})^{{T}}\bm{x_j}}
	\end{pmatrix}
	\right.,\\
	&\left.
	\begin{pmatrix}
		(\imath\bm{P}_{1}\bm{v})e^{\imath(\bm{Pk}_v)^{{T}}\bm{x_j}}\\
		(\imath\bm{P}_{2}\bm{v})e^{\imath(\bm{Pk}_v)^{{T}}\bm{x_j}}
	\end{pmatrix}
	\right)_{N}\\
	=&\left(
	\begin{pmatrix}
		\sum\limits_{\bm{k}_{\alpha}\in K^n_N}\widetilde{\alpha}(\bm{k}_{\alpha})e^{\imath(\bm{Pk}_{\alpha})^{{T}}\bm{x_j}}\sum\limits_{\bm{k}_{u_{\bm{e}_1}}\in K^n_N}\widetilde{u}_{\bm{e}_{1}}(\bm{k}_{u_{\bm{e}_1}})(\imath\bm{P}_{1}\bm{k}_{u_{\bm{e}_1}})e^{\imath(\bm{P}\bm{k}_{u_{\bm{e}_1}})^{{T}}\bm{x_j}}\\
		\sum\limits_{\bm{k}_{\beta}\in K^n_N}\widetilde{\beta}(\bm{k}_{\beta})e^{\imath(\bm{Pk}_{\beta})^{{T}}\bm{x_j}}\sum\limits_{\bm{k}_{u_{\bm{e}_1}}\in K^n_N}\widetilde{u}_{\bm{e}_{1}}(\bm{k}_{u_{\bm{e}_1}})(\imath\bm{P}_{2}\bm{k}_{u_{\bm{e}_1}})e^{\imath(\bm{P}\bm{k}_{u_{\bm{e}_1}})^{{T}}\bm{x_j}}
	\end{pmatrix}
    ,
	\begin{pmatrix}
		(\imath\bm{P}_{1}\bm{k}_v)e^{\imath(\bm{Pk}_v)^{{T}}\bm{x_j}} \\
		(\imath\bm{P}_{2}\bm{k}_v)e^{\imath(\bm{Pk}_v)^{{T}}\bm{x_j}}
	\end{pmatrix}
	\right)_{N}\\
	&=\sum\limits_{\bm{k}_{u_{\bm{e}_1}}\in K^n_N}\widetilde{\alpha}(\bm{k}_v\overset{N}{-}\bm{k}_{u_{\bm{e}_1}})\widetilde{u}_{\bm{e}_{1}}(\bm{k}_{u_{\bm{e}_1}})(\bm{P}_{1}\bm{k}_v)^{{T}}(\bm{P}_{1}\bm{k}_{u_{\bm{e}_1}})+\sum\limits_{\bm{k}_{u_{\bm{e}_1}}\in K^n_N}\widetilde{\beta}(\bm{k}_v\overset{N}{-}\bm{k}_{u_{\bm{e}_1}})\widetilde{u}_{\bm{e}_{1}}(\bm{k}_{u_{\bm{e}_1}})(\bm{P}_{2}\bm{k}_v)^{{T}}(\bm{P}_{2}\bm{k}_{u_{\bm{e}_1}}),
\end{align*}

\begin{equation}\nonumber
	(f_1,v)_N=\bigg(\sum_{\bm{k}_{f_1}\in K^n_N}\widetilde{f}(\bm{k}_{f_1})e^{\imath(\bm{Pk }_{f_1})^T\bm{x_j}},e^{\imath(\bm{Pk }_v)^T\bm{x_j}}\bigg)_N=\widetilde{f_1}(\bm{k}_v),
\end{equation} 
that is
\begin{align*}
    \sum\limits_{\bm{k}_{u_{\bm{e}_1}}\in K^n_N}\widetilde{\alpha}(\bm{k}_v\overset{N}{-}\bm{k}_{u_{\bm{e}_1}})&\widetilde{u}_{\bm{e}_{1}}(\bm{k}_{u_{\bm{e}_1}})(\bm{P}_{1}\bm{k}_v)^{{T}}(\bm{P}_{1}\bm{k}_{u_{\bm{e}_1}})
	+\\
    &\sum\limits_{\bm{k}_{u_{\bm{e}_1}}\in K^n_N}\widetilde{\beta}(\bm{k}_v\overset{N}{-}\bm{k}_{u_{\bm{e}_1}})\widetilde{u}_{\bm{e}_{1}}(\bm{k}_{u_{\bm{e}_1}})(\bm{P}_{2}\bm{k}_v)^{{T}}(\bm{P}_{2}\bm{k}_{u_{\bm{e}_1}})
	=\widetilde{f_1}(\bm{k}_v).
\end{align*}
Define
$$
A^1=(A^1_{ij})\in\mathbb{C}^{D\times D},\quad A^1_{ij}= \widetilde{\alpha}_{\bm{k}_v\overset{N}{-}\bm{k}_{u_{\bm{e}_1}}},
$$
$$
A^2=(A^2_{ij})\in\mathbb{C}^{D\times D},\quad A^2_{ij}= \widetilde{\beta}_{\bm{k}_v\overset{N}{-}\bm{k}_{u_{\bm{e}_1}}},
$$
$$
W^1=(W^1_{ij})\in\mathbb{C}^{D\times D},\quad W^1_{ij}= (\bm{P}_1\bm{{k}}_v)^T(\bm{P}_1\bm{{k}}_{u_{\bm{e}_1}}),
$$
$$
W^2=(W^2_{ij})\in\mathbb{C}^{D\times D},\quad W^2_{ij}= (\bm{P}_2\bm{{k}}_v)^T(\bm{P}_2\bm{{k}}_{u_{\bm{e}_1}}),
$$
where indices $i, j$ are determined by
\begin{equation}
    \bm{k}_{v}\xrightarrow{\mathcal{C}}i, \quad \bm{k}_{u_{\bm{e}_1}}\xrightarrow{\mathcal{C}} j,
\end{equation}
respectively. The column vectors $\bm{U}$ and $\bm{F}$ are defined, respectively, by
$$
\bm{U}=(U_j)\in\mathbb{C}^{D},\quad U_j=\widetilde{u} _{\bm{e}_1}(\bm{k}_{u_{\bm{e}_1}}),
$$
$$
\bm{F}=(F_i)\in\mathbb{C}^{D},\quad F_i=\widetilde{f}_1(\bm{k}_v).
$$
Consequently, we obtain the following  linear system 
$$
Q\bm{U}=\bm{F},\quad Q=A^1\circ W^1+A^2\circ W^2\in\mathbb{C}^{D\times D}.
$$

To derive the discrete scheme of  \eqref{eqn_appendix:elliptic2}, we replace $f_1$ to $f_2$ and obtain the linear system in the same way. The homogenized coefficients are then obtained by clarifying the formula \eqref{eqn:homcoefficient} 
\begin{align}\label{eqn_appendix:homcoefficient11}
    A^*_{11}=\mathcal{M}\left\{\alpha(\bm{x})\nabla_{x_1} u_{\bm{e}_1}(\bm{x})+\alpha(\bm{x})\right\},
\end{align}
\begin{align}\label{eqn_appendix:homcoefficient12}
    A^*_{12}=\mathcal{M}\left\{\alpha(\bm{x})\nabla_{x_1} u_{\bm{e}_2}(\bm{x})\right\},
\end{align}
\begin{align}\label{eqn_appendix:homcoefficient21}
    A^*_{21}=\mathcal{M}\left\{\beta(\bm{x})\nabla_{x_2} u_{\bm{e}_1}(\bm{x}))\right\},
\end{align}
\begin{align}\label{eqn_appendix:homcoefficient22}
    A^*_{22}=\mathcal{M}\left\{\beta(\bm{x})\nabla_{x_2} u_{\bm{e}_2}(\bm{x})+\beta(\bm{x})\right\}.
\end{align}

\section{Solving corrector equation by the FM}
\label{sec_appendix:solve_corrector_eqn_filter}

In this section, we consider the filtered quasiperiodic corrector problem:
\begin{equation}\label{eqn_appendix:filter_problem}
\left\{
\begin{aligned}
&-\nabla \cdot \left[ \varphi_L(\bm{x}) \left( A_{\mathrm{q\text{-}per}}(\bm{x})(\bm{p} + \nabla u_{\bm{p}}) + \bm{\lambda} \right) \right] = 0, && \bm{x} \in Q_L, \\
&\int_{Q_L} \varphi_L(\bm{x}) \nabla u_{\bm{p}}(\bm{x}) \, d\bm{x} = 0,
\end{aligned}
\right.
\end{equation}
where \( Q_L = LQ \), with $Q$  denoting the unit cell, and \( \bm{p} = \bm{e}_i \in \mathbb{R}^2 \), \( i = 1, 2 \). The vector \( \bm{\lambda} \in \mathbb{R}^2 \) is a Lagrange multiplier enforcing the constraint in the second line of \eqref{eqn_appendix:filter_problem}. The quasiperiodic coefficient matrix is given by
\begin{equation}
A_{\mathrm{q\text{-}per}}(\bm{x}) = \begin{pmatrix}
\alpha(\bm{x}) & 0 \\
0 & \beta(\bm{x})
\end{pmatrix}, \quad \bm{x} \in \mathbb{R}^2.
\end{equation}

To solve the filtered problem \eqref{eqn_appendix:filter_problem} and approximate the homogenized coefficients associated with \( A_{\mathrm{q\text{-}per}} \), we apply the filtering method introduced in~\cite{blanc2010improving}. The filtering function \( \varphi_L \colon Q_L \rightarrow \mathbb{R} \) is a scaled, compactly supported \( C^2 \) function defined as
\begin{equation}
\varphi_L(\bm{x}) = \frac{1}{L^2} \varphi\left( \frac{\bm{x}}{L} \right), \quad \varphi(\bm{x}) = \prod_{i=1}^2 \varphi_0(x_i),
\end{equation}
where  \( \varphi_0 \colon \mathbb{R} \rightarrow \mathbb{R} \) is given by
\begin{equation}
\varphi_0(t) =
\begin{cases}
t^2, & 0 < t < \frac{1}{3}, \\
- \frac{1}{3} + 2t - 2t^2, & \frac{1}{3} \leq t < \frac{2}{3}, \\
(t - 1)^2, & \frac{2}{3} \leq t < 1, \\
0, & \text{otherwise}.
\end{cases}
\end{equation}

The weak formulation of~\eqref{eqn_appendix:filter_problem} reads: find \( (u_{\bm{p}}, \bm{\lambda}) \in (H_L^1/\mathbb{R}) \times \mathbb{R}^2 \) such that
\begin{equation}
\begin{aligned}
&\int_{Q_L} \varphi_L(\bm{x}) \nabla v(\bm{x})^T A_{\mathrm{q\text{-}per}}(\bm{x}) \nabla u_{\bm{p}}(\bm{x}) \, d\bm{x} 
+ \int_{Q_L} \varphi_L(\bm{x}) \bm{\lambda}^T \nabla v(\bm{x}) \, d\bm{x} \\
&\qquad - \int_{Q_L} \varphi_L(\bm{x}) \bm{\mu}^T \nabla u_{\bm{p}}(\bm{x}) \, d\bm{x}
= - \int_{Q_L} \varphi_L(\bm{x}) \nabla v(\bm{x})^T A_{\mathrm{q\text{-}per}}(\bm{x}) \bm{p} \, d\bm{x},
\end{aligned}
\end{equation}
for all \( (v, \bm{\mu}) \in (H_L^1/\mathbb{R}) \times \mathbb{R}^2 \). We refer to \cite[Definition~2.6]{blanc2010improving} for the definition of the space \( H_L^1/\mathbb{R} \). Boundary integrals vanish due to $\varphi_L|_{\partial Q_L} = 0$.

We discretize the weak problem using linear (P1) finite elements. The domain \( Q_L \) is partitioned into a uniform triangular mesh with mesh size \( h = L/N \), resulting in \( N^2 \) nodes. Let \( \{ \phi_i \}_{i=1}^{N^2} \) denote the standard nodal basis functions. The discrete stiffness matrix \( K \in \mathbb{R}^{N^2 \times N^2} \), constraint matrix \( G \in \mathbb{R}^{N^2 \times 2} \), and load vector \( \bm{F} \in \mathbb{R}^{N^2} \) are defined by:
\begin{align*}
K_{ij} &= \int_{Q_L} \varphi_L(\bm{x}) \, \nabla \phi_i(\bm{x})^T A_{\mathrm{q\text{-}per}}(\bm{x}) \, \nabla \phi_j(\bm{x}) \, d\bm{x}, \quad i,j = 1, \dots, N^2, \\
G_{jk} &= \int_{Q_L} \varphi_L(\bm{x}) \, \frac{\partial \phi_j}{\partial x_k}(\bm{x}) \, d\bm{x}, \quad j = 1, \dots, N^2,\quad k = 1,2, \\
F_i &= -\int_{Q_L} \varphi_L(\bm{x}) \, \nabla \phi_i(\bm{x})^T A_{\mathrm{q\text{-}per}}(\bm{x}) \bm{p} \, d\bm{x}, \quad i = 1, \dots, N^2.
\end{align*}
The linear system then reads:
\begin{equation}
\begin{pmatrix}
K & G \\
G^T & 0
\end{pmatrix}
\begin{pmatrix}
\bm{u}_{\bm{p},h} \\
\bm{\lambda}
\end{pmatrix}
=
\begin{pmatrix}
\bm{F} \\
\bm{0}
\end{pmatrix},
\end{equation}
where \( \bm{u}_{\bm{p},h} \in \mathbb{R}^{N^2} \) represents the vector of finite element coefficients approximating \( u_{\bm{p}} \). The filtered approximation of the homogenized coefficient \( A^* \) is then computed by
\begin{equation}
A^*_{ij} = \int_{Q_L} \bm{e}_i^T A_{\mathrm{q\text{-}per}}(\bm{x}) \left( \bm{e}_j + \nabla u_{\bm{e}_j}(\bm{x}) \right) \varphi_L(\bm{x}) \, d\bm{x}, \quad i,j = 1,2.
\end{equation}

\end{document}